\setlist[enumerate]{label=\emph{(\roman*)}}
\newtheorem{theorem}{Theorem}[section]
\newtheorem{lemma}[theorem]{Lemma}
\newtheorem{proposition}[theorem]{Proposition}
\theoremstyle{definition}
\newtheorem{remark}[theorem]{Remark}
\numberwithin{equation}{section}
\newcommand{\R}{\mathbb{R}}
\def \C {{\mathbb{C}}}
\def \del {\partial}
\begin{document}

\parindent=0pt

	\title[Cubic Dirac with large data]
	{Cubic Dirac equations with a class of large data}
	
		\author[S.~Dong]{Shijie Dong}
	\address{Southern University of Science and Technology, SUSTech International Center for Mathematics, and Department of Mathematics, 518055 Shenzhen, China.}
\email{dongsj@sustech.edu.cn, shijiedong1991@hotmail.com}
	
		\author[K.~Li]{Kuijie Li}
\address{Nankai University, School of Mathematical Sciences and LPMC, Tianjin, 300071, China.}
\email{kuijiel@nankai.edu.cn}

	\author[J. Zhao]{Jingya Zhao}
	\address{Southern University of Science and Technology, Department of Mathematics, 518055 Shenzhen, China.}
	\email{12231282@mail.sustech.edu.cn}
	\begin{abstract}
		We are interested in massless cubic Dirac equations in two and three space dimensions, known as the Soler model. The solution to this model is known as a wave function, which has the unit $L^2$ norm. We aim to show global existence and asymptotic behavior for the cubic Dirac model with a class of initial data that can be large in $L^2$. 
		
	\end{abstract}
	\maketitle
	\tableofcontents
\section{Introduction}
\subsection{Model problem}

We consider the nonlinear Dirac equation with a cubic nonlinearity in this article, a cornerstone in the realm of relativistic quantum mechanics pivotal for modeling Dirac fermion self-interaction. 
% and interaction with an external electromagnetic field. 
The equations of motion read
\begin{equation}\label{eq:Soler}
-i\gamma^\mu \del_\mu \psi = (\psi^* H \psi) F \psi,
\end{equation}
with 
\begin{align}
 H^* = H, \qquad\qquad \gamma^0 F=F^* \gamma^0. \label{eq:H-F}
 \end{align}
 We use $A^*$ to denote the conjugate transpose of a matrix $A$ and the restrictions on $H$ and $F$ in \eqref{eq:H-F} are to ensure the $L^2$ norm of the solution $\psi$ is conserved.
In the above, the unknown variable $\psi$ takes values in $\mathbb{C}^{2^{d-1}}$ in $\mathbb{R}^{d+1}$ spacetime, where $d=2, 3$. The Dirac matrices $\left\{\gamma^{0}, \cdots,\gamma^{2^{d-1}}\right\}$ are $2^{d-1}\times 2^{d-1}$ matrices in $\R^{d+1}$ satisfying 
\begin{equation}\label{eq:gamma}
\aligned
&\gamma^{\mu}\gamma^{\nu}+\gamma^{\nu}\gamma^{\mu}=-2\eta_{\mu\nu}I,
\\
&(\gamma^{\mu})^{*}=-\eta_{\mu\nu}\gamma^{\nu},
\endaligned
\end{equation}
in which $\eta$ is  the Minkowski metric with signature of $(-, +, \cdots, +)$ and $I$ is the identity matrix. 
The initial data are posed at $t=t_0$
\begin{equation}\label{eq:ID}
\psi(t_0) = \psi_0.
\end{equation}

A quintessential representative within such frameworks is the Soler model ($H=\gamma^0, F=I$),  which encapsulates fermion behavior through self-interaction.
Recall that the solution $\psi$ to a Dirac equation is a wave function of a particle, which maintains a unit $L^2$ norm, i.e., $\|\psi\|_{L^2} = 1$. This motivates us to treat initial data $\psi_0$ in \eqref{eq:ID} with bounded $L^2$ norm, say 2. 
To make our argument work, we need some smallness assumptions when $\psi_0$ is hit with derivatives.
We now discuss the admissible class of large initial data, which are supposed to have bounded $L^2$ norm while other norms might be small.
The ensuing restrictions (below $\epsilon$ is a small parameter) on the initial data, read as
\begin{equation}\label{initialcondi}
\left\{
\begin{aligned}
\|\psi_0 \| & < 2,  \\
\sum_{0\leq |I| \leq N} \| \langle x \rangle^{|I|} \nabla^{I} \psi_0 \| & < 20,\\
\sum_{0\leq |J| \leq N-1} \| \langle x \rangle^{|J|} \nabla \nabla^J \psi_0 \| & < \epsilon, 
\end{aligned}
\right.
\end{equation}
where we use $\|\cdot\|$ to denote the $L^2_{x}$ norm.
A class of non-trivial examples of the initial data are functions of the form
$$
\psi_0(x) =\Big( {\phi_\epsilon(x) + \epsilon \phi( x) \over  \| \phi_\epsilon(x) + \epsilon \phi( x) \|} , 0, \cdots, 0\Big),
$$
in which $\phi$ is any smooth function that decays sufficiently fast at spatial infinity, and $\phi_\epsilon(x) = \epsilon^{d/2} \phi(\epsilon x)$ which preserves its $L^2$ norm. This $\psi_0$ has a unit $L^2$ norm, while its weighted higher order norms are small of order $\epsilon$. It is worth mentioning that with this class of examples as initial data one cannot apply a re-scaling argument to treat the problem.
\\

Within this paper, we elucidate the large initial data set and establish  global existence, time decay, and scattering of solutions to both 2D and 3D nonlinear cubic Dirac equations. The main results are listed as follows.

\begin{theorem}[Global existence in 3D: cubic case]\label{thm:existence}
Consider the cubic Dirac model \eqref{eq:Soler}  in $\R^{3+1}$ spacetime  with $H^* = H, \gamma^0 F=F^* \gamma^0$, and let $N \geq 7$ be an integer. There exists an $\epsilon_0 > 0$, such that for all initial data satisfying the boundedness and smallness conditions
\begin{equation}\label{eq:thm-3Ddata11}
\aligned
\sum_{0\leq |I| \leq N} \| \langle x \rangle^{|I|} \nabla^{I} \psi_0 \| < 20,
\\
\sum_{0\leq |J| \leq N-1} \| \langle x \rangle^{|J|} \nabla \nabla^J \psi_0 \| < \epsilon \leq \epsilon_0,
\endaligned
\end{equation}
the Cauchy problem \eqref{eq:Soler}--\eqref{eq:ID} admits a global solution $\psi$, which decays according to
\begin{equation}\label{eq:thm-3ddecay11}
\aligned
|\psi(t, x)| &\leq C  (1+ t+|x|)^{-1} (1+ |t-|x||)^{-1/2},
\endaligned
\end{equation}
with $C$ a constant.
In addition, the solution $\psi$ scatters linearly.
\end{theorem}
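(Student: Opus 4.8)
The plan is to run a continuity (bootstrap) argument built on the Klainerman vector field method, organised around \emph{two} energy hierarchies at order $N\ge 7$. Let $\Gamma$ run over the admissible operators $\partial_\mu$, the spatial rotations, the Lorentz boosts — rotations and boosts taken in spinorial form, with the standard $\tfrac14[\gamma_\mu,\gamma_\nu]$ correction, so that $[-i\gamma^\mu\partial_\mu,\Gamma]=0$ — and the scaling $S=t\partial_t+x^i\partial_i$, for which $[-i\gamma^\mu\partial_\mu,S]=-i\gamma^\mu\partial_\mu$. Set
\[
E_N(t)=\sum_{|I|\le N}\|\Gamma^I\psi(t)\|^2,\qquad
\mathcal X_N(t)=\!\!\sum_{\substack{|I|\le N\\ \Gamma^I\ \text{contains a}\ \partial_\mu}}\!\!\|\Gamma^I\psi(t)\|^2 .
\]
The first hierarchy is only $O(1)$ — it contains the large conserved quantity $\|\psi\|_{L^2}$ — while the second is $O(\epsilon^2)$; the whole difficulty is that for \emph{large} $L^2$ data the conservation law controls nothing above $L^2$, so any smallness fed into the nonlinear estimates must come from $\mathcal X_N$. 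Because of \eqref{eq:H-F} the homogeneous flow conserves $L^2$, and commuting \eqref{eq:Soler} with $\Gamma^I$ leaves only lower-order $\Gamma^J$ (each still carrying the plain derivatives of $I$, since the $\Gamma$-commutators delete only $S$-factors) hitting the cubic term, so $\big|\tfrac{d}{dt}\|\Gamma^I\psi\|\big|\lesssim\sum_{|J|\le|I|}\|\Gamma^J[(\psi^*H\psi)F\psi]\|$.

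For the initialisation, at $t=t_0$ each $\Gamma^I\psi_0$ is, modulo cubic corrections (which are $O(\epsilon^{3/2})$ once one knows $\|\psi_0\|_{L^\infty}\lesssim\epsilon^{3/4}$ from the second line of \eqref{eq:thm-3Ddata11} and Gagliardo--Nirenberg), a sum of terms $\langle x\rangle^{b_\ast}\nabla^{b}\psi_0$ with $b\le|I|$ and with weight exponent $b_\ast$ equal to the number of rotation/boost/scaling factors in $\Gamma^I$. When $\Gamma^I$ carries a plain $\partial_\mu$ one has $b_\ast\le b-1$ (the plain derivative uses up a slot and any $\partial_t$ is traded, via \eqref{eq:Soler}, for $\nabla\psi_0$ plus a cubic term), and the second line of \eqref{eq:thm-3Ddata11} — whose weights sit exactly one below the derivative count — gives $\mathcal X_N(t_0)\lesssim\epsilon^2$; the first line gives $E_N(t_0)\lesssim1$. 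The explicit $\psi_0$ of the introduction shows both estimates are sharp, so no rescaling is available.

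For the bootstrap, assume on $[t_0,T]$ that $E_N\le 4E_N(t_0)$ and $\mathcal X_N\le 4\mathcal X_N(t_0)+\epsilon^2$. Klainerman--Sobolev gives $|\Gamma^I\psi|\lesssim\langle t+r\rangle^{-1}\langle t-r\rangle^{-1/2}E_N^{1/2}$ ($|I|\le N-3$, $r=|x|$), while commuting $\nabla^2$ inward plus Gagliardo--Nirenberg gives $\|\Gamma^I\psi\|_{L^\infty}\lesssim\|\Gamma^I\psi\|^{1/4}\|\nabla^2\Gamma^I\psi\|^{3/4}\lesssim\epsilon^{3/4}$ for the same $I$, since $\nabla^2\Gamma^I\psi$ carries a plain derivative and hence lies in $\mathcal X_N$. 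Interpolating, for every $\theta\in(0,1)$,
\[
\|\Gamma^I\psi(t)\|_{L^\infty}\ \lesssim\ \epsilon^{\,c\theta}\,\langle t+r\rangle^{-(1-\theta)}\langle t-r\rangle^{-(1-\theta)/2}\qquad(|I|\le N-3),
\]
with $c>0$ absolute. In $\Gamma^{\le N}[(\psi^*H\psi)F\psi]=\sum(\Gamma^a\psi^*H\Gamma^b\psi)F\Gamma^c\psi$ at most one index exceeds $N/2\ge N-3$: put that factor in $L^2$ (size $\lesssim E_N^{1/2}$) and the other two in $L^\infty$ by the displayed bound, using $\sup_{r\ge0}\langle t+r\rangle^{-2(1-\theta)}\langle t-r\rangle^{-(1-\theta)}\lesssim\langle t\rangle^{-2+2\theta}$, to obtain $\|\Gamma^{\le N}[(\psi^*H\psi)F\psi]\|\lesssim\epsilon^{2c\theta}\langle t\rangle^{-2+2\theta}E_N^{3/2}$. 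Choosing $\theta<\tfrac12$ makes the time weight integrable, and the resulting differential inequality for $E_N$ integrates (Gronwall/Riccati) to $E_N(t)\le 2E_N(t_0)$ for $\epsilon\le\epsilon_0$, the prefactor $\epsilon^{2c\theta}$ absorbing the large $E_N(t_0)$. For $\mathcal X_N$ the commuted source $\Gamma^{\le N}\partial[(\psi^*H\psi)F\psi]$ always places a plain derivative on one factor, hence a factor $\lesssim\epsilon$ (or $\lesssim\epsilon\langle t\rangle^{-1}$ if that factor sits in $L^\infty$), giving $\tfrac{d}{dt}\mathcal X_N\lesssim\epsilon^{2+c'\theta}\langle t\rangle^{-2+2\theta}$ and thus $\mathcal X_N(t)\le 2\mathcal X_N(t_0)+\tfrac12\epsilon^2$. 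Both bootstrap bounds improve, so they hold for all $t\ge t_0$. Local well-posedness in weighted $H^N$ together with the bound $E_N\lesssim1$ then yields the global solution; the decay \eqref{eq:thm-3ddecay11} is Klainerman--Sobolev applied to $\psi$ with $\sum_{|I|\le2}\|\Gamma^I\psi\|\lesssim E_N^{1/2}\le C$; and linear scattering follows because $\|(\psi^*H\psi)F\psi(s)\|_{H^{N-3}}\lesssim\langle s\rangle^{-2+2\theta}$ is time-integrable, so Duhamel makes $e^{-(t-t_0)\mathcal D}\psi(t)$ ($\mathcal D$ the free Dirac generator) a Cauchy family in $H^{N-3}$, with limit $\psi_+$ and $\|\psi(t)-e^{(t-t_0)\mathcal D}\psi_+\|_{H^{N-3}}\to0$.

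I expect the main obstacle to be precisely this interplay. For large data the naive energy estimate only closes as $\tfrac{d}{dt}E_N\lesssim\langle t\rangle^{-2}E_N^2$, which demands $E_N(t_0)$ small — exactly what fails — so one must harvest a genuine power of $\epsilon$ from \emph{every} nonlinear term. This makes the design of $\mathcal X_N$ the delicate point: it must be at once $O(\epsilon^2)$ at $t_0$ under \eqref{eq:thm-3Ddata11} — which works \emph{only} because that condition assigns each plain derivative one fewer weight, exactly the $\langle x\rangle$ that a rotation or boost produces — and closed under the commuted cubic nonlinearity (the plain derivative is never removed by the $\Gamma$-commutators). The technical heart is then the interpolation between the Klainerman--Sobolev decay, which has the bad $O(1)$ constant but the right $\langle t+r\rangle^{-1}$ rate, and the Gagliardo--Nirenberg smallness, which has the $\epsilon^{3/4}$ constant but no decay: trading a sliver $\theta$ of rate for a sliver $c\theta$ of $\epsilon$-power makes the cubic source integrable in time \emph{and} small-prefactored, which is what tames the Riccati mechanism for large data. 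One must also carry out the routine Dirac book-keeping — spinorial vector fields, their commutators with $-i\gamma^\mu\partial_\mu$, and the Leibniz/algebra rules compatible with \eqref{eq:H-F} — but that part is standard.
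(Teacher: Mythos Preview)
Your proposal is correct and follows the same global architecture as the paper: a bootstrap on two energy hierarchies — one $O(1)$ hierarchy $E_N$ and one $O(\epsilon)$ hierarchy $\mathcal X_N$ of vector-field strings that carry at least one plain derivative — closed via Klainerman--Sobolev decay plus an interpolation that manufactures a pointwise bound carrying both some $\epsilon$-smallness and some time decay. The initialization and the observation that the plain derivative is never destroyed by the $\Gamma$-commutators are exactly the content of the paper's Lemma~\ref{lem:ID-3d} and Proposition~\ref{prop:commu2}.

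The one genuine difference is \emph{how} you obtain the small-and-decaying pointwise bound on $\Gamma^I\psi$. The paper (Lemma~\ref{smalldecayesti3d}) works on $\partial\widehat\Gamma^I\psi$: it first gains an extra factor $\langle t-r\rangle^{-1}$ via the relation $\langle t-r\rangle|\partial u|\lesssim |\Gamma u|$ (Proposition~\ref{prop:extradecay}), interpolates the resulting bound $C_1\langle t+r\rangle^{-1}\langle t-r\rangle^{-3/2}$ against the small bound $C_1\epsilon\langle t+r\rangle^{-1}\langle t-r\rangle^{-1/2}$, and then integrates radially by the fundamental theorem of calculus to reach $|\widehat\Gamma^I\psi|\lesssim C_1\epsilon^{1/2}\langle t+r\rangle^{-1+\delta}$. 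You instead use Gagliardo--Nirenberg, $\|\Gamma^I\psi\|_{L^\infty}\lesssim\|\Gamma^I\psi\|^{1/4}\|\nabla^2\Gamma^I\psi\|^{3/4}\lesssim\epsilon^{3/4}$, and interpolate this constant (no decay) directly against Klainerman--Sobolev (decay, no smallness). Both close the 3D cubic bootstrap, since a cubic source in $\mathbb R^{3+1}$ already decays like $\langle t\rangle^{-2}$ without any null structure. The paper's route gives a sharper rate and, more importantly, is the one that generalises: the $\langle t-r\rangle$-gain and the radial FTOC argument feed into the ghost-weight machinery (Proposition~\ref{prop:DiracEE}, Lemmas~\ref{lem:D-Point-improve}--\ref{lem:decompose}) that is indispensable for the 3D quadratic and 2D cubic theorems, where your Gagliardo--Nirenberg shortcut would not suffice. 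Your route, on the other hand, is shorter and entirely self-contained for the present theorem.

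One small upgrade: the paper obtains scattering in $H^N$, not merely $H^{N-3}$. Your estimate on $\|\Gamma^{\le N}[(\psi^*H\psi)F\psi]\|$ already controls $\|(\psi^*H\psi)F\psi\|_{H^N}$ (top-order factor in $L^2$, the other two in $L^\infty$), so the improvement to $H^N$ is immediate once you state it.
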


\begin{theorem}[Global existence in 2D: cubic case]\label{thm:existence2d}
Consider the cubic Dirac model \eqref{eq:Soler} in $\R^{2+1}$ spacetime with $H^*=H$  and $F=I$, and let $N \geq 7$ be an integer. There exists an $\epsilon_0 > 0$, such that for all initial data satisfying the boundedness and smallness conditions
\begin{equation}\label{eq:thm-2Ddata}
\aligned
\sum_{0\leq |I| \leq N} \| \langle x \rangle^{|I|} \nabla^{I} \psi_0 \| < 20,
\\
\sum_{0\leq |J| \leq N-1} \| \langle x \rangle^{|J|} \nabla \nabla^J \psi_0 \| < \epsilon \leq \epsilon_0,
\endaligned
\end{equation}
the Cauchy problem \eqref{eq:Soler}--\eqref{eq:ID} admits a global solution $\psi$, which decays as
\begin{equation}\label{eq:thm-2ddecay}
\aligned
|\psi(t, x)| &\leq C (1+ t+|x|)^{-1/2} (1+ |t-|x||)^{-1/2},
\endaligned
\end{equation}
with $C$ a constant.
In addition, the solution $\psi$ scatters linearly in the case $H=\gamma^0$.
\end{theorem}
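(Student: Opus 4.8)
The plan is to prove Theorem~\ref{thm:existence2d} by the same vector-field bootstrap as in the three-dimensional case, Theorem~\ref{thm:existence}, the genuinely new difficulty being that the free Dirac flow in $\R^{2+1}$ decays only like $\langle t\rangle^{-1/2}$, so that the cubic self-interaction is borderline in the energy space and the \emph{largeness} of $\psi$ must be reconciled with this slow decay. I would split spacetime into an exterior region $\cext$ (roughly $r\gtrsim 1+t$), where the $\langle x\rangle^{|I|}$-weights in \eqref{eq:thm-2Ddata} propagate by finite speed of propagation and a Huygens-type argument to give \eqref{eq:thm-2ddecay} directly, and the interior $\cint$, where the Klainerman--Sobolev machinery operates.

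\emph{Vector fields and the commuted system.} I would use the Dirac-adapted Poincaré and scaling fields: the translations $\partial_\mu$, the rotation $\Omega=x_1\partial_2-x_2\partial_1$, the Lorentz boosts $L_j=x_j\partial_t+t\partial_j$, and the scaling $S=t\partial_t+x^k\partial_k$, each augmented by the spinorial term $\tfrac14[\gamma^i,\gamma^j]$ so that the resulting operators, collectively denoted $\Gamma$, commute with the Dirac operator $\gamma^\mu\partial_\mu$ (the scaling field only up to a scalar multiple of it). Commuting $\Gamma^I$ with $|I|\le N$ through \eqref{eq:Soler} yields a Dirac equation for $\Gamma^I\psi$ whose source is a sum of trilinear terms $(\Gamma^{I_1}\psi)^*\,H\,(\Gamma^{I_2}\psi)\,F\,\Gamma^{I_3}\psi$ with $|I_1|+|I_2|+|I_3|\le|I|$, plus lower-order spinorial commutators. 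At $t=t_0$ the hypotheses \eqref{eq:thm-2Ddata} translate, after trading $\partial_t$ for spatial derivatives via the equation (the cubic correction being harmless by the $\epsilon$-smallness), into analogous bounds for the generalized energies: $\sum_{|I|\le N}\|\Gamma^I\psi(t_0)\|$ of size $20$ and $\sum_{|I|\le N-1}\|\partial\,\Gamma^I\psi(t_0)\|$ of size $\epsilon$.

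\emph{Bootstrap, null structure, decay and scattering.} On a maximal interval of existence I would bootstrap a \emph{large} bound $\sum_{|I|\le N}\|\Gamma^I\psi(t)\|\le C_1$ and a \emph{small} bound $\sum_{|I|\le N-1}\|\partial\,\Gamma^I\psi(t)\|\le C_1\,\epsilon\,\langle t\rangle^\delta$ with $\delta$ arbitrarily small, then invoke the 2D Klainerman--Sobolev inequality $|v(t,x)|\lesssim\langle t+r\rangle^{-1/2}\langle t-r\rangle^{-1/2}\sum_{|I|\le2}\|\Gamma^Iv(t)\|$ to get $|\Gamma^I\psi|\lesssim C_1\langle t+r\rangle^{-1/2}\langle t-r\rangle^{-1/2}$ at low order, with an $\epsilon\langle t\rangle^\delta$-gain whenever a genuine derivative is present. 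The crucial algebraic input is the null structure of the Soler bilinear: for a (perturbed) massless Dirac solution, $\psi^*H\psi$ gains a power of $\langle t-r\rangle/\langle t+r\rangle$ — equivalently it is controlled by the good derivatives tangent to the light cone — and, complementarily, the zeroth-order nonlinearity is itself a derivative, $(\psi^*H\psi)F\psi=-\mathrm{i}\gamma^\mu\partial_\mu\psi$, hence small in $L^2$. Together these make the commuted source time-integrable in $L^2_x$; fed into the energy inequality $\tfrac{\d}{\d t}\|\Gamma^I\psi\|^2\lesssim\|\Gamma^I\psi\|\,\|\Gamma^I(\text{source})\|$ and a Grönwall argument, they recover the bootstrap assumptions with improved constants once $\delta$ and then $\epsilon_0$ are chosen small (depending only on $N$); this gives global existence, and \eqref{eq:thm-2ddecay} follows from the closed bounds and Klainerman--Sobolev. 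For scattering when $H=\gamma^0$, the same null structure makes $s\mapsto\|(\psi^*\gamma^0\psi)\psi(s)\|_{H^m}$ integrable on $[t_0,\infty)$ for $m$ below top order, so the Duhamel integral converges in $L^2$ and $\psi(t)$ is asymptotic to a free Dirac solution; for general Hermitian $H$ the weaker bilinear structure still closes the energy estimates and \eqref{eq:thm-2ddecay} but need not yield convergence of that integral, which is why linear scattering is asserted only in the $H=\gamma^0$ case.

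\emph{The main obstacle.} The delicate point is exactly the clash of largeness with the 2D borderline decay: a crude energy estimate for the top-order $\Gamma^N\psi$ produces a factor $\exp(C\!\int\!\|\psi^*H\psi\|_{L^\infty})$, and with $|\psi^*H\psi|\sim 20^2\langle t\rangle^{-1}$ this diverges; even the null-improved bound $20^2\langle t\rangle^{-2}$ gives a Grönwall constant of size $e^{C\cdot20^2}$, which one must keep from depending on the bootstrap constant $C_1$. Resolving this requires a careful hierarchy in which every borderline logarithm is escorted either by at least one $O(\epsilon)$ factor arising from a genuine derivative, or, at the top order, by the null gain on $\psi^*H\psi$ itself, so that all final constants, though possibly enormous, are fixed in advance and $\epsilon_0$ can then be taken small enough to close the loop.
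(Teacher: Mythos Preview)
Your outline has the right skeleton but contains two genuine gaps that would prevent it from closing.

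\textbf{Wrong source of null structure.} You assert that $\psi^*H\psi$ gains $\langle t-r\rangle/\langle t+r\rangle$ for general Hermitian $H$. This is false: the null decomposition of Lemma~\ref{lem:decompose} is specific to the $\gamma^0$ pairing, and for arbitrary $H$ there is no reason $\psi^*H\psi$ should vanish when both factors are supported on the outgoing cone. The paper does \emph{not} need null structure in $\psi^*H\psi$ at all. Instead it uses the ghost-weight energy estimate (Proposition~\ref{prop:DiracEE}), whose right-hand side is $\int|\phi^*\gamma^0 F|$; after writing $\widehat\Gamma^L\big((\psi^*H\psi)\psi\big)=\sum\Gamma^{I_2}(\psi^*H\psi)\,\widehat\Gamma^{I_1}\psi$, the $\gamma^0$ sits between $\widehat\Gamma^I\psi$ and $\widehat\Gamma^{I_1}\psi$, and it is \emph{this} pairing that is decomposed via $[\cdot]_-$. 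The scalar factor $\Gamma^{I_2}(\psi^*H\psi)$ is estimated crudely. Without the ghost weight (which you do not mention), the good $[\cdot]_-$ component has no improved spacetime integrability and the scheme stalls at $\langle s\rangle^{-1}$, which is not integrable. Your alternative observation that $(\psi^*H\psi)\psi=-i\gamma^\mu\partial_\mu\psi$ is just the equation and gives nothing once $\widehat\Gamma^I$ is applied.

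\textbf{Unresolved largeness obstacle.} You correctly flag that a naive Gr\"onwall produces $e^{C\cdot 20^2}$ dependent on the bootstrap constant, but your proposed resolution (``every borderline logarithm is escorted by an $O(\epsilon)$ derivative or a null gain'') does not work: at top order the source contains terms with \emph{no} genuine derivative on any factor, and as just noted $\psi^*H\psi$ has no null gain for general $H$. The paper's actual mechanism is Lemma~\ref{lem:smdecaypsi}: by writing $|\widehat\Gamma^I\psi|^2(t,x)\lesssim\int_{|x|}^\infty|\widehat\Gamma^I\psi|\,|\partial_r\widehat\Gamma^I\psi|\,d|y|$ and using that $\partial\widehat\Gamma^I\psi$ is $O(\epsilon)$ in $L^\infty$, one gets $|\widehat\Gamma^I\psi|\lesssim C_1\epsilon^{1/2}\langle t+r\rangle^{-1/2+\delta}$ at low order. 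Thus $\psi$ is pointwise \emph{small}, not merely bounded, and every cubic term carries a positive power of $\epsilon$; the bootstrap then closes uniformly in $t$ with no $\langle t\rangle^\delta$ growth in the assumptions. Your bootstrap with the growing factor $C_1\epsilon\langle t\rangle^\delta$ would in any case not yield the sharp decay \eqref{eq:thm-2ddecay}. Finally, the exterior/interior split via Huygens is not available here: the data are not (and by Remark~1.5 cannot be) compactly supported, so there is no exterior region where finite speed of propagation alone gives the result.
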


Furthermore, in the three dimensional space case, we can also treat the nonlinear Dirac equations with quadratic nonlinearity, which is mathematically more challenging. The equations read
\begin{equation}\label{eq:3Dquadra}
\left\{
\begin{aligned}
	-i \gamma^{\mu}\partial_{\mu} \psi & = (\psi^*\gamma^0\psi)e, \ \ \  (x,t)\in \R^{3+1} \\
	\psi|_{t=t_0} & = \psi_0(x),
\end{aligned}
\right.
\end{equation}
where $e$ is any fixed constant vector in $\R^4$; see also \cite{LiZa21b, Zh23}.

\begin{theorem}[Global existence in 3D: quadratic case]\label{thm:3Dquadratic}
Consider the quadratic Dirac model \eqref{eq:3Dquadra}  in $\R^{3+1}$ spacetime, and let $N \geq 7$ be an integer. There exists an $\epsilon_0 > 0$, such that for all initial data satisfying the boundedness and smallness conditions
\begin{equation}\label{eq:thm-3Ddata}
\aligned
\sum_{0\leq |I| \leq N} \| \langle x \rangle^{|I|} \nabla^{I} \psi_0 \| < 20,
\\
\sum_{0\leq |J| \leq N-1} \| \langle x \rangle^{|J|} \nabla \nabla^J \psi_0 \| < \epsilon \leq \epsilon_0,
\endaligned
\end{equation}
the Cauchy problem \eqref{eq:3Dquadra} admits a global solution $\psi$, which decays according to
\begin{equation}\label{eq:thm-3ddecay}
\aligned
|\psi(t, x)| &\leq C  (1+ t+|x|)^{-1} (1+ |t-|x||)^{-1/2},
\endaligned
\end{equation}
with $C$ a constant. In addition, the solution $\psi$ scatters linearly.
\end{theorem}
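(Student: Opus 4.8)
The plan is to establish Theorem~\ref{thm:3Dquadratic} by a bootstrap/continuity argument built on the vector field method, following the scheme used for the cubic problems but with one additional structural ingredient to compensate for the weaker, merely quadratic, nonlinearity. Let $Z$ range over the translations $\del_\alpha$, the rotations $\Omega_{ab}=x_a\del_b-x_b\del_a$, and the Lorentz boosts $x_a\del_t+t\del_a$, and let $\widehat Z$ denote their spinorial lifts --- the modifications by constant matrices that render $[\,-i\gamma^\mu\del_\mu\,,\,\widehat Z\,]$ again a constant-matrix multiple of $-i\gamma^\mu\del_\mu$, so that commuting with $\widehat Z^I$ introduces no uncontrolled derivatives. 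I would run a two-tier energy hierarchy: a \emph{large} tier controlling $\sum_{|I|\le N}\|\widehat Z^I\psi(t)\|$, asked only to remain bounded by a constant $C_1$ fixed in terms of the $20$ in \eqref{eq:thm-3Ddata}; and a \emph{small} tier controlling the norms that at $t=t_0$ collapse --- via the equation and the weights $\langle x\rangle^{|I|}$ in \eqref{eq:thm-3Ddata} --- to a genuine spatial derivative hitting $\psi_0$, asked to stay $\le C_2\epsilon$. The only quantity starting out $O(1)$ rather than $O(\epsilon)$ is the undifferentiated field itself.

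The steps, in order. (i) Feed the bootstrap bounds into Klainerman--Sobolev to obtain $|\psi(t,x)|\lesssim C_1\,(1+t+|x|)^{-1}(1+|t-|x||)^{-1/2}$ and $|\nabla\psi(t,x)|\lesssim C_2\epsilon\,(1+t+|x|)^{-1}(1+|t-|x||)^{-1/2}$; the first of these is \eqref{eq:thm-3ddecay} once the constants are fixed. (ii) Decompose $\psi$ in the null frame of the light cone, $\psi=\Pi_+\psi+\Pi_-\psi$; one of the two null components --- call it $\Pi_-\psi$ --- obeys a favourable transport equation along the cone, so that $r\,|\Pi_-\psi|$ is bounded by $\sum_{|I|\le1}|\widehat\Omega^I\psi|$ plus a source term, giving $\Pi_-\psi$ both an extra power of $(1+t+|x|)^{-1}$ and, by the data assumption, a factor $\epsilon$. (iii) The algebraic point: the scalar bilinear $\psi^*\gamma^0\psi$ kills the ``$++$'' and ``$--$'' parts of this decomposition, so it always pairs the dangerous mode with the good one, $\psi^*\gamma^0\psi=2\operatorname{Re}\big((\Pi_+\psi)^*\gamma^0(\Pi_-\psi)\big)$; the same holds for each bilinear appearing in $\widehat Z^I[(\psi^*\gamma^0\psi)e]$ and, after one further application of $\gamma^\nu\del_\nu$, for each term of $\Box(\widehat Z^I\psi)=-i\,\del_\nu(\psi^*\gamma^0\psi)\,\gamma^\nu e+(\text{comm.})$. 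Hence every nonlinear interaction --- including the self-interaction of the large mode --- carries at least one $\epsilon$-small, extra-decaying factor. (iv) Insert these bounds into the first-order Dirac energy identity for $\widehat Z^I\psi$ (and, at $|I|=0$, into $\tfrac{d}{dt}\|\psi\|^2$, since there is no conservation law to appeal to), together with the weighted energy estimate for $\Box$ where needed; the worst contributions are then of the schematic form $\int|\Pi_-\psi|\,|\psi|^2\,dx\lesssim \epsilon\,C_1^2\,(1+t)^{-3/2}$, integrable in time, and the commutators are harmless. Closing the inequalities for $\epsilon_0$ small, with $C_1$ fixed by the data and $C_2$ by $C_1$, yields global existence and \eqref{eq:thm-3ddecay}. (v) Scattering: the decay makes $s\mapsto\|(\psi^*\gamma^0\psi)e(s)\|_{L^2}$ integrable, so $S(-s)\psi(s)$ --- with $S$ the free Dirac propagator --- is Cauchy in $L^2$ as $s\to\infty$, and its limit is the asymptotic linear state.

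The main obstacle is the pair of features that distinguish this case from the Soler (cubic) problem: there is no conserved $L^2$ norm to pin down the large undifferentiated mode, and the quadratic nonlinearity produces source terms that decay only at the critical rate $(1+t+|x|)^{-2}(1+|t-|x||)^{-1}$ in three dimensions --- precisely where the wave Duhamel integral threatens a logarithmic loss. Both are defeated by the hidden null structure of step (iii): the scalar bilinear $\psi^*\gamma^0\psi$ is forced to contain the good component $\Pi_-\psi$, which is simultaneously $\epsilon$-small (by \eqref{eq:thm-3Ddata}) and extra-decaying (by the transport equation), upgrading the critical $(1+t)^{-2}$ to an integrable $\epsilon\,(1+t)^{-3/2}$ and leaving room to absorb the large constant $C_1$. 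The delicate bookkeeping is in step (iv) at the top order, where the large undifferentiated field multiplies an order-$N$ term: one must either extract the null cancellation there as well or allow a controlled slow growth of the highest-order energy while keeping a lower-order energy bounded (this is where the margin $N\ge7$ is used); reconciling this with the clean decay rate in \eqref{eq:thm-3ddecay} is the technical heart of the argument, and matching the weights $\langle x\rangle^{|I|}$ against $\widehat Z^I$ at the initial time is the other bookkeeping chore.
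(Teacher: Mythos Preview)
Your architecture is right --- two-tier bootstrap, null decomposition of $\psi^*\gamma^0\psi$, extra decay for the good component $[\psi]_-=\Pi_-\psi$, scattering via integrability of the source --- and matches the paper's scheme. But there is a genuine gap in step~(ii)/(iii): you assert that $\Pi_-\psi$ carries a factor $\epsilon$ ``by the data assumption'', and this is not true. The hypotheses \eqref{eq:thm-3Ddata} make $\nabla\psi_0$ small while $\psi_0$ itself is merely bounded; the algebraic projector $\Pi_-=I-(x_a/r)\gamma^0\gamma^a$ does nothing to change that, so $\Pi_-\psi_0$ is generically $O(1)$. The transport estimate you invoke (the paper's Lemma~\ref{lem:D-Point-improve}) controls $\partial_r[\psi]_-$ by $\langle t\rangle^{-1}\sum_{|I|\le1}|\widehat\Gamma^I\psi|+|F|$; both the source $|\widehat\Gamma\psi|$ and the boundary value $[\psi]_-(t,|x|=2t)$ are $O(C_1)$ under the bootstrap, so integrating gives extra decay but \emph{no} $\epsilon$. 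With only $|[\psi]_-|\lesssim C_1\langle t+r\rangle^{-1+\delta}$, the lowest-order energy inequality yields $E(\psi,t)\lesssim 1+C_1^3$, which does not close.

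The missing idea, which the paper supplies in Lemma~\ref{smalldecayesti3d}, is an interpolation-and-integrate trick that manufactures $L^\infty$ smallness of $\psi$ itself from the two tiers. From the small tier one has $|\partial\widehat\Gamma^I\psi|\lesssim C_1\epsilon\langle t+r\rangle^{-1}\langle t-r\rangle^{-1/2}$; from the large tier together with the gain $\langle t-r\rangle^{-1}|\partial u|\lesssim|\Gamma u|$ one has $|\partial\widehat\Gamma^I\psi|\lesssim C_1\langle t+r\rangle^{-1}\langle t-r\rangle^{-3/2}$. Interpolating these gives $|\partial\widehat\Gamma^I\psi|\lesssim C_1\epsilon^{1/2}\langle t+r\rangle^{-1+\delta}\langle t-r\rangle^{-1-\delta}$, and integrating in $r$ via the fundamental theorem of calculus yields $|\widehat\Gamma^I\psi|\lesssim C_1\epsilon^{1/2}\langle t+r\rangle^{-1+\delta}$ for $|I|\le N-3$. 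Only after this does Lemma~\ref{lem:refineghost} upgrade $[\psi]_-$ to $C_1\epsilon^{1/4}\langle t+r\rangle^{-5/4+\delta/2}$. The paper then closes the small tier not with a bare $(1+t)^{-3/2}$ bound as you sketch, but by combining the ghost-weight spacetime norm $\int\|[\widehat\Gamma^I\psi]_-/\langle s-r\rangle^{1/2+\delta}\|^2\,ds$ with H\"older/Hardy interpolations (see \eqref{ineq:weightl2}--\eqref{ineq:esti5}) to squeeze out fractional powers $\epsilon^{1/8},\epsilon^{1/4}$; your schematic estimate oversimplifies this step. Finally, note that the paper works entirely with the first-order Dirac ghost-weight energy of Proposition~\ref{prop:DiracEE} and never passes to $\Box$; your detour through the wave equation is not needed.
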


\begin{remark}
There exist many large data results for wave-type equations, among which we only mention here those that are related to the short pulse data introduced by Christodoulou in \cite{Christodoulou09}. For this class of large data, global existence has been built for several wave-type equations; see for instance \cite{MiPeYu} and the references therein. We want to mention that the short pulse data do not include our large data \eqref{eq:thm-3Ddata11} or \eqref{eq:thm-2Ddata} as a subset; see Section \ref{subsec:dis} for a comparison.
\end{remark}

\begin{remark}
Compared with the  results in \cite{DoLi22,Zh23}, where the initial data is assumed to be compactly supported and small, the initial data in our work can not have a compact support of small size. Otherwise, we have smallness in $L^2$ by Poincar\'e inequality.  In this article, we address both 2D and 3D nonlinear  Dirac equations, and show the global existence, optimal time decay and scattering of solution evolving from a class of large initial data.  
\end{remark}

%\begin{remark}
%In the smallness assumptions on the initial data \eqref{eq:thm-3Ddata} and \eqref{eq:thm-2Ddata}, we assume terms like $\| \langle x\rangle \nabla^3 \psi_0\|$, instead of $\|\langle x\rangle \nabla^2 \psi_0\|$, to be small. The main reason is that the analysis here might be used to treat more general data of the form $\psi_0 = (\epsilon^{-1} \phi_\epsilon(x) + \epsilon \phi(x), 0, \cdots, 0)$ (with $\phi$ a smooth function decaying sufficiently fast in $|x|$) where $\|\langle x\rangle \nabla^2 \psi_0\|$ is not small while $\| \langle x\rangle \nabla^3 \psi_0\|$ is small. This class of data is beyond the interest of the present paper, so we do not intend to discuss further.
%\end{remark}

We have a neat form for Theorems \ref{thm:existence}-\ref{thm:existence2d}, which can be regarded as a consequence of these two theorems.
\begin{theorem}\label{thm:neat}
Consider the cubic Dirac model \eqref{eq:Soler} in $\R^{d+1}$ spacetime with $H^*=H, F= I$, and let  $d=2, 3$, and $N \geq 9$ be an integer. There exists an $\epsilon_0 > 0$, such that for all initial data satisfying the boundedness and smallness conditions
\begin{equation}\label{eq:thm-ID-23}
\aligned
\sum_{0\leq |I| \leq N} \| \langle x \rangle^{|I|} \nabla^{I} \psi_0 \| < 20,
\\
\|  \nabla^N \psi_0 \| < \epsilon \leq \epsilon_0,
\endaligned
\end{equation}
the Cauchy problem \eqref{eq:Soler}--\eqref{eq:ID} admits a global solution $\psi$, which scatters linearly if  $H = \gamma^0$.
\end{theorem}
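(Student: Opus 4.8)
The strategy is to derive the hypotheses of Theorem \ref{thm:existence} (when $d=3$) and Theorem \ref{thm:existence2d} (when $d=2$) from \eqref{eq:thm-ID-23}, at the cost of a harmless loss in the small parameter, and then to invoke those theorems directly. Set $M := N-2$, so that $M \geq 7$ because $N \geq 9$. The boundedness condition $\sum_{0\leq|I|\leq M}\|\langle x\rangle^{|I|}\nabla^I\psi_0\|<20$ is inherited from \eqref{eq:thm-ID-23} since $M<N$. The structural restrictions also match: $F=I$ obeys $\gamma^0 F = \gamma^0 = F^*\gamma^0$, so Theorem \ref{thm:existence} applies in $d=3$, while $F=I$ together with $H^*=H$ is precisely the setting of Theorem \ref{thm:existence2d} in $d=2$. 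Thus the only content of the proof is to show that \eqref{eq:thm-ID-23} forces
\begin{equation*}
\sum_{0\leq |J|\leq M-1}\big\|\langle x\rangle^{|J|}\nabla\nabla^J\psi_0\big\| \;\leq\; \widetilde{\epsilon},
\end{equation*}
with $\widetilde{\epsilon}=\widetilde{\epsilon}(\epsilon,N)\to 0$ as $\epsilon\to 0$.

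The key point is that in the sum above the weight exponent $|J|$ is exactly one less than the order $|J|+1$ of the differentiation, which lets us trade one power of the weight for a pure-derivative norm. Writing $k=|J|+1\in\{1,\dots,M\}$, Hölder's inequality in $L^2_x$ together with $\langle x\rangle^{k-1}=(\langle x\rangle^{k})^{1-1/k}$ gives
\begin{equation*}
\big\|\langle x\rangle^{k-1}\nabla^k\psi_0\big\| \;\leq\; \big\|\langle x\rangle^{k}\nabla^k\psi_0\big\|^{\,1-1/k}\,\big\|\nabla^k\psi_0\big\|^{\,1/k},
\end{equation*}
and the standard $L^2$-based Gagliardo--Nirenberg inequality (proved by Hölder on the Fourier side) gives, for $0\leq k\leq N$,
\begin{equation*}
\big\|\nabla^k\psi_0\big\| \;\leq\; C\,\|\psi_0\|^{\,1-k/N}\,\big\|\nabla^N\psi_0\big\|^{\,k/N}.
\end{equation*}
Now \eqref{eq:thm-ID-23} yields $\|\psi_0\|<20$, $\|\langle x\rangle^{k}\nabla^k\psi_0\|\lesssim 1$ for $k\leq N$, and $\|\nabla^N\psi_0\|<\epsilon$; inserting these into the two displays gives, for every $1\leq k\leq M=N-2$,
\begin{equation*}
\big\|\langle x\rangle^{k-1}\nabla^k\psi_0\big\| \;\lesssim_N\; \epsilon^{1/N},
\end{equation*}
and summing over the finitely many multi-indices $J$ with $|J|\leq M-1$ yields $\widetilde{\epsilon}\lesssim_N \epsilon^{1/N}$.

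It then suffices to fix $\epsilon_0=\epsilon_0(N)$ small enough that $\widetilde{\epsilon}$ lies below the smallness threshold in Theorem \ref{thm:existence} (resp. Theorem \ref{thm:existence2d}) for the parameter $M=N-2$. Applying the relevant theorem produces a global solution (with the decay \eqref{eq:thm-3ddecay11}, resp. \eqref{eq:thm-2ddecay}); and the scattering assertion follows since Theorem \ref{thm:existence} gives linear scattering in $d=3$ for all admissible $F$, in particular $F=I$, while Theorem \ref{thm:existence2d} gives linear scattering in $d=2$ precisely when $H=\gamma^0$, which is exactly the case in which Theorem \ref{thm:neat} asserts scattering. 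The proof is essentially bookkeeping; the only place where genuine care is needed, and the only place the structure of \eqref{eq:thm-ID-23} enters, is the index arithmetic guaranteeing the weight gap of one, since with a weight matching the derivative order no smallness could be extracted and the reduction would fail. The loss $\epsilon\mapsto\epsilon^{1/N}$ is immaterial because qualitative smallness is all that is required.
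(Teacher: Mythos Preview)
Your proof is correct and follows the same overall strategy as the paper: reduce \eqref{eq:thm-ID-23} to the hypotheses of Theorems~\ref{thm:existence}--\ref{thm:existence2d} by extracting a power of $\epsilon$ from the weighted derivative norms. The key interpolation step, however, is genuinely different. The paper (Proposition~\ref{prop:ID-small}) first obtains the pointwise smallness $\|\nabla^I\psi_0\|_{L^\infty}\lesssim\epsilon^{(1+|I|)/N}$, combines it with the Klainerman--Sobolev decay $|\nabla^I\psi_0|\lesssim\langle r\rangle^{-(|I|+1)}$, interpolates these two pointwise bounds, and only then integrates to recover the weighted $L^2$ bound. Your argument bypasses the pointwise step entirely: the single H\"older interpolation $\|\langle x\rangle^{k-1}\nabla^k\psi_0\|\leq\|\langle x\rangle^{k}\nabla^k\psi_0\|^{1-1/k}\|\nabla^k\psi_0\|^{1/k}$ does all the work in $L^2$ directly, and the exponent $1/N$ you obtain is explicit rather than an unspecified $\theta>0$. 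Your route is shorter and more transparent, and it also makes the reduction cleaner: you apply Theorems~\ref{thm:existence}--\ref{thm:existence2d} verbatim with regularity parameter $M=N-2$, whereas the paper keeps the boundedness at level $N$ and the smallness at level $N-3$ and then appeals to ``small modifications on the regularity range.'' The paper's pointwise route has the mild advantage of yielding decay information on $\psi_0$ as a byproduct, but for the purpose of proving Theorem~\ref{thm:neat} your direct interpolation is the more economical argument.
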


\begin{remark}
Compared with the  conditions on the initial data in Theorems \ref{thm:existence}-\ref{thm:existence2d}, the restrictions in \eqref{eq:thm-ID-23} seem weaker, but we will see from Proposition \ref{prop:ID-small} that they are almost equivalent. Thus the proof for Theorem \ref{thm:neat} is essentially similar to the proofs for Theorems \ref{thm:existence}-\ref{thm:existence2d}. The reason why we present  Theorems \ref{thm:existence}-\ref{thm:existence2d} first and in details is that these two theorems are easier to read and their proofs are in a neater form.
\end{remark}

\subsection{Brief discussion on related results}
The nonlinear cubic Dirac equations have been extensively studied in the past years. On the one hand, there is a large amount of research regarding the local and global existence of the solution in the low regularity setting. Noting that if $\psi$  solves equation \eqref{eq:Soler}, $\psi_{\lambda}(t,x) = \lambda^{\frac{1}{2}} \psi(\lambda t, \lambda x)$ is also a solution to \eqref{eq:Soler}, thus the cubic Dirac equation \eqref{eq:Soler} has a scale invariance regularity $s_c= \frac{d-1}{2}$, which means $\dot{H}^{s_c}$ is the critical space. The criticality is important in the sense that in general  it is believed that an equation is ill-posed for data in supercritical space $\dot{H}^s$ with $s<s_c$. In terms of the well-posedness result, Bournaveas and Candy \cite{BoCa16} established global existence and scattering for both 2D and 3D cubic Dirac equations \eqref{eq:Soler} with small initial data in critical spaces, and the data there can be large in $L^2$. For the massive cubic Dirac equations (i.e., there is a term $m \psi$ with $m>0$ added in the left hand side of \eqref{eq:Soler}), global well-posedness and scattering are proved for small initial data at the critical level;  one can refer to  the works by Bejenaru-Herr \cite{BeHe15}-\cite{BeHe16} for the 3D and the 2D cases, respectively. \\

Also, Dirac equations with other types of nonlinearity have attracted a lot of attention. For example, Tesfahun in \cite{Te20} considered a 2D massive nonlinear  Dirac equation with cubic Hartree type nonlinearity ($[V*(\bar{\psi}\psi)]\psi $) and proved global existence and scattering for the solution evolving from small data in $H^s(\R^2)$ with $s>0$; see also \cite{LeeK}. Recently, Tesfahun's global existence result was further improved by Georgiev-Shakarov \cite{Geor22} in two aspects: the first is the elimination of the smallness condition on the initial data; the other  is the unified way to treat both  2D massive and massless cases. However, the scattering of the solution is missing in this result.
%It is worth mentioning that in the same paper \cite{Geor22},   $H^{\frac 12}$ norm of Dirac solution has a double exponential growth, and compared to \cite{Te20}, there is no scattering result.  %As far as we know, this seems to be the first large data global existence result for 2D nonlinear Dirac equation.  
\\

On the other hand,  there is also some research on the nonlinear Dirac equations in the high regularity setting,  aiming at obtaining  global existence and long time behavior of the solution, including pointwise decay estimate and scattering.   Recall that  Dong and Li investigated the 2D cubic Dirac equation with a mass parameter $m\in [0, 1]$,  and proved  uniform-in-mass global existence and unified pointwise estimates for the equations with small high-regular  initial data that are compactly supported;  see \cite{DoLi22}.   This result was extended to 3D and 2D Dirac equations with quadratic nonlinearity by  Zhang  in \cite{Zh23}; see also \cite{LiZa21, KatayamaKubo}.    We remark that the smallness of the initial data  plays a key role in the proofs of the aforementioned results. Motivated by the result in \cite{Geor22}, we also want to study the cubic Dirac equation with some class of large initial data, at least in the $L^2$ level, which is conserved in the evolution of the equation.    
    
\

Last but not least, we briefly mention some large data global existence results for wave-type equations. 
In \cite{LiDuke}, Li developed an original non-local energy bootstrap strategy and settled large data global well-posedness of hedgehog solutions for the Skyrme model in $\mathbb{R}^{3+1}$ which is an energy-supercritical problem.
%Li established global existence for the Skyrme model in $\mathbb{R}^{3+1}$ with radial large data. 
In \cite{MiPeYu}, Miao-Pei-Yu treated semilinear wave equations under the null condition, and showed global existence for this system with short pulse data of Christodoulou.
There exist other important  large data results on wave-type equations, but we are not going to be exhaustive, and one refers for instance to the aforementioned literature and the references therein.

\subsection{Difficulties and strategies}

In the present paper, one of the most important part is to identify the class of large initial data that can be large in $L^2$, i.e., \eqref{eq:thm-3Ddata11}, \eqref{eq:thm-2Ddata}, and \eqref{eq:thm-ID-23}. With this settled, we next need to show global existence and asymptotic behavior for the nonlinear Dirac equations. In this part, the difficulties mainly arise from the slow decay rates of the solutions and the presence of the large data. We need to carefully balance the decay rates and the smallness of the solutions so that the estimates can be closed.
\\

Our proof to show global existence  is based on Klainerman's vector field method and a bootstrap argument. The restriction condition in \eqref{eq:thm-3Ddata11} (also \eqref{eq:thm-2Ddata}, \eqref{eq:thm-ID-23}), roughly speaking, indicates that the initial data is bounded in $L^2$ itself or when hit by the rotation vector fields (see definition in Section \ref{sec:pre}), and small if it is hit with extra derivatives. After acting vector fields on the solution, we want to show the boundedness and smallness are preserved by the Dirac equations. 
\\

The first difficulty  is to show the smallness of $\|\langle x\rangle^{|I|} \partial \partial^I \psi (t_0)\|$
    where $\partial=(\partial_t, \nabla)$ and $t_0$ is the initial time. To illustrate the idea, we take $\|\partial_t \psi\|$ as an example. Following the equation \eqref{eq:Soler}, we have 
    \begin{align}
    	\|\partial_t \psi(t_0) \| 
\lesssim
\sum_a \|\partial_a \psi(t_0)\| +  \|\psi(t_0)\|_{L^\infty}^2 \|\psi(t_0)\|.
    \end{align}
This requires us to gain some smallness from the term $\|\psi(t_0)\|_{L^\infty}$, and it turns out that this can be realized via the fundamental theorem of calculus (below $r=|x|$)
\begin{align}
	|\psi(t_0, x)| 
&\leq \int_{r}^{+\infty} |\partial_r \psi(t_0, y)| \, d|y|,
\end{align}
after showing smallness and sufficient decay rate on $|\partial_r \psi(t_0, y)|$.
\\

Another difficulty lies in closing the top order energy estimate. Let us take a look at the 3D cubic case (the 3D quadratic case is much harder, and thus in Section \ref{sec:3d} we only illustrate the proof for the quadratic case in 3D), 
\begin{align*}
	E(\widehat{\Gamma}^I \psi, t)^{\frac 12} & \lesssim 1 + \int_{t_0}^t \|\widehat{\Gamma}^I((\psi^*H\psi )F\psi )\| ds \\
	 & \lesssim 1 + \int_{t_0}^t \|\widehat{\Gamma}^I\psi\| \|\psi\|^2_{L^{\infty}} ds + \cdots
\end{align*}
Hence, we need to exploit sufficient decay with smallness for $\|\psi\|_{L^{\infty}}$. 
%For this purpose, we use the structure of the Dirac equation and obtain extra $\langle t-r \rangle$ decay, roughly speaking, 
%we have 
%\begin{align}
%	|\partial \psi(t,x)| \lesssim \langle t+r \rangle^{-1} \langle t-r \rangle ^{-\frac{3}{2}},  \ \ \textrm{for} \ \ r < 2t.
%\end{align}
By the Klainerman-Sobolev inequality (see Proposition \ref{prop:K-S}), we have 
\begin{align}
	|\partial \psi(t,x)| \lesssim \langle t+r \rangle^{-1} \langle t-r \rangle ^{-\frac{3}{2}}.
\end{align}
On the other hand, as a result of the Klainerman-Sobolev inequality, 
\begin{align}
	|\partial \psi(t,x)| \lesssim \epsilon \langle t+r \rangle^{-1} \langle t-r \rangle ^{-\frac{1}{2}}.
\end{align}
By interpolating the above two bounds, we can get suitable estimates for $\partial \psi$, which enjoy some smallness in $\epsilon$ and have sufficiently fast decay,  and which then yields smallness and sufficient decay for $\psi$ via the fundamental theorem of calculus. \\

The 2D case is more subtle compared with the 3D cubic case, since the decay rate of Dirac solution is slower, and here the structure of the nonlinearity will play an important role. We remark that Bournaveas \cite{Boura00} unveiled the hidden null structure of the nonlinear term $\psi^*\gamma^0 \psi$ by introducing a new unknown function $\Psi$ which is defined as the solution to $i\gamma^{\mu}\partial_{\mu} \Psi = \psi$. In this way, $\psi^*\gamma^0\psi$ can be written as a combination of standard null terms\footnote{We call $\partial_t f \partial_t g - \sum_a \partial_a f \partial_a g$ and $\partial_\alpha f \partial_\beta g - \partial_\beta f \partial_\alpha g$ the standard null forms of two functions $f, g$.} 
$$
\psi^*\gamma^0\psi = \left(  (\del_t \Psi)^* \del_t (\gamma^0 \Psi) - (\del_i \Psi)^*  \del^i(\gamma^0\Psi) \right) + \left( -(\del_i \Psi)^*  \del_t(\gamma^i\Psi) + (\del_t \Psi)^*  \del_i(\gamma^i\Psi)\right).
$$
In the present paper, we shall work directly with the Dirac solution $\psi$ and applying the ghost weight energy method due to Alinhac \cite{Alinhac01b},  as such the nonlinear term $\psi^*\gamma^0\psi$ can roughly be bounded by $|[\psi]_{-}|\, |\psi|$, where $[\psi]_{-}$ (see Section \ref{sec:pre}) behaves like the good unknown in wave equation, thus has better decay and spacetime integrability. This is motivated by earlier works of \cite{DongZoeDKG, DoLi22}.\\

\subsection{Further discussions}\label{subsec:dis}

\paragraph{Brief comparison with ``short pulse data".}

\

The ``short pulse data", introduced by Christodoulou in the seminal work \cite{Christodoulou09}, represent an important class of large data, which are actively studied to build blow-up or global existence results for wave-type equations. Here we make a brief comparison between the short pulse data and our large data.

\begin{table}[ht]
\vskip-.25cm
\hskip-.25cm
\centering
\begin{tabular}{|c ||c| c| c| c |}
\hline\hline {\footnotesize Data type} & {\footnotesize Pointwise} & 
{\footnotesize $\|\cdot\|$} &    {\footnotesize $\|\cdot\|_{\dot{H}^1}$} & {\footnotesize $\|\cdot\|_{\dot{H}^2}$ }    
%  & 
%{\footnotesize profile} & {\footnotesize symmetries} 
\\
\hline
Short pulse data & $small$ 
& $small$            &  $small$ & $large$ 
 \\
\hline 
Our large data   & $small$ 
&  $large$  & $small$ & $small$ 
%\\
%\hline 
%$1$   & $2$ 
%&  $3$  & $4$ & $5$ 
\\ [1ex]
\hline
\end{tabular}
\label{table:nonlin}

\caption{Short pulse data and our large data}
\end{table}

By the comparison in the table, we know our large data set is not included in the short pulse data category. We also note that different difficulties arise in studying these two classes of large data in different context.

\

\paragraph{Possible extensions.}

\

The small data global existence for wave-type equations has been investigated extensively in the past few decades. One possible extension is to study  wave-type equations (including Dirac-Klein-Gordon equations, Maxwell-Dirac equations, etc.) with the class of large data \eqref{eq:thm-ID-23} and to  show global existence and asymptotic behavior of the solutions to these models. 

The second possible extension, inspired by \cite{Christodoulou09}, is to explore whether finite time blow-up occurs for some wave-type equations. For this aspect, we expect blow-up most likely to happen in lower spacial dimensions like 2D.

Last but not least, we will consider massive wave-type equations with large data \eqref{eq:thm-ID-23} in the future. Much analysis in the present paper cannot be directly extended to the massive cases due to the presence of the mass terms (for instance one cannot get any smallness on $\|\partial_t \psi(t_0)\|$ if considering $-i\gamma^\mu \del_\mu \psi + \psi = 0$), which makes them challenging problems.

\subsection{Organization} The article is organized as follows. In Section \ref{sec:pre}, we list pertinent notation and present some preliminary estimates, including energy estimates and decay estimates. Sections \ref{sec:3d} and \ref{sec:2d} are devoted  to treating the 3D case and the 2D case of the nonlinear Dirac equations, respectively. Finally, we briefly discuss the proof for Theorem \ref{thm:neat}  in the Section \ref{sec:neat}.

\section{Preliminaries}\label{sec:pre}
\subsection{Notation and conventions}\label{subsec:Notation}

We work in $\R^{d+1}$ spacetime with Minkowski metric $\eta = \text{diag} (-1, 1, \cdots, 1)$ and $d=2,3$.
A spacetime point in $\R^{d+1}$ is denoted by $(x,t)=(x_1,\cdots,x_d, t)$ and $r=|x|$. For simplicity, $\|\cdot\|:= \|\cdot \|_{L_x^2}$, and higher order spacial norms are denoted by the conventional notation.

We recall the following vector fields, which will be frequently used throughout.
\begin{itemize}
	\smallskip
	\item  Translations: \quad $\partial_{\alpha}=\partial_{x_{\alpha}}$, \quad  for $\alpha=0,1,\cdots,d$.
	
	\smallskip
	\item  Lorentz boosts: \quad $L_{a}=x_{a}\partial_{t}+t\partial_{a}$, \quad  for $a=1,\cdots,d$.
	
	\smallskip
	\item  Rotations: \quad  $\Omega_{ab}=x_{a}\partial_{b}-x_{b}\partial_{a}$, \quad  for $1\le a<b\le d$.
	
		\smallskip
	\item   Scaling : \quad  $S=t\partial_{t}+x^{a}\partial_{a}$.
\end{itemize}
We use $\partial $ to denote a spacetime derivative while $\nabla$ is used to represent a spatial derivative.
These vector fields are compatible with wave equations. To treat Dirac equations, following Bachelot \cite{Bache}, we need to modify some of the vector fields so that they commute with the Dirac operator $-i\gamma^\mu \partial_\mu$, which read as follows
\begin{itemize}
	
	\smallskip
	\item  Modified Lorentz boosts: \quad $\widehat{L}_{a}=L_a - {1\over 2} \gamma^0 \gamma^a$, \quad  for $a=1,\cdots,d$.
	
	\smallskip
	\item  Modified rotations: \quad  $\widehat{\Omega}_{ab}=\Omega_{ab} - {1\over 2} \gamma^a \gamma^b$, \quad  for $1\le a<b\le d$.
\end{itemize}
Next, we define the ordered sets of vector fields in $\R^{3+1}$, which can also be adapted to  $\R^{2+1}$ by a slight modification. Let
\begin{align}
	\{\Gamma_i\}_{i=1}^{11} & = \{S, \partial_t, \partial_{1}, \partial_{2}, \partial_{3}, L_1,L_2,L_3, \Omega_{12},\Omega_{13}, \Omega_{23} \},  \\
	\{\widehat{\Gamma}_i\}_{i=1}^{11} & = \{S, \partial_t, \partial_{1}, \partial_{2}, \partial_{3}, \widehat{L}_1, \widehat{L}_2, \widehat{L}_3, \widehat{\Omega}_{12},\widehat{\Omega}_{13}, \widehat{\Omega}_{23} \}.
\end{align}
For any multi-index $I=(i_1, i_2, \cdots, i_{11}) \in \mathbb{N}^{11}$, $|I|= \sum_{k=1}^{11} i_k$, and for another multi-index $J$,  $|I|\leq |J|$ means each component of $I$ is less or equal to the corresponding component in $J$.  We denote 
\begin{align}
	\Gamma^I = \prod_{l=1}^{11} \Gamma_{l}^{i_{l}}, \ \  \qquad  \widehat{\Gamma}^I = \prod_{l=1}^{11} \widehat{\Gamma}_{l}^{i_{l}}.
\end{align}
Since the difference between $\widehat{\Gamma}$ and $\Gamma$ is a constant matrix, for any $L\in \mathbb{N}$ and smooth scalar or vector valued function $g$, we can see 
\begin{align}
	\sum_{|I|\leq L}|\widehat{\Gamma}^{I} g | \lesssim \sum_{|J|\leq L} |\Gamma^J g| \lesssim \sum_{|I|\leq L} |\widehat{\Gamma}^I g|.
\end{align}
Finally, we use $C$ to denote a constant which may be different from line to line. Also, by $A\lesssim B$, we  mean there is a generic constant $C>0$, such that $A \leq C B$. We apply the Japanese bracket to denote $\langle \cdot \rangle = \sqrt{1+|\cdot|^2}$. Throughout the paper, spacetime indices are represented by Greek letters while Roman letters is used to denote spacial indices. For a vector $\Psi$, we define 
$$[\Psi]_\pm = \Psi \pm {x_a\over r} \gamma^0 \gamma^a \Psi.$$
 The Einstein summation convention over repeated upper and lower indices is adopted unless otherwise specified.

\subsection{Commutator estimates} For two operators $A, B$, let $[A, B]= AB-BA$ denote the commutator of $A, B$. We list some well-known commutator  estimates which shall be frequently used in the sequel; see for instance \cite{Sogge, Bache}.

\begin{proposition} \label{prop:commu1}
	There exist some constants $c^{\alpha}$, such that the following estimates hold.
	\begin{align*}
		[\partial, \Gamma] & = c^{\alpha} \partial_{\alpha}, \ \  \qquad   [S, -i\gamma^{\mu}\partial_{\mu}] = i\gamma^{\mu}\partial_{\mu}, \\
		[\widehat{\Gamma}_{i}, -i\gamma^{\mu} \partial_{\mu}] & = 0, \ \quad \  \textrm{for}\ \  i =2, \cdots, 11.
	\end{align*}
\end{proposition}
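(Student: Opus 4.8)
The plan is to verify the three identities by direct computation, reducing everything to the Leibniz rule for the coordinate vector fields and the Clifford relations \eqref{eq:gamma}. For the first identity it suffices to treat a single pair $(\partial_\beta,\Gamma_l)$, the general multi-index statement then following by iteration, since the coefficients generated at each stage are constants. I would record the elementary brackets, valid for $\alpha,\beta\in\{0,1,\dots,d\}$ and $1\le a<b\le d$,
\begin{equation*}
[\partial_\beta,\partial_\alpha]=0,\qquad [\partial_\beta,S]=\partial_\beta,\qquad [\partial_\beta,L_a]=\delta_{\beta 0}\partial_a+\delta_{\beta a}\partial_t,\qquad [\partial_\beta,\Omega_{ab}]=\delta_{\beta a}\partial_b-\delta_{\beta b}\partial_a,
\end{equation*}
each obtained by the product rule; in every case the right-hand side is a constant-coefficient combination of the $\partial_\alpha$, which is the first claim with explicit $c^\alpha$.

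For the second identity I would use $[S,\partial_\mu]=-\partial_\mu$ (the negative of the bracket above) together with the fact that the constant matrices $\gamma^\mu$ commute with the componentwise scalar operator $S$, so that
\begin{equation*}
[S,-i\gamma^\mu\partial_\mu]=-i\gamma^\mu[S,\partial_\mu]=-i\gamma^\mu(-\partial_\mu)=i\gamma^\mu\partial_\mu.
\end{equation*}
For the third identity the translations are immediate, since $[\partial_\alpha,-i\gamma^\mu\partial_\mu]=-i\gamma^\mu[\partial_\alpha,\partial_\mu]=0$. The substance lies in the modified boosts and rotations, and the idea is to split each commutator into a differential part and a matrix part and check that they cancel. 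For $\widehat L_a=L_a-\tfrac12\gamma^0\gamma^a$, the brackets $[L_a,\partial_t]=-\partial_a$ and $[L_a,\partial_b]=-\delta_{ab}\partial_t$ give the differential part $i(\gamma^0\partial_a+\gamma^a\partial_t)$, while the Clifford relations \eqref{eq:gamma} (in particular $(\gamma^0)^2=I$, $(\gamma^a)^2=-I$, and $\gamma^0\gamma^a=-\gamma^a\gamma^0$) yield $[\gamma^0\gamma^a,\gamma^0]=-2\gamma^a$ and $[\gamma^0\gamma^a,\gamma^b]=-2\delta_{ab}\gamma^0$, so that
\begin{equation*}
-\tfrac12[\gamma^0\gamma^a,-i\gamma^\mu\partial_\mu]=\tfrac{i}{2}[\gamma^0\gamma^a,\gamma^\mu]\partial_\mu=-i(\gamma^a\partial_t+\gamma^0\partial_a),
\end{equation*}
and the two contributions cancel. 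The case $\widehat\Omega_{ab}=\Omega_{ab}-\tfrac12\gamma^a\gamma^b$ is entirely analogous: the differential part is $i(\gamma^a\partial_b-\gamma^b\partial_a)$, coming from $[\Omega_{ab},\partial_t]=0$ and $[\Omega_{ab},\partial_c]=\delta_{cb}\partial_a-\delta_{ca}\partial_b$, and the matrix part is $-i(\gamma^a\partial_b-\gamma^b\partial_a)$, coming from $[\gamma^a\gamma^b,\gamma^0]=0$ and $[\gamma^a\gamma^b,\gamma^c]=2\delta_{ac}\gamma^b-2\delta_{bc}\gamma^a$.

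There is no genuine analytic obstacle here; the identities are purely algebraic and use no estimate or decay input. The only point calling for care is the sign bookkeeping in the Clifford algebra when moving $\gamma^0$ past the spatial $\gamma^a$'s and when using $(\gamma^a)^2=-I$, together with the observation that for any constant matrix $M$ one has $[M,-i\gamma^\mu\partial_\mu]=-i[M,\gamma^\mu]\partial_\mu$, which is precisely what makes the matrix parts of $[\widehat L_a,-i\gamma^\mu\partial_\mu]$ and $[\widehat\Omega_{ab},-i\gamma^\mu\partial_\mu]$ first order and hence able to cancel the differential parts. The $d=2$ statement follows from the same computation after discarding $\partial_3$, $\widehat L_3$, $\widehat\Omega_{13}$, $\widehat\Omega_{23}$ and keeping only the single rotation $\widehat\Omega_{12}$.
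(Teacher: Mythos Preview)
Your proposal is correct and follows the same direct-computation approach as the paper, which simply notes that the identities ``can be obtained by a straightforward calculation'' and illustrates only the case $[S,-i\gamma^\mu\partial_\mu]$. In fact you supply considerably more detail than the paper does, explicitly verifying the Clifford commutators and the cancellation between the differential and matrix parts for both $\widehat L_a$ and $\widehat\Omega_{ab}$.
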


\begin{proof}
	The above formulas can be obtained by a straightforward calculation, we just compute   $[S, -i\gamma^{\mu}\partial_{\mu}]$, the others can be similarly handled. 
	\begin{align*}
		[S, -i\gamma^{\mu} \partial_{\mu}] & = [t\partial_t, -i\gamma^{\mu} \partial_{\mu}] + [x^j \partial_j, -i\gamma^{\mu} \partial_{\mu}] \\
		& =i \gamma^0 \partial_t + i\gamma^j\partial_j.
	\end{align*}
	The desired result then follows.    
\end{proof}

As a consequence, suppose $\phi$ solves  the Dirac equation
\begin{align}
	-i \gamma^{\mu} \partial_{\mu} \phi = F.
\end{align}
Then for any multi-index $I$, we have 
\begin{align} \label{eq:vectorfield}
	-i \gamma^{\mu} \partial_{\mu} \widehat{\Gamma}^I\phi =  \sum_{|J| \leq |I|} c_{J,I}\widehat{\Gamma}^J F,
\end{align}
where $c_{J,I} $ are constants. 
Considering the special form of the nonlinearities in \eqref{eq:Soler}, we recall the following structure-preserving results.
\begin{lemma}[\cite{DongZoeDKG}]\label{lem:preserve}
Let $f$ be a smooth scalar function, and $\Psi, \Phi$ be two smooth $\C^4$-valued functions in $\R^{3+1}$.
Then the following results hold:
\begin{align*}
\widehat{\Gamma}^I (f \Psi) &= \sum_{I_1+I_2=I} \Gamma^{I_1} f \widehat{\Gamma}^{I_2} \Psi,
\\
\Gamma^I (\Psi^* \gamma^0 \Phi) &= \sum_{I_1+I_2=I} \big(\widehat{\Gamma}^{I_1}\Psi\big)^*  \gamma^0 \widehat{\Gamma}^{I_2}\Phi.
\end{align*}
\end{lemma}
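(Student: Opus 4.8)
\textbf{Proof proposal for Lemma \ref{lem:preserve}.}

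The plan is to prove both identities by induction on $|I|$, reducing everything to the case of a single vector field $\widehat{\Gamma}_i$ (or $\Gamma_i$) acting on a product, after which the general multi-index statement follows by iterating and collecting terms. For the first identity, $\widehat{\Gamma}^I(f\Psi) = \sum_{I_1+I_2=I} \Gamma^{I_1} f\, \widehat{\Gamma}^{I_2}\Psi$, the base case $|I|=0$ is trivial. For $|I|=1$ I would check the Leibniz rule $\widehat{\Gamma}_i(f\Psi) = (\Gamma_i f)\Psi + f(\widehat{\Gamma}_i \Psi)$ vector field by vector field: for the derivatives $\partial_\alpha$ and the scaling $S$ this is the ordinary Leibniz rule since these act identically on scalars and spinors; for the modified boosts $\widehat{L}_a = L_a - \tfrac12\gamma^0\gamma^a$ and modified rotations $\widehat{\Omega}_{ab} = \Omega_{ab} - \tfrac12\gamma^a\gamma^b$, the differential parts $L_a,\Omega_{ab}$ obey the Leibniz rule and, crucially, the zeroth-order matrix correction annihilates the scalar $f$ — i.e. acting $\widehat{L}_a$ on $f\Psi$ produces $(L_a f)\Psi + f(L_a\Psi) - \tfrac12 f\gamma^0\gamma^a\Psi = (L_a f)\Psi + f\widehat{L}_a\Psi$, so the matrix term stays attached to $\Psi$. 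This is exactly why the scalar factors carry unhatted $\Gamma$ and the spinor factors carry hatted $\widehat{\Gamma}$. For the inductive step, write $\widehat{\Gamma}^I = \widehat{\Gamma}_i \widehat{\Gamma}^{I'}$ with $|I'| = |I|-1$, apply the inductive hypothesis to $\widehat{\Gamma}^{I'}(f\Psi)$, then apply the $|I|=1$ Leibniz rule term by term and re-index the resulting double sum; one must also use that $[\widehat{\Gamma}_i, \Gamma_j]$ and $[\widehat{\Gamma}_i,\widehat{\Gamma}_j]$ are again (finite combinations of) admissible vector fields of the same or lower order so that the sum over $I_1 + I_2 = I$ closes, which is a standard bookkeeping fact for these commutators.

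For the second identity, $\Gamma^I(\Psi^*\gamma^0\Phi) = \sum_{I_1+I_2=I} (\widehat{\Gamma}^{I_1}\Psi)^* \gamma^0 \widehat{\Gamma}^{I_2}\Phi$, the key algebraic input is the single-vector-field compatibility $\Gamma_i(\Psi^*\gamma^0\Phi) = (\widehat{\Gamma}_i\Psi)^*\gamma^0\Phi + \Psi^*\gamma^0(\widehat{\Gamma}_i\Phi)$. For $\Gamma_i = \partial_\alpha$ this is immediate since $\widehat{\partial}_\alpha = \partial_\alpha$. For $\Gamma_i = S$, both $S$ and $\widehat S = S$ agree and it is the plain Leibniz rule. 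The substance is in the boosts and rotations: for $\widehat{L}_a = L_a - \tfrac12\gamma^0\gamma^a$ one computes
\begin{align*}
(\widehat{L}_a\Psi)^*\gamma^0\Phi + \Psi^*\gamma^0(\widehat{L}_a\Phi)
&= (L_a\Psi)^*\gamma^0\Phi + \Psi^*\gamma^0(L_a\Phi) \\
&\quad - \tfrac12 (\gamma^0\gamma^a\Psi)^*\gamma^0\Phi - \tfrac12\Psi^*\gamma^0\gamma^0\gamma^a\Phi,
\end{align*}
and the two matrix correction terms cancel because $(\gamma^0\gamma^a)^*\gamma^0 = \gamma^a{}^*\gamma^0{}^*\gamma^0 = -\gamma^a\gamma^0\gamma^0 = -\gamma^0\gamma^0\gamma^a$ — more cleanly, $(\gamma^0\gamma^a)^*\gamma^0 = -\gamma^0(\gamma^0\gamma^a)$ — so that $-\tfrac12(\gamma^0\gamma^a\Psi)^*\gamma^0\Phi = +\tfrac12\Psi^*\gamma^0\gamma^0\gamma^a\Phi$, exactly canceling the other term; hence only the differential Leibniz terms $(L_a\Psi)^*\gamma^0\Phi + \Psi^*\gamma^0(L_a\Phi) = L_a(\Psi^*\gamma^0\Phi) = \Gamma_a(\Psi^*\gamma^0\Phi)$ survive. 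The same cancellation, using $(\gamma^a\gamma^b)^*\gamma^0 = -\gamma^0\gamma^a\gamma^b$ (valid for $a\neq b$ by the anticommutation relations \eqref{eq:gamma}), handles the modified rotations. With the single-vector-field identity in hand, the induction on $|I|$ proceeds exactly as before: split $\Gamma^I = \Gamma_i\Gamma^{I'}$, apply the hypothesis, then the one-step rule, and re-index, using that the relevant commutators among the $\Gamma_j$ and $\widehat{\Gamma}_j$ are lower-order admissible vector fields.

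The main obstacle — really the only non-routine point — is verifying the matrix-algebra cancellations in the second identity, i.e. that the zeroth-order corrections in $\widehat{L}_a$ and $\widehat{\Omega}_{ab}$ precisely drop out of the sesquilinear form $\Psi^*\gamma^0\Phi$. This is where the specific relation $\gamma^0 F = F^*\gamma^0$ (here with $F$ built from products of $\gamma$'s) and the Clifford relations \eqref{eq:gamma} are used, and it is exactly the reason the hatted vector fields were designed the way they were. Everything else — the base cases, the Leibniz rule for the first-order differential operators, and the combinatorial re-indexing of $\sum_{I_1+I_2=I}$ across an inductive step together with absorbing commutator terms — is bookkeeping of a standard type, so I would state those steps and carry out only the matrix computations in detail.
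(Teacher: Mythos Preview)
Your approach is correct and is the standard one; the paper itself does not prove this lemma but merely cites it from \cite{DongZoeDKG}. The single-vector-field Leibniz rules you write down and the matrix cancellations for $\widehat{L}_a$ and $\widehat{\Omega}_{ab}$ (via $(\gamma^0\gamma^a)^*\gamma^0 = -\gamma^0(\gamma^0\gamma^a)$ and $(\gamma^a\gamma^b)^*\gamma^0 = -\gamma^0\gamma^a\gamma^b$) are exactly the heart of the matter.

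One minor remark: with the paper's ordered-product convention $\widehat{\Gamma}^I = \prod_{l=1}^{11}\widehat{\Gamma}_l^{i_l}$, the inductive step does \emph{not} require any commutator closure among the $\Gamma_j$ or $\widehat{\Gamma}_j$. Iterating the one-step Leibniz rule through the ordered product directly produces terms of the form $(\Gamma^{I_1}f)(\widehat{\Gamma}^{I_2}\Psi)$ with $I_1+I_2=I$ (times multinomial coefficients, which the paper suppresses), because the factors landing on $f$ and on $\Psi$ inherit the same ordering. So your appeal to commutator structure is harmless but unnecessary here.
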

In Lemma \ref{lem:preserve} we only consider the 3D case, and its 2D analogue can be formulated in the same manner which is omitted here.
\\
The following commutator estimates will also be used, which we formulate as a proposition. 
\begin{proposition} \label{prop:commu2}
	For any smooth scalar or vector valued function $f$ and multi-index $I$,  there exists some generic constant $C>0$, such that 
	\begin{align}
		|[\partial_{\alpha}, \Gamma^I] f| & \leq C \sum _{|J|<|I|}\sum_{0 \leq \beta \leq d } |\partial_{\beta} \Gamma^J f|, \\
		|[\partial_{\alpha}, \widehat{\Gamma}^I] f| & \leq C \sum _{|J|<|I|}\sum_{0 \leq \beta \leq d } |\partial_{\beta} \widehat{\Gamma}^J f|, \\
		|[\Gamma_k, \Gamma^I] f| & \leq C \sum_{|J|\leq |I|} |\Gamma^J f|.
	\end{align}
\end{proposition}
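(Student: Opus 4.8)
The plan is to prove the three estimates together by induction on $|I|$, using Proposition~\ref{prop:commu1} for the one-vector-field commutators together with the elementary fact that the vector fields $\{S,\partial_\alpha,L_a,\Omega_{ab}\}$ span a Lie algebra: every bracket $[\Gamma_i,\Gamma_j]$ is a constant-coefficient linear combination of the $\Gamma_l$'s (for the hatted fields the same holds modulo constant matrices, the extra $-\tfrac12\gamma^0\gamma^a$, $-\tfrac12\gamma^a\gamma^b$ corrections contributing only zeroth-order matrix terms). The engine of the argument is a \emph{reordering lemma}: iterating $\Gamma_i\Gamma_j=\Gamma_j\Gamma_i+[\Gamma_i,\Gamma_j]$ and inducting on the number of factors $m$, one sees that an arbitrary product $\Gamma_{k_1}\cdots\Gamma_{k_m}$ equals $\sum_{|J|\le m}c_J\,\Gamma^J$ for constants $c_J$, and likewise for the hatted fields with constant matrices $c_J$. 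Since $\Gamma_k\Gamma^I$ and $\Gamma^I\Gamma_k$ then share the same canonical top-order ($|J|=|I|+1$) term, the third estimate $|[\Gamma_k,\Gamma^I]f|\le C\sum_{|J|\le|I|}|\Gamma^J f|$ follows at once, and likewise for the hatted analogue.

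For the first estimate I would induct on $|I|$, the case $|I|\le 1$ being Proposition~\ref{prop:commu1} directly. For the inductive step, write $\Gamma^I=\Gamma_k\Gamma^{I'}$ with $|I'|=|I|-1$ and split
\[
[\partial_\alpha,\Gamma^I]f=[\partial_\alpha,\Gamma_k]\,\Gamma^{I'}f+\Gamma_k\,[\partial_\alpha,\Gamma^{I'}]f.
\]
The first term is $c^\beta\partial_\beta\Gamma^{I'}f$ by Proposition~\ref{prop:commu1}, already of the required form since $|I'|<|I|$. In the second term the inductive hypothesis gives $[\partial_\alpha,\Gamma^{I'}]f=\sum_{|J|<|I'|}\sum_\beta d_{J,\beta}\,\partial_\beta\Gamma^J f$; applying $\Gamma_k$ and commuting it leftward past $\partial_\beta$ (which produces another $c^\gamma\partial_\gamma$ via Proposition~\ref{prop:commu1}) and past $\Gamma^J$ (producing a product of at most $1+|J|\le|I|-1$ vector fields, which the reordering lemma rewrites as canonical $\Gamma^{J'}$ with $|J'|<|I|$) leaves only terms $\partial_\beta\Gamma^{J'}f$ with $|J'|<|I|$, as desired. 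The strict inequality $|J'|<|I|$ records the fact that each commutator against $\partial_\alpha$ trades a vector field for a genuine spacetime derivative; keeping this degree count sharp is the one point that needs care. The second estimate is proved line by line in the same way, using $[\partial_\alpha,\widehat\Gamma_k]=[\partial_\alpha,\Gamma_k]=c^\beta\partial_\beta$ (the matrix correction in $\widehat\Gamma_k$ being constant) and the matrix-valued reordering lemma.

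The main obstacle here is purely bookkeeping rather than analytic: one must carefully re-express non-canonically ordered products such as $\Gamma_k\Gamma^J$ in the canonical basis $\{\Gamma^{J'}\}$ while keeping the degree estimate tight, distinguishing the strict drop $|J|<|I|$ in estimates (i)--(ii) from the non-strict $|J|\le|I|$ in estimate (iii). Once the reordering lemma is isolated as a preliminary step, the remainder is a short induction and one could, as in \cite{Sogge,Bache}, safely leave the routine details to the reader.
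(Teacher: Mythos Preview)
Your proposal is correct and follows the same inductive scheme the paper has in mind; in fact you have spelled out considerably more than the paper does, since the paper's own proof merely states that one checks the case $|I|=1$ by direct computation and then argues by induction, explicitly omitting the details. Your isolation of the reordering lemma and the careful degree bookkeeping are exactly the kind of routine verifications the paper leaves to the reader.
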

\begin{proof}
	One can prove a simple case $|I| =1$ by elementary computation, then an induction argument can give the final result, we omit the details.
\end{proof}

\subsection{Decay estimates}

First we recall the standard Klainerman-Sobolev inequality which gives pointwise decay via weighted $L^2$-type norms.

\begin{proposition}[\cite{Sogge}]\label{prop:K-S}
Let $f$ be a sufficiently regular function in $\R^{d+1}$ with $d=2, 3$. 
It holds that
	\begin{equation}
	|f(t,x)|\lesssim \langle t+r\rangle^{-\frac{d-1}{2}}\langle t-r\rangle^{-\frac{1}{2}}\sum_{|I|\le 2}\left\|\Gamma^{I}f(t,x)\right\|.
	\end{equation}
Consequently, if $\phi$ is a sufficiently regular $\C^{2^{d-1}}$-valued function in $\R^{d+1}$ with $d=2, 3$.
It holds that
	\begin{equation}
	|\phi(t,x)|\lesssim \langle t+r\rangle^{-\frac{d-1}{2}}\langle t-r\rangle^{-\frac{1}{2}}\sum_{|I|\le 2}\left\|\widehat{\Gamma}^{I}\phi(t,x)\right\|.
	\end{equation}
\end{proposition}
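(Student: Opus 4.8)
\emph{Reduction to a scalar estimate.} This is the classical Klainerman--Sobolev inequality, and I would run its standard proof adapted to the list $\{\Gamma_i\}$ of Section~\ref{subsec:Notation}. First, the spinor estimate follows from the scalar one: the unmodified fields $S,\partial_\alpha,L_a,\Omega_{ab}$ act diagonally on the components $\phi_1,\dots,\phi_{2^{d-1}}$ of $\phi$, so $\Gamma^I\phi_k=(\Gamma^I\phi)_k$, and applying the scalar estimate to each $\phi_k$ and summing over the finitely many components gives $|\phi(t,x)|\lesssim\langle t+r\rangle^{-\frac{d-1}{2}}\langle t-r\rangle^{-\frac12}\sum_{|I|\le 2}\|\Gamma^I\phi(t)\|$; the pointwise comparison $\sum_{|J|\le L}|\Gamma^Jg|\lesssim\sum_{|I|\le L}|\widehat\Gamma^Ig|$ recorded in Section~\ref{subsec:Notation} then replaces $\Gamma$ by $\widehat\Gamma$ on the right. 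Since $t\mapsto-t$ permutes $\{\Gamma_i\}$ up to signs, we may assume $t\ge 0$, so it remains to prove the scalar bound.

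\emph{The key weighted pointwise bound.} The workhorse will be the estimate
\[
\langle t-|x|\rangle\,|\partial_\alpha g(t,x)|\ \lesssim\ \sum_{|I|=1}|\Gamma^Ig(t,x)|,\qquad t\ge 0,\ \ \alpha=0,\dots,d,
\]
valid for every smooth $g$. It follows from the identities $(t^2-|x|^2)\partial_tg=tSg-x^aL_ag$ and $(t^2-|x|^2)\partial_ag=tL_ag-x_aSg-x^b\Omega_{ba}g$: dividing by $t+|x|$ and using that $t/(t+|x|)$ and $x_a/(t+|x|)$ are bounded yields $|(t-|x|)\partial_\alpha g|\lesssim\sum_{|I|=1}|\Gamma^Ig|$, while in the range $|t-|x||<1$ one uses instead $\langle t-|x|\rangle\lesssim1$ together with the fact that each $\partial_\alpha$ is one of the $\Gamma_i$. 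Iterating (with the commutator bounds of Proposition~\ref{prop:commu2}) gives $\langle t-|x|\rangle^{k}\,|\partial^kg|\lesssim\sum_{|J|\le k}|\Gamma^Jg|$; and on each sphere $|x|=r$ one also has $|\partial_\theta g|\lesssim\sum_{1\le a<b\le d}|\Omega_{ab}g|$ for angular derivatives, since the $\Omega_{ab}$ span the tangent space of the sphere with $O(1)$ coefficients.

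\emph{Region away from the light cone.} With these tools I would fix $(t,x)$ and split $\R^d$ at time $t$. If $\langle t-r\rangle\ge\tfrac1{10}\langle t+r\rangle$, put $\lambda:=\langle t-r\rangle$ (so $\lambda\sim\langle t+r\rangle$ and the target weight is $\sim\lambda^{-d/2}$) and work on $B:=B(x,c\lambda)$ with $c$ small; there $\langle t-|y|\rangle\sim\lambda$ for all $y\in B$. Rescaling $B$ to the unit ball and applying the Euclidean embedding $H^2\hookrightarrow L^\infty$ (legitimate since $d\le3$) gives $|f(t,x)|^2\lesssim\lambda^{-d}\sum_{|I|\le2}\|\lambda^{|I|}\partial^If\|_{L^2(B)}^2$, and the iterated weighted bound turns $\lambda^{|I|}|\partial^If|$ on $B$ into $\sum_{|J|\le|I|}|\Gamma^Jf|$, so $|f(t,x)|\lesssim\lambda^{-d/2}\sum_{|J|\le2}\|\Gamma^Jf(t)\|$, which is the claim there. (The subcase $t+r\lesssim1$ is included, with $\lambda\sim1$, reducing to ordinary Sobolev on $\R^d$.)

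\emph{Region near the light cone --- the main obstacle.} In the complementary region $\langle t-r\rangle<\tfrac1{10}\langle t+r\rangle$ one has $t\sim r\sim t+r=:\lambda$ large while $w:=\langle t-r\rangle$ may be as small as $1$, and one must produce the full gain $\lambda^{-(d-1)}w^{-1}$; this is the delicate part I expect to be the main obstacle. The right localization is a curved shell: in polar coordinates $(r,\theta)\in I_0\times S^{d-1}$ with $I_0$ the interval of length $\sim w$ centered at $|x|$ --- on which $\langle t-|y|\rangle\sim w$ --- pass to the rescaled radial variable $\rho=(r-|x|)/w$, keeping $\theta$, so the shell becomes a fixed-size subdomain of the $d$-dimensional manifold $\R\times S^{d-1}$. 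There $H^2\hookrightarrow L^\infty$ gives $|f(t,x)|^2\lesssim\sum_{|I|\le2}\|\partial_{(\rho,\theta)}^If\|_{L^2(d\rho\,d\theta)}^2$; since $\partial_\rho=w\,\partial_r$ with $w$ frozen and $\langle t-|y|\rangle\sim w$ on the shell, the weighted bound and the angular bound from the previous step give $|\partial_{(\rho,\theta)}^If|\lesssim\sum_{|J|\le|I|}|\Gamma^Jf|$, while the volume change $d\rho\,d\theta\sim\lambda^{-(d-1)}w^{-1}\,dy$ supplies exactly the claimed weights. The points needing care are: that $H^2$ of this rescaled shell embeds in $L^\infty$ with only two derivatives when $d=2,3$ (it does, the shell having bounded geometry after rescaling); that $w=\langle t-r\rangle$ is genuinely comparable to $\langle t-|y|\rangle$ throughout $I_0\times S^{d-1}$, which is why the radial width must be taken $\sim w$ and not smaller; and that the commutators produced when iterating the radial and angular bounds to second order stay within the budget $|J|\le2$, which is precisely what Proposition~\ref{prop:commu2} provides. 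Combining the two regions closes the scalar estimate, hence the proposition.
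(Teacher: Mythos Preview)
The paper does not supply its own proof of this proposition: it is stated with a citation to \cite{Sogge} and used as a black box. Your sketch is a correct rendition of the standard Klainerman--Sobolev argument (localization away from and near the cone, rescaled Sobolev embedding $H^2\hookrightarrow L^\infty$ valid for $d\le 3$, and conversion of Euclidean derivatives to $\Gamma$-fields via the weighted bound $\langle t-r\rangle|\partial g|\lesssim|\Gamma g|$), together with the componentwise reduction and the $\Gamma\leftrightarrow\widehat\Gamma$ comparison from Section~\ref{subsec:Notation} for the spinor version. There is nothing to compare against in the paper itself.
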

Next, we state a classical result, whose proof can be found in \cite{Sogge}. 
\begin{proposition}\label{prop:extradecay}
	Let $u$ be a sufficiently regular function in $\R^{d+1}$ with $d \geq 2$. Then we have 
	\begin{align}
		\langle t- r \rangle |\partial u| \lesssim  |\Gamma u| \lesssim \sum_{|J|\leq 1}|\widehat{\Gamma}^J u|.
	\end{align}
\end{proposition}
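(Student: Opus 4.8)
\textbf{Proof proposal for Proposition \ref{prop:extradecay}.}

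The plan is to establish the two inequalities separately, working locally in the regions $\{r \leq t/2\} \cup \{r \geq 2t\}$ (where $\langle t-r\rangle \sim \langle t+r\rangle \gtrsim 1 + r$, so the estimate is essentially trivial from $|\partial u| \leq |\nabla u| + |\partial_t u|$ and the definition of the scaling and rotation vector fields) and in the interior cone region $\{t/2 \leq r \leq 2t\}$ (where the estimate has content). First I would handle the right inequality $|\Gamma u| \lesssim \sum_{|J| \leq 1}|\widehat{\Gamma}^J u|$: since $\Gamma$ and $\widehat{\Gamma}$ differ only by a constant matrix multiplication (as recorded in the notation section), for any component $\Gamma_k$ one has $|\Gamma_k u| \leq |\widehat{\Gamma}_k u| + C|u|$, and summing over $k$ gives the claim. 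The scaling vector field $S$ is unmodified, so it contributes directly.

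The substantive part is the left inequality $\langle t - r\rangle |\partial u| \lesssim |\Gamma u|$. The key algebraic identity is that the ``bad'' derivatives can be recovered from the vector fields with a weight: writing $\partial_t$ and $\partial_a$ in terms of $S$, $L_a$, $\Omega_{ab}$ and the ``good'' derivatives, one checks the standard relations such as
\begin{align*}
(t - r)(t+r)\,\partial_t &= tS - x^a L_a + (\text{terms involving } \Omega \text{ and } \partial), \\
(t^2 - r^2)\,\partial_a &= t L_a - x_a S + x^b \Omega_{ab} + (\text{lower order}).
\end{align*}
More precisely I would use the cleaner decomposition $\partial_\alpha = $ (a good vector field applied with coefficient of size $O(1)$ in the region $r \sim t$) $/ (t-r)$; the exact bookkeeping is routine. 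In the region $t/2 \leq r \leq 2t$ we have $t + r \sim t \sim r$, so dividing by $t^2 - r^2 = (t-r)(t+r) \sim (t-r)\langle t + r\rangle$ and multiplying back by $\langle t - r\rangle$ yields $\langle t-r\rangle|\partial u| \lesssim |\Gamma u|$ after absorbing the $O(1)$ coefficients; one should note $\langle t - r\rangle \sim |t - r|$ when $|t-r|$ is large and $\langle t - r\rangle \sim 1 \lesssim |\Gamma u|/|\partial u|$-type bounds hold trivially when $|t - r| \lesssim 1$ since then the weight is harmless. Combining with the far regions completes the first inequality, and chaining with the already-proven second inequality gives the full chain.

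The main obstacle — though it is more bookkeeping than genuine difficulty — is handling the transition at $|t - r| \lesssim 1$ and at the boundaries of the cone regions cleanly, making sure the constants in the coefficient functions (which are ratios like $t/(t+r)$, $x_a/(t+r)$) are genuinely bounded on the relevant region and that no division by a quantity that can vanish is performed. Since $u$ is only assumed ``sufficiently regular'' and the statement is pointwise, no integration or commutator machinery is needed; the whole argument is the linear-algebra identity expressing $\partial_\alpha$ through $\{S, L_a, \Omega_{ab}, \partial_\beta\}$ together with the elementary geometric observation about the size of $t \pm r$ in each region. I expect this to occupy only a few lines once the region decomposition is set up, which is presumably why the authors defer to \cite{Sogge}.
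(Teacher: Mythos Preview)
The paper does not give its own proof of this proposition; it simply defers to Sogge's monograph. Your outline is the standard argument and is essentially correct: the right inequality is immediate from the fact that each $\Gamma_k$ differs from $\widehat{\Gamma}_k$ by at most a constant matrix, and the left inequality comes from expressing $\partial_\alpha$ through the weighted vector fields.

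One point to clean up: your treatment of the far region $\{r \leq t/2\}\cup\{r\geq 2t\}$ is misstated. There $\langle t-r\rangle \sim \langle t+r\rangle$ is \emph{large}, so the bound is not ``trivial'' from the translations, scaling, and rotations alone --- rotations control only angular derivatives, and $S$ by itself does not separate $\partial_t$ from $\partial_r$; you still need the Lorentz boosts. In fact the region decomposition is unnecessary. The exact identities
\[
(t^2-r^2)\,\partial_t = tS - x^a L_a,\qquad (t^2-r^2)\,\partial_a = tL_a - x_a S + x^b\Omega_{ab}
\]
hold globally, and since the coefficients $t/(t+r)$, $x^a/(t+r)$ are bounded by $1$, dividing through gives $|t-r|\,|\partial u|\lesssim |\Gamma u|$ wherever $t+r>0$. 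The remaining case $|t-r|\lesssim 1$ (which in particular covers $t+r$ small) follows at once from $\partial_\alpha\in\{\Gamma_i\}$. This handles the interior and far regions in one stroke and removes the boundary bookkeeping you flagged as the main obstacle.
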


The following classical decay estimate requires weaker assumptions on the function and in turn gives weaker decay rate.

\begin{proposition}[\cite{KlainWave}]\label{prop:K-S-2}
Let $f$ be a sufficiently regular function in $\R^{d+1}$ with $d=2, 3$. 
It holds that
	\begin{equation}
	|f(t,x)|\lesssim \langle r\rangle^{-\frac{d-1}{2}}\sum_{\substack{|I_1| + |J_1|\le 2,\\ |I_2|+|J_2|\leq 2}}\left\|\nabla^{I_1} \Omega^{J_1} f(t,x)\right\|^{1/2} \left\|\nabla \nabla^{I_2}\Omega^{J_2}f(t,x)\right\|^{1/2}.
	\end{equation}
Consequently, if $\phi$ is a sufficiently regular $\C^{2^{d-1}}$-valued function in $\R^{d+1}$ with $d=2, 3$.
It holds that
	\begin{equation}
|\phi(t,x)|\lesssim \langle r\rangle^{-\frac{d-1}{2}}\sum_{\substack{|I_1| + |J_1|\le 2,\\ |I_2|+|J_2|\leq 2}}\left\|\nabla^{I_1} \widehat{\Omega}^{J_1} \phi(t,x)\right\|^{1/2} \left\|\nabla \nabla^{I_2}\widehat{\Omega}^{J_2}\phi(t,x)\right\|^{1/2}.	\end{equation}
\end{proposition}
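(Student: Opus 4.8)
The plan is to follow the classical route to weighted Sobolev inequalities of this type: reduce to a scalar estimate, and then split $\R^d$ into a bounded region, where the weight $\langle r\rangle^{-(d-1)/2}$ is harmless and a fixed‑time Gagliardo–Nirenberg inequality suffices, and the exterior region $\{|x|\ge1\}$, where one decouples the angular variables (controlled by a Sobolev embedding on the sphere $\mathbb{S}^{d-1}$) from the radial variable, whose integration supplies, through the Jacobian $\rho^{d-1}$, the decay $r^{-(d-1)/2}$.

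For the reduction, the $\C^{2^{d-1}}$-valued statement follows from the scalar one by applying it componentwise and summing, and then converting $\Omega$ into $\widehat{\Omega}$: since $\widehat{\Omega}_{ab}-\Omega_{ab}=-\tfrac{1}{2}\gamma^a\gamma^b$ is a constant matrix commuting with every $\nabla^I$, an easy induction on the order gives $\Omega^J\phi=\sum_{|J'|\le|J|}M_{J',J}\,\widehat{\Omega}^{J'}\phi$ with constant matrix coefficients (and the converse), hence $\|\nabla^I\Omega^J\phi\|\lesssim\sum_{|J'|\le|J|}\|\nabla^I\widehat{\Omega}^{J'}\phi\|$ and similarly with an extra $\nabla$; substituting produces exactly the claimed bound. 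For the scalar estimate on $\{|x|\le1\}$ I would invoke the Gagliardo–Nirenberg inequality $\|f\|_{L^\infty(\R^d)}\lesssim\|f\|^{1/2}\,\|\nabla^d f\|^{1/2}$, valid for $d=2,3$; since $\langle r\rangle^{-(d-1)/2}\gtrsim1$ there and $\nabla^d f=\nabla(\nabla^{d-1}f)$ with $d-1\le2$, the asserted right‑hand side dominates $|f(x)|$.

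The heart of the matter is the exterior region $|x|=R\ge1$. Here I would first use that each $\Omega_{ab}$ is tangent to the spheres $\{|x|=\rho\}$ and that the restrictions of $\{\Omega_{ab}\}$ generate the rotations of $\mathbb{S}^{d-1}$, so that $\sum_{|\beta|\le2}\|(\Omega^\beta f)(\rho\cdot)\|_{L^2(\mathbb{S}^{d-1})}^2$ is comparable to $\|f(\rho\cdot)\|_{H^2(\mathbb{S}^{d-1})}^2$; since $\dim\mathbb{S}^{d-1}\le2$, the embedding $H^2(\mathbb{S}^{d-1})\hookrightarrow L^\infty(\mathbb{S}^{d-1})$ yields $\sup_\omega|f(\rho\omega)|^2\lesssim\sum_{|\beta|\le2}\|(\Omega^\beta f)(\rho\cdot)\|_{L^2(\mathbb{S}^{d-1})}^2$. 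It then remains to bound each $\int_{\mathbb{S}^{d-1}}|(\Omega^\beta f)(R\omega)|^2\,d\omega$, and for this I would apply the fundamental theorem of calculus in the radial variable to $\rho\mapsto|\Omega^\beta f(\rho\omega)|^2$, use $R^{d-1}\le\rho^{d-1}$ for $\rho\ge R$ to insert the spherical volume element, convert the resulting $d\rho\,d\omega$ integral into $\int_{\{|x|\ge R\}}|\Omega^\beta f|\,|\partial_r\Omega^\beta f|\,dx$, and estimate this by Cauchy–Schwarz by $\|\Omega^\beta f\|\,\|\partial_r\Omega^\beta f\|\le\|\Omega^\beta f\|\,\|\nabla\Omega^\beta f\|$. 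Collecting the pieces and taking square roots (with $R\simeq\langle R\rangle$ for $R\ge1$) gives $|f(x)|\lesssim\langle r\rangle^{-(d-1)/2}\sum_{|\beta|\le2}\|\Omega^\beta f\|^{1/2}\|\nabla\Omega^\beta f\|^{1/2}$, which is a subsum of the asserted right‑hand side (take $I_1=I_2=0$, $J_1=J_2=\beta$).

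I expect the main obstacle to be bookkeeping rather than anything deep: one must perform the angular Sobolev embedding sphere‑by‑sphere and the radial integration in the right order (radial first, so that the Jacobian produces $R^{-(d-1)/2}$), and one must dispose of the mild technical point that "sufficiently regular" is enough to run the radial fundamental theorem of calculus — that is, that $\rho\mapsto\Omega^\beta f(\rho\omega)\to0$ as $\rho\to\infty$ for a.e. $\omega$; this is automatic whenever the right‑hand side is finite, since otherwise the weight $\rho^{d-1}$ would force $\|\Omega^\beta f\|=\infty$. The interpolated, square‑root shape of the right‑hand side requires no cleverness: it falls out of the single Cauchy–Schwarz applied to $\int|\Omega^\beta f|\,|\partial_r\Omega^\beta f|$, and the restriction $d\in\{2,3\}$ enters only through $H^2(\mathbb{S}^{d-1})\hookrightarrow L^\infty$ and the Gagliardo–Nirenberg inequality on $\R^d$.
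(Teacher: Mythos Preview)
Your proposal is correct and follows the standard classical route to this weighted Sobolev inequality. Note, however, that the paper does not supply its own proof of this proposition: it is stated with a citation to \cite{KlainWave} and used as a black box, so there is no in-paper argument to compare against. Your outline---angular Sobolev embedding on $\mathbb{S}^{d-1}$ combined with the radial fundamental theorem of calculus in the exterior, plus Gagliardo--Nirenberg in the bounded region, followed by the componentwise reduction and the $\Omega\leftrightarrow\widehat{\Omega}$ conversion via constant-matrix corrections---is precisely the argument one would find in the cited reference or in standard expositions of the Klainerman vector-field method, and all the steps you describe go through as written.
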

At the end of this subsection, we recall the well-known H\"older's inequality and Hardy's inequality.
\begin{proposition}\label{eq:holder}
	Let $0 <\alpha < 1,  1 \leq p \leq \infty$,  then we have 
	\begin{align}
		\||f|^{\alpha} |g|^{1-\alpha}\|_{p} \leq \|f\|_{p}^{\alpha}\|g\|_{p}^{1-\alpha}.
	\end{align}
	where $\|\cdot\|_{p}$ denotes the standard $L^p$ norm. 
	\end{proposition}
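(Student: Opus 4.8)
\textbf{Proof proposal for Proposition \ref{eq:holder}.}

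The plan is to deduce the stated inequality directly from the classical H\"older inequality with exponents $1/\alpha$ and $1/(1-\alpha)$, which are conjugate since $\alpha\in(0,1)$, treating the endpoint $p=\infty$ separately. First I would dispose of the $p=\infty$ case, where the claim reads $\||f|^\alpha|g|^{1-\alpha}\|_\infty\le\|f\|_\infty^\alpha\|g\|_\infty^{1-\alpha}$; this is immediate from the pointwise bound $|f(x)|^\alpha|g(x)|^{1-\alpha}\le\|f\|_\infty^\alpha\|g\|_\infty^{1-\alpha}$ valid for a.e.\ $x$, after which we pass to the essential supremum. For $1\le p<\infty$, the strategy is to write $|f|^\alpha|g|^{1-\alpha}$ raised to the $p$-th power as a product $(|f|^p)^\alpha\cdot(|g|^p)^{1-\alpha}$ and integrate.

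The key step is the application of H\"older's inequality to the two factors $(|f|^p)^{\alpha}$ and $(|g|^p)^{1-\alpha}$ with the conjugate pair $(1/\alpha,\,1/(1-\alpha))$: one has
\[
\int (|f|^p)^{\alpha}(|g|^p)^{1-\alpha}\,dx
\le\Big(\int |f|^p\,dx\Big)^{\alpha}\Big(\int |g|^p\,dx\Big)^{1-\alpha},
\]
since $\big((|f|^p)^\alpha\big)^{1/\alpha}=|f|^p$ is integrable (i.e.\ $f\in L^p$) and likewise for $g$. Taking the $p$-th root of both sides turns the left-hand side into $\||f|^\alpha|g|^{1-\alpha}\|_p$ and the right-hand side into $\|f\|_p^{\alpha}\|g\|_p^{1-\alpha}$, which is exactly the claim. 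If either $\|f\|_p$ or $\|g\|_p$ is infinite the inequality is trivially true, and if one of them vanishes then the corresponding function is zero a.e., forcing the left-hand side to vanish as well, so the nontrivial case is the one just treated.

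There is essentially no obstacle here: this is a standard interpolation-type consequence of H\"older's inequality, and the only points requiring the tiniest care are the separate handling of $p=\infty$ and the degenerate cases where a norm is $0$ or $\infty$. One could alternatively phrase the same computation via the log-convexity of $t\mapsto\log\|h\|_{1/t}$, but the direct H\"older argument above is the shortest route and is what I would write.
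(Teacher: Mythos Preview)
Your proof is correct; the paper itself does not supply a proof but simply recalls this as the well-known H\"older inequality, so there is nothing to compare against.
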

	
	\begin{proposition} \label{ineq:hardy}
		Suppose that $f$ is a smooth function in $\R^3$ and $f\in H^1(\R^3)$, then there exists a universal constant $C>0$, such that 
		\begin{align}
			\bigg \| \frac{f(x)}{|x|} \bigg\| \leq C \|\partial_r f\|.
		\end{align} 
	\end{proposition}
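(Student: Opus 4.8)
The plan is to reduce the estimate to the elementary divergence identity $\operatorname{div}\!\big(x/|x|^{2}\big)=|x|^{-2}$, valid in $\R^{3}$, together with the observation that $\nabla f\cdot x/|x|^{2}=\partial_{r}f/|x|$; these two facts pair the two sides of the claimed inequality through the Cauchy--Schwarz inequality.

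Concretely, I would first fix an annulus $A_{\delta,R}=\{\delta\le|x|\le R\}$, on which every integral below is finite, and apply the divergence theorem:
\[
\Big\|\tfrac{f}{|x|}\Big\|_{L^{2}(A_{\delta,R})}^{2}=\int_{A_{\delta,R}}|f|^{2}\,\operatorname{div}\!\Big(\tfrac{x}{|x|^{2}}\Big)\,dx=-\int_{A_{\delta,R}}\nabla\big(|f|^{2}\big)\cdot\tfrac{x}{|x|^{2}}\,dx+\text{(boundary terms)}.
\]
Since $\nabla(|f|^{2})=2\operatorname{Re}(\bar f\,\nabla f)$ and $\nabla f\cdot x/|x|^{2}=\partial_{r}f/|x|$, the interior term equals $-2\operatorname{Re}\int_{A_{\delta,R}}\bar f\,\partial_{r}f/|x|\,dx$, which by Cauchy--Schwarz is at most $2\,\|f/|x|\|_{L^{2}(A_{\delta,R})}\,\|\partial_{r}f\|_{L^{2}(A_{\delta,R})}$. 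The boundary contribution on $\{|x|=\delta\}$ carries a favorable sign and can simply be discarded, whereas the one on $\{|x|=R\}$ is $R^{-1}\!\int_{|x|=R}|f|^{2}\,dS$. Hence
\[
\Big\|\tfrac{f}{|x|}\Big\|_{L^{2}(A_{\delta,R})}^{2}\le 2\,\Big\|\tfrac{f}{|x|}\Big\|_{L^{2}(A_{\delta,R})}\,\|\partial_{r}f\|+\frac1R\int_{|x|=R}|f|^{2}\,dS .
\]

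To finish I would let $R\to\infty$ and then $\delta\to0$. Because $f\in L^{2}(\R^{3})$, the function $R\mapsto R^{2}\!\int_{S^{2}}|f(R\omega)|^{2}\,d\omega$ is integrable on $(1,\infty)$, so along a suitable sequence $R_{n}\to\infty$ one has $R_{n}^{-1}\!\int_{|x|=R_{n}}|f|^{2}\,dS=R_{n}\!\int_{S^{2}}|f(R_{n}\omega)|^{2}\,d\omega\to0$. For each fixed $\delta>0$ the quantity $\|f/|x|\|_{L^{2}(\{|x|\ge\delta\})}$ is finite (it is $\le\delta^{-1}\|f\|$), so passing to this sequence and solving the resulting quadratic inequality yields $\|f/|x|\|_{L^{2}(\{|x|\ge\delta\})}\le 2\|\partial_{r}f\|$, uniformly in $\delta$. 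Monotone convergence as $\delta\to0$ then gives $\|f/|x|\|\le 2\|\partial_{r}f\|$, which is the claim with $C=2$, and in particular shows the left-hand side is finite.

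I do not expect a genuine obstacle, as this is a classical Hardy-type inequality; the only slightly delicate point is the justification of the integration by parts — specifically, controlling the sphere $\{|x|=R\}$ as $R\to\infty$ — which is exactly why I would work on annuli and pass to the limit rather than integrate by parts directly on $\R^{3}$. (An equivalent route avoiding the distributional identity is to freeze the angle, write $f(r\omega)=-\int_{r}^{\infty}\partial_{s}f(s\omega)\,ds$, prove the one-dimensional weighted bound $\int_{0}^{\infty}|g(r)|^{2}\,dr\le 4\int_{0}^{\infty}|g'(s)|^{2}s^{2}\,ds$ by the same integration-by-parts trick, and integrate over $S^{2}$.)
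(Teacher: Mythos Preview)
Your argument is correct. The paper does not actually prove this proposition; it merely recalls it as ``the well-known Hardy's inequality'' alongside H\"older's inequality, without any further justification. Your integration-by-parts approach via the identity $\operatorname{div}(x/|x|^{2})=|x|^{-2}$ in $\R^{3}$, carried out on annuli with careful handling of the boundary terms and the limit $R\to\infty$ along a well-chosen sequence, is a standard and fully rigorous route to the inequality, and yields the sharp constant $C=2$. There is nothing to compare against in the paper, and your proof can stand as a complete justification of the stated result.
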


\subsection{Energy estimates for Dirac equations}

Given a function $\phi$, following Alinhac \cite{Alinhac01b}, we define its ghost weight energy (below $\delta>0$)
\begin{equation}\label{eq:DiracE}
E(\phi, t ) = \int_{\R^d} |\phi(t, x)|^2 \, dx + \int_{t_0}^t \int_{\R^d} {| [\phi(\tau, x)]_-|^2\over \langle \tau-r\rangle^{1+2\delta}} \, dxd\tau.
\end{equation}
The following energy estimates allow one benefit from the $\gamma^0$ structure in the nonlinear terms, which is from earlier observations in \cite{DoLiMaYu, DongZoeDKG}.

\begin{proposition}\label{prop:DiracEE}
Let $\phi$ be the solution to the Dirac equation
$$
\aligned
-i\gamma^\mu \partial_\mu \phi = F,
\\
\phi(t_0) = \phi_0,
\endaligned 
$$
in which $F$ is a sufficiently nice function.
Then, one has
\begin{equation}
E(\phi, t ) \lesssim_\delta E(\phi, t_0 ) + \int_{t_0}^t \int_{\R^d} \big| \phi^* \gamma^0 F \big| \, dxds.
\end{equation}

\end{proposition}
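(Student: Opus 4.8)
The plan is to derive the energy identity directly from the Dirac equation, exploiting the structure of the Dirac matrices in \eqref{eq:gamma} together with the hermiticity $\gamma^0 F = F^* \gamma^0$ implicit in the setup of \eqref{eq:H-F} (more precisely, we only need that $\phi^*\gamma^0 F$ makes sense as the contraction that appears). First I would introduce the weight $w = w(\tau,r)$ with $w' = \langle \tau - r\rangle^{-(1+2\delta)}$ (bounded, since $\delta>0$), and work with the weighted quantity $e^{-w}|\phi|^2$ rather than $|\phi|^2$ itself; this is the ghost-weight trick of Alinhac. Multiplying the equation $-i\gamma^\mu\partial_\mu\phi = F$ on the left by $e^{-w}\phi^*\gamma^0$ and adding the conjugate, the key computation is
\begin{align*}
\partial_\mu\big(e^{-w}\phi^*\gamma^0(-i\gamma^\mu)\phi\big) = e^{-w}\big(\phi^*\gamma^0 F + F^*\gamma^0\phi\big) - (\partial_\mu w)\, e^{-w}\phi^*\gamma^0(-i\gamma^\mu)\phi,
\end{align*}
using $\gamma^0(-i\gamma^\mu)$ is hermitian (this follows from $(\gamma^\mu)^* = -\eta_{\mu\nu}\gamma^\nu$ in \eqref{eq:gamma}: indeed $\gamma^0\gamma^0 = I$ and $\gamma^0\gamma^a$ is hermitian) so that the $\mu=0$ term on the left is $\partial_t(e^{-w}|\phi|^2)$ and the cross terms combine into $2\,\mathrm{Re}$.

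The second step is to identify the good sign of the ghost-weight term. Choosing $w$ to depend on $\tau - r$ appropriately, the spatial-derivative part of $\partial_\mu w \cdot \phi^*\gamma^0(-i\gamma^\mu)\phi$ combines with the time part to produce, after using $\partial_r w = -\partial_\tau w = -\langle\tau-r\rangle^{-(1+2\delta)}$ on the relevant null direction, a term proportional to
\begin{align*}
\frac{1}{\langle \tau - r\rangle^{1+2\delta}}\,\phi^*\gamma^0\Big(I - \tfrac{x_a}{r}\gamma^a\Big)\phi \cdot(\text{something nonnegative}),
\end{align*}
and here one recognizes that $\gamma^0(I - \tfrac{x_a}{r}\gamma^a)$ is (up to a positive factor) the orthogonal projection appearing in $[\phi]_-$; concretely $\phi^*\gamma^0(I-\tfrac{x_a}{r}\gamma^a)\phi = \tfrac12 |[\phi]_-|^2$ or a comparable nonnegative quantity, using that $\tfrac{x_a}{r}\gamma^0\gamma^a$ is a hermitian involution. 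This is exactly the mechanism by which the spacetime integral $\int\!\!\int \langle\tau-r\rangle^{-(1+2\delta)}|[\phi]_-|^2$ in \eqref{eq:DiracE} is generated with a favorable sign.

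The third step is routine bookkeeping: integrate the divergence identity over the spacetime slab $[t_0,t]\times\R^d$, apply the divergence theorem (the spatial flux terms vanish under the decay/regularity assumption on $\phi$, or are handled by a standard cutoff-and-limit argument), move the good ghost-weight term to the left, and use $e^{-w}\sim 1$ since $0 \le w \le \int_{\R}\langle s\rangle^{-(1+2\delta)}\,ds = C_\delta < \infty$. What remains on the right is $E(\phi,t_0)$ plus $\int_{t_0}^t\!\!\int_{\R^d} 2e^{-w}\,\mathrm{Re}(\phi^*\gamma^0 F)$, which is bounded by $\int_{t_0}^t\!\!\int_{\R^d}|\phi^*\gamma^0 F|$ up to the constant $C_\delta$; this yields the claimed inequality with implied constant depending on $\delta$.

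The main obstacle — really the only nontrivial point — is verifying that the ghost-weight term has the right sign and that it genuinely controls $|[\phi]_-|^2$ rather than some weaker quantity. This hinges on the algebraic identity relating $\phi^*\gamma^0(I - \tfrac{x_a}{r}\gamma^0\gamma^a)\phi$ (note the extra $\gamma^0$ to match the definition $[\Psi]_\pm = \Psi \pm \tfrac{x_a}{r}\gamma^0\gamma^a\Psi$) to $|[\phi]_-|^2$, which in turn uses $(\gamma^0\gamma^a)^* = \gamma^0\gamma^a$ and $(\tfrac{x_a}{r}\gamma^0\gamma^a)^2 = \tfrac{x_ax_b}{r^2}\gamma^0\gamma^a\gamma^0\gamma^b = \tfrac{x_ax_b}{r^2}\cdot(-\gamma^a\gamma^b) = I$ by the Clifford relations \eqref{eq:gamma}; hence $P_- := \tfrac12(I - \tfrac{x_a}{r}\gamma^0\gamma^a)$ is an orthogonal projection and $[\phi]_- = 2P_-\phi$, giving $|[\phi]_-|^2 = 4\,\phi^* P_-\phi$. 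Once this identification is in hand the rest is a direct, if slightly tedious, divergence-theorem computation; I would present the algebraic lemma first and then the integration.
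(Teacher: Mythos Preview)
Your approach is exactly the ghost-weight energy method the paper has in mind; the paper does not actually supply a proof of this proposition but simply attributes it to the earlier works \cite{DoLiMaYu, DongZoeDKG}, where precisely this multiplier argument is carried out. So there is no distinct ``paper's proof'' to compare against---your outline \emph{is} the intended proof.

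Two small algebraic slips worth cleaning up before you write it out in full. First, $\gamma^0(-i\gamma^\mu)$ is \emph{anti}-hermitian, not hermitian: you correctly note $\gamma^0\gamma^0=I$ and $\gamma^0\gamma^a$ hermitian, but then multiplying by $-i$ flips this. The clean current is $j^\mu=\phi^*\gamma^0\gamma^\mu\phi$, which is real because $\gamma^0\gamma^\mu$ is hermitian; its divergence comes out as $\partial_\mu j^\mu=-2\,\mathrm{Im}(\phi^*\gamma^0 F)$ rather than $2\,\mathrm{Re}$. This is immaterial for the estimate since you bound by $|\phi^*\gamma^0 F|$ anyway. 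Second, be careful with the sign of $w$: taking $w=W(t-r)$ with $W'(q)=\langle q\rangle^{-(1+2\delta)}>0$ gives $\partial_\mu w\cdot j^\mu = W'(t-r)\,\phi^*(I-\tfrac{x_a}{r}\gamma^0\gamma^a)\phi = \tfrac12 W'(t-r)\,|[\phi]_-|^2$, so the term $-(\partial_\mu w)e^{-w}j^\mu$ lands on the \emph{left} with the correct sign after integration. Your projection identity $P_-=\tfrac12(I-\tfrac{x_a}{r}\gamma^0\gamma^a)$, $[\phi]_-=2P_-\phi$, $|[\phi]_-|^2=4\phi^*P_-\phi$ is correct and is indeed the crux of the computation.
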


\subsection{Pointwise estimates for Dirac equations}
In \eqref{eq:DiracE} and Proposition \ref{prop:DiracEE}, we see $[\phi]_-$ enjoys a good spacetime integral bound. Now we show that the pointwise bound of $[\psi]_-$ is also better than $\psi$ in the sense that it decays faster in time.

\begin{lemma}[\cite{DLY}] \label{lem:D-Point-improve}
Let $\phi$ solve
\begin{align}
-i\gamma^\mu \partial_\mu \phi = F, \label{eq:Dirac-F}
\end{align}
then one has 
\begin{align*}
|\partial [\phi]_- | \lesssim {1\over \langle t\rangle} \sum_{|I|\leq 1} |\widehat{\Gamma}^I \phi|  + |F|,
\qquad {t/2} \leq |x|\leq 2t.
\end{align*}
\end{lemma}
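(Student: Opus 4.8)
The plan is to exploit the algebraic identity for $[\phi]_-$ together with the Dirac equation, decomposing derivatives into the ``good'' directions tangent to the light cone and the ``bad'' radial direction, and observing that the bad direction acting on $[\phi]_-$ produces, up to the source term $F$, only good derivatives of $\phi$ (which gain a factor $\langle t-r\rangle$) or lower-order terms that come with a factor $1/r \sim 1/\langle t\rangle$ in the region $t/2\le |x|\le 2t$. Concretely, write $\partial_\alpha = \frac{x_\alpha}{r}\partial_r + (\text{good derivatives})$ where by ``good derivatives'' I mean combinations like $\partial_t + \partial_r$ and the angular derivatives $\Omega_{ab}/r$, all of which satisfy $|\text{good}\cdot \phi| \lesssim \langle t-r\rangle^{-1}\sum_{|I|\le 1}|\widehat\Gamma^I\phi|$ by Proposition \ref{prop:extradecay} (rewritten appropriately). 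Since $\langle t-r\rangle^{-1}\le 1$ trivially, and in the cone interior $\langle t\rangle \sim \langle t+r\rangle$, it will suffice to show that the genuinely dangerous contribution — $\frac{x_\alpha}{r}\partial_r[\phi]_-$ — is controlled by $\langle t\rangle^{-1}\sum_{|I|\le 1}|\widehat\Gamma^I\phi| + |F|$.

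The key computation is therefore: express $\partial_r[\phi]_-$ using the equation. We have $[\phi]_- = \phi - \frac{x_a}{r}\gamma^0\gamma^a\phi$. The multiplier $\frac{x_a}{r}\gamma^0\gamma^a$ is exactly the symbol that appears when one writes the Dirac operator in polar-type coordinates: using $\gamma^\mu\partial_\mu$ and the anticommutation relations \eqref{eq:gamma}, one finds that $-i\gamma^0(\partial_t - \frac{x_a}{r}\gamma^0\gamma^a\partial_r)\phi$ equals $-i\gamma^\mu\partial_\mu\phi$ modulo angular derivatives $\frac{1}{r}\Omega_{ab}\phi$ and the zeroth-order curvature term $\frac{1}{r}\gamma^0\gamma^a\phi$-type pieces (the $-\frac12\gamma^a\gamma^b$ correction in $\widehat\Omega_{ab}$ is precisely designed to make this clean). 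Hence $F = -i\gamma^\mu\partial_\mu\phi$ can be rearranged to solve for the combination $\partial_t\phi - \frac{x_a}{r}\gamma^0\gamma^a\partial_r\phi$ in terms of $F$, angular derivatives (good, $O(\langle t-r\rangle^{-1}\langle\widehat\Gamma\phi\rangle)$ after dividing by $r\sim\langle t\rangle$), and $\frac1r$ times zeroth-order terms. From this, $\partial_r[\phi]_- = \partial_r\phi - \frac{x_a}{r}\gamma^0\gamma^a\partial_r\phi - (\partial_r\frac{x_a}{r})\gamma^0\gamma^a\phi$; the last term is $O(r^{-1}|\phi|)$, and $\partial_r\phi - \frac{x_a}{r}\gamma^0\gamma^a\partial_r\phi = (\partial_r + \partial_t)\phi + (\partial_r\phi - \partial_t\phi) - \frac{x_a}{r}\gamma^0\gamma^a\partial_r\phi + \partial_t\phi - \ldots$; I would organize this so that $(\partial_t - \frac{x_a}{r}\gamma^0\gamma^a\partial_r)\phi$ is replaced via the equation and the remaining $\partial_t\phi - \partial_r\phi$-type piece is a good derivative. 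Each resulting term is then bounded by $\langle t\rangle^{-1}\sum_{|I|\le 1}|\widehat\Gamma^I\phi| + |F|$ on $t/2\le|x|\le 2t$.

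Assembling: $|\partial[\phi]_-| \le |\frac{x_\alpha}{r}\partial_r[\phi]_-| + |\text{good}\cdot[\phi]_-|$; the first term is handled by the paragraph above, and for the second I note that good derivatives of $[\phi]_-$ reduce to good derivatives of $\phi$ plus $r^{-1}$-terms since $[\phi]_- = \phi - \frac{x_a}{r}\gamma^0\gamma^a\phi$ and the good vector fields hitting the coefficient $\frac{x_a}{r}$ produce only $O(r^{-1})$ factors (the angular fields annihilate $x_a/r$ up to such factors, and $\partial_t+\partial_r$ hits it with $r^{-1}$ as well). All $r^{-1}$ factors become $\langle t\rangle^{-1}$ in the cone interior. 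This gives the claimed bound $|\partial[\phi]_-| \lesssim \langle t\rangle^{-1}\sum_{|I|\le 1}|\widehat\Gamma^I\phi| + |F|$.

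I expect the main obstacle to be bookkeeping the algebra of the Clifford/gamma matrices cleanly enough to see that the radial part of the Dirac operator really is $-i\gamma^0(\partial_t - \frac{x_a}{r}\gamma^0\gamma^a\partial_r)$ modulo angular and zeroth-order terms, and in particular tracking that the zeroth-order remainder genuinely carries a $1/r$ (equivalently $1/\langle t\rangle$ in the relevant region) rather than being $O(1)$ — this is where the specific structure of $[\phi]_-$ and the modified rotations $\widehat\Omega_{ab}$ is essential, and it is the one place a naive estimate would fail. Since the statement is cited from \cite{DLY}, I would either reproduce this gamma-matrix identity directly or reduce to it; everything else is routine use of Proposition \ref{prop:extradecay} and the region restriction $t/2\le|x|\le 2t$.
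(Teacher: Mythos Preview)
Your approach is workable but takes a genuinely different route from the paper's proof. The paper does \emph{not} decompose into radial/angular/null directions at all; instead it substitutes the Lorentz-boost identity $\partial_a = t^{-1}L_a - (x_a/t)\partial_t$ directly into the Dirac equation $\partial_t\phi + \gamma^0\gamma^a\partial_a\phi = i\gamma^0 F$, which immediately produces
\[
\Big(I - \tfrac{x_a}{r}\gamma^0\gamma^a\Big)\partial_t\phi
= \gamma^0\gamma^a\Big(\tfrac{x_a}{t}-\tfrac{x_a}{r}\Big)\partial_t\phi - \tfrac{1}{t}\gamma^0\gamma^a L_a\phi + i\gamma^0 F,
\]
whose left-hand side is exactly $\partial_t[\phi]_-$. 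The right-hand side is bounded by $\langle t\rangle^{-1}|\Gamma\phi| + |F|$ using $|x_a/t - x_a/r| = |t-r|/t$ together with Proposition~\ref{prop:extradecay}. The spatial derivatives $\partial_a[\phi]_-$ are then handled by one more application of $\partial_a = t^{-1}L_a - (x_a/t)\partial_t$, this time to $[\phi]_-$, feeding in the already-established $\partial_t[\phi]_-$ bound. This is a three-line argument with no Clifford-algebra bookkeeping beyond recognizing the left-hand side.

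Your plan (extract $\partial_r[\phi]_-$ from the equation and treat $\partial_t+\partial_r$ and angular pieces as good derivatives) can be made to work, but the middle paragraph is muddled: the equation naturally produces $\partial_t\phi + \tfrac{x_a}{r}\gamma^0\gamma^a\partial_r\phi$, not the minus-sign combination you wrote, so your ``replace via the equation'' step as stated is off by a sign. The correct maneuver is to apply the projector $I-M$ (with $M=\tfrac{x_a}{r}\gamma^0\gamma^a$, $M^2=I$) to the equation and use $(I-M)M=-(I-M)$ to obtain $(\partial_t-\partial_r)[\phi]_- = -(I-M)\cdot(\text{angular}) + i(I-M)\gamma^0 F$; then $(\partial_t+\partial_r)[\phi]_-$ is good via $\partial_t+\partial_r=(t+r)^{-1}(S+\tfrac{x_a}{r}L_a)$, and spatial derivatives follow. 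Also note $\partial_r(x_a/r)=0$, so the ``$O(r^{-1}|\phi|)$'' term you flagged actually vanishes. The upshot: your route is valid after these fixes, but the paper's boost substitution is shorter and sidesteps all of this algebra.
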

\begin{proof}
We only consider  $t\geq 1$ as the conclusion is obvious for $0\leq t\leq 1$.
We use the relation $\partial_{a}=t^{-1}L_{a}-(x_{a}/t)\del_t $ to rewrite~\eqref{eq:Dirac-F} as
	\begin{equation*}
	\left(I-\frac{x_{a}}{r}\gamma^0\gamma^{a}\right)\partial_{t}\phi
	=\gamma^0\gamma^{a}\left(\frac{x_{a}}{t}-\frac{x_{a}}{r}\right)\del_t \phi-\frac{\gamma^0\gamma^{a}}{t}L_{a}\phi+i\gamma^0 F,
	\end{equation*}
	which yields
	\begin{equation}\label{eq:ptvarphi}
	\left|\partial_{t}\left( [\phi]_{-}\right)\right|
	\lesssim \frac{1}{\langle t\rangle}\left|\Gamma \phi\right|+|F|.
	\end{equation}
	Thus in the region ${t/2} \leq |x|\leq 2t$, employing ~\eqref{eq:ptvarphi} and the identity $\partial_{a}=t^{-1}L_{a}-(x_{a}/t)\del_t $ gives 
	\begin{equation}\label{eq:pavarphi}
	\begin{aligned}
	\left|\partial_{a}\left( [\phi]_{-}\right)\right|
	&\lesssim \frac{1}{t}|L_{a}\left( [\phi]_{-}\right)|+|\del_t \left( [\phi]_{-}\right)|\\
	&\lesssim   \frac{1}{\langle t\rangle}\big(\left|\Gamma \phi\right| + |\phi| \big)+|F|.
	\end{aligned}
	\end{equation}
	By \eqref{eq:ptvarphi} and \eqref{eq:pavarphi}, the proof is done.
\end{proof}

\subsection{Structure of the nonlinearity}

In 2D,  since the decay rate for Dirac solution is slower,  we need to exploit the structure of the nonlinear term $\psi^*\gamma^0 \psi$ so as to close our argument.   
\begin{lemma}[\cite{DongZoeDKG, DoLi22}]\label{lem:decompose}
Let $\Psi$ and $\Phi$ be two $\C^{2^{d-1}}$-valued functions, and recall that
\begin{align*}
[\Psi]_\pm = \Psi \pm {x_a\over r} \gamma^0 \gamma^a \Psi.
\end{align*}
Then we have
\begin{align*}
\Psi^*\gamma^0\Phi = \frac{1}{4} \Big([\Psi]_{-}^* \gamma^0 [\Phi]_{-} + [\Psi]_{-}^* \gamma^0 [\Phi]_+ + [\Psi]_+^* \gamma^0 [\Phi]_{-}   \Big).
\end{align*}
In addition, if $\Psi$ and $\Phi$ are sufficiently smooth, it holds
\begin{align*}
|{\Gamma}^I (\Psi^*\gamma^0\Phi)|
\lesssim
\sum_{|I_1|+|I_2|\leq |I|} \Big(\big| [\widehat{\Gamma}^{I_1}\Psi]_{-}\big| \,\big| \widehat{\Gamma}^{I_2}\Phi\big| + \big|\widehat{\Gamma}^{I_1}\Psi\big| \,\big| [\widehat{\Gamma}^{I_2}\Phi]_{-}\big|   \Big).
\end{align*}
\end{lemma}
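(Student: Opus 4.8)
The plan is to reduce everything to three elementary algebraic facts about the matrix $S:=\tfrac{x_a}{r}\gamma^0\gamma^a$ occurring in the definition of $[\,\cdot\,]_\pm$, namely: $S^*=S$, $S^2=I$, and $S\gamma^0=-\gamma^0 S$. All three follow directly from the Clifford relations \eqref{eq:gamma}. Writing $\omega_a=x_a/r$ so that $\sum_a\omega_a^2=1$, one has $(\gamma^0\gamma^a)^*=(\gamma^a)^*(\gamma^0)^*=-\gamma^a\gamma^0=\gamma^0\gamma^a$ (using $(\gamma^0)^*=\gamma^0$, $(\gamma^a)^*=-\gamma^a$, and $\gamma^a\gamma^0=-\gamma^0\gamma^a$), hence $S^*=S$. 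Next, $\gamma^0\gamma^a\gamma^0\gamma^b=-(\gamma^0)^2\gamma^a\gamma^b=-\gamma^a\gamma^b$, so after symmetrizing in $a,b$ and invoking $\gamma^a\gamma^b+\gamma^b\gamma^a=-2\delta_{ab}I$ one gets $S^2=-\sum_{a,b}\omega_a\omega_b\gamma^a\gamma^b=\sum_a\omega_a^2\,I=I$. Finally $\gamma^0(\gamma^0\gamma^a)=\gamma^a$ while $(\gamma^0\gamma^a)\gamma^0=-\gamma^a$, giving $\gamma^0 S=-S\gamma^0$. These computations are identical in $d=2$ and $d=3$ (the index $a$ merely ranges over $\{1,2\}$ or $\{1,2,3\}$).

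Granting these facts, the pointwise identity is pure matrix algebra. Write $[\Psi]_\pm=(I\pm S)\Psi$, use $S^*=S$ to move $(I\pm S)$ across the conjugate transpose, and rewrite $S\gamma^0=-\gamma^0 S$ as $(I\mp S)\gamma^0=\gamma^0(I\pm S)$. Then $(I-S)\gamma^0(I-S)=\gamma^0(I+S)(I-S)=\gamma^0(I-S^2)=0$, so $[\Psi]_-^*\gamma^0[\Phi]_-=0$; likewise $(I-S)\gamma^0(I+S)=\gamma^0(I+S)^2=2\gamma^0(I+S)$ and $(I+S)\gamma^0(I-S)=\gamma^0(I-S)^2=2\gamma^0(I-S)$, whence $[\Psi]_-^*\gamma^0[\Phi]_+ + [\Psi]_+^*\gamma^0[\Phi]_-=2\Psi^*\gamma^0(I+S)\Phi+2\Psi^*\gamma^0(I-S)\Phi=4\Psi^*\gamma^0\Phi$. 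Adding the three contributions and dividing by $4$ yields the asserted decomposition (the first term being identically zero, its inclusion is harmless).

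For the differential version, I would first use the structure-preserving identity of Lemma \ref{lem:preserve} to move the vector fields onto the spinors, $\Gamma^I(\Psi^*\gamma^0\Phi)=\sum_{I_1+I_2=I}(\widehat{\Gamma}^{I_1}\Psi)^*\gamma^0(\widehat{\Gamma}^{I_2}\Phi)$, so that no vector field ever hits the zero-homogeneous coefficient $x_a/r$; then apply the pointwise identity just proven to each summand with $\Psi,\Phi$ replaced by $\widehat{\Gamma}^{I_1}\Psi,\widehat{\Gamma}^{I_2}\Phi$, and bound the resulting terms using $|\gamma^0|\lesssim 1$ together with $|[\,\cdot\,]_+|=|(I+S)(\cdot)|\lesssim|\cdot|$ (the operator norm of $S$ is $1$ since $S$ is self-adjoint with $S^2=I$). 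Each term is thereby controlled by $|[\widehat{\Gamma}^{I_1}\Psi]_-|\,|\widehat{\Gamma}^{I_2}\Phi|+|\widehat{\Gamma}^{I_1}\Psi|\,|[\widehat{\Gamma}^{I_2}\Phi]_-|$, and since $I_1+I_2=I$ forces $|I_1|+|I_2|=|I|$, summing gives the claim. The 2D case is verbatim with $\C^4$ replaced by $\C^2$. I do not expect a genuine obstacle; the only point needing care is the adjoint and anticommutation bookkeeping in establishing $S^*=S$, $S^2=I$, and $\gamma^0 S=-S\gamma^0$, since a sign error there propagates through the whole identity.
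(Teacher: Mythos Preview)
Your proof is correct. The paper does not supply its own proof of this lemma; it is quoted from the references \cite{DongZoeDKG, DoLi22}, so there is nothing in the paper to compare against. Your argument---isolating the three algebraic facts $S^*=S$, $S^2=I$, $S\gamma^0=-\gamma^0 S$ for $S=\tfrac{x_a}{r}\gamma^0\gamma^a$, deducing that $(I\pm S)$ are complementary $\gamma^0$-intertwined projections (up to a factor of $2$), and then invoking Lemma~\ref{lem:preserve} for the differentiated estimate---is exactly the standard route and is cleanly executed. The observation that the first term $[\Psi]_-^*\gamma^0[\Phi]_-$ vanishes identically is a nice clarification of why the decomposition looks asymmetric.
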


\subsection{Criteria for scattering}
Finally, we present a sufficient result which leads to linearly scattering of solution to Dirac equation, see \cite{DoLi22}.
\begin{lemma}\label{lem:scatter}
Let $\phi$ solve (below $d=2,3$, $s\geq 0$)
\begin{align*}
-i\gamma^\mu \partial_\mu \phi = F,
\qquad
\phi(t_0) = \phi_0 \in H^s(\R^{d}),
\end{align*}
and suppose 
\begin{align*}
\int_{t_0}^{+\infty} \| F(\tau) \|_{H^s} \, d\tau < +\infty.
\end{align*}
Then there exists a function $\phi^+ \in H^s(\R^{d})$ such that
\begin{align*}
\| \phi(t) - S(t-t_0)\phi^+ \|_{H^s}
\lesssim
\int_{t}^{+\infty} \|F(\tau)\|_{H^s} \, d\tau
\to 0,
\qquad
\text{as  } t\to +\infty.
\end{align*}
In the above, $S(t) = e^{it(i\gamma^0 \gamma^a \partial_a)}$ is the matrix group propagator, and $S(t-t_0)\phi^+$ solves
\begin{align*}
-i\gamma^\mu \partial_\mu \phi = 0,
\qquad
\phi(t_0) = \phi^+.
\end{align*}
\end{lemma}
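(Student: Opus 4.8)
The final statement to prove is Lemma~\ref{lem:scatter}, the scattering criterion for the Dirac equation.

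\medskip

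\textbf{Overall approach.} The plan is to use Duhamel's formula to represent the solution in terms of the linear Dirac propagator, and then show that the ``source contribution'' in Duhamel's formula converges in $H^s$ as $t\to+\infty$ by exploiting the hypothesis that $\int_{t_0}^{+\infty}\|F(\tau)\|_{H^s}\,d\tau<+\infty$ together with the fact that the propagator $S(t)$ is a unitary (hence $H^s$-isometric, since it is a Fourier multiplier with unimodular symbol) group. The candidate scattering state $\phi^+$ will be read off directly from this Duhamel representation.

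\medskip

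\textbf{Key steps.} First I would rewrite the Dirac equation $-i\gamma^\mu\partial_\mu\phi = F$ in Hamiltonian form: multiplying by $i\gamma^0$ and using the anticommutation relations \eqref{eq:gamma}, one gets $\partial_t\phi = i\gamma^0\gamma^a\partial_a\phi + i\gamma^0 F$, so that the solution operator of the homogeneous equation is exactly $S(t) = e^{it(i\gamma^0\gamma^a\partial_a)}$ as stated. One checks that $i\gamma^0\gamma^a\partial_a$ is skew-adjoint on $L^2$ (using $(\gamma^0\gamma^a)^* = \gamma^0\gamma^a$, which follows from \eqref{eq:gamma}), hence $S(t)$ is unitary on $L^2$ and, being a constant-coefficient Fourier multiplier, an isometry on every $H^s$. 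Second, Duhamel's principle gives
\begin{align*}
\phi(t) = S(t-t_0)\phi_0 + \int_{t_0}^t S(t-\tau)\,(i\gamma^0 F(\tau))\,d\tau.
\end{align*}
Third, I define the scattering data by
\begin{align*}
\phi^+ := \phi_0 + \int_{t_0}^{+\infty} S(t_0-\tau)\,(i\gamma^0 F(\tau))\,d\tau,
\end{align*}
where the integral converges absolutely in $H^s$ because $\|S(t_0-\tau)(i\gamma^0 F(\tau))\|_{H^s} = \|i\gamma^0 F(\tau)\|_{H^s} \lesssim \|F(\tau)\|_{H^s}$ and the latter is integrable by hypothesis; thus $\phi^+\in H^s(\R^d)$. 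Fourth, applying $S(t-t_0)$ and using the group law $S(t-t_0)S(t_0-\tau) = S(t-\tau)$, I subtract to obtain
\begin{align*}
\phi(t) - S(t-t_0)\phi^+ = -\int_{t}^{+\infty} S(t-\tau)\,(i\gamma^0 F(\tau))\,d\tau,
\end{align*}
and taking the $H^s$ norm, using the isometry property and the triangle inequality for Bochner integrals,
\begin{align*}
\|\phi(t) - S(t-t_0)\phi^+\|_{H^s} \leq \int_{t}^{+\infty} \|F(\tau)\|_{H^s}\,d\tau,
\end{align*}
which tends to $0$ as $t\to+\infty$ since it is the tail of a convergent integral. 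Finally I verify that $S(t-t_0)\phi^+$ indeed solves the homogeneous Dirac equation with data $\phi^+$ at $t=t_0$, which is immediate from the definition of $S$.

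\medskip

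\textbf{Main obstacle.} There is no serious analytic obstacle here; the statement is essentially the abstract principle ``absolute integrability of the nonlinearity in the right norm implies scattering.'' The only points requiring a little care are the bookkeeping with the matrix factor $i\gamma^0$ (confirming it does not affect $H^s$ norms, which follows since multiplication by a constant matrix is bounded on $H^s$, and more sharply that $S(t)$ commutes with it so norms are genuinely preserved) and justifying the manipulations of the Bochner integral (interchange of $S(t-t_0)$ with the integral, and the group law under the integral sign), both of which are routine given $t\mapsto S(t)$ is a strongly continuous unitary group. I would present these verifications briefly and state the conclusion.
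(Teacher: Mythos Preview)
Your proposal is correct and follows the standard Duhamel-based argument for scattering; the paper itself does not give a proof of this lemma but simply cites \cite{DoLi22}, where the same approach is used. One small slip: with $(\gamma^0\gamma^a)^*=\gamma^0\gamma^a$ and $\partial_a$ skew-adjoint, the operator $i\gamma^0\gamma^a\partial_a$ is \emph{self}-adjoint (not skew-adjoint), which is exactly what you need for $S(t)=e^{it(i\gamma^0\gamma^a\partial_a)}$ to be unitary via Stone's theorem; the conclusion is unaffected.
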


\section{The 3D quadratic Dirac equations}\label{sec:3d}

In this section, we study the cubic and quadratic Dirac equation  and show Theorem \ref{thm:existence} and Theorem \ref{thm:3Dquadratic}. The proof of Theorem \ref{thm:3Dquadratic} is typical and can be adapted to prove the cubic case, so in the paragraph below, we only treat the quadratic nonlinear Dirac equation and then illustrate some necessary changes to show the cubic case.

To set reasonable bootstrap assumptions, we need the following lemma, which guarantees boundedness or smallness of the initial data hit with different weighted derivatives.
\begin{lemma}\label{lem:ID-3d}
Let \eqref{eq:thm-3Ddata} hold, then we have
\begin{align}
E(\widehat{\Gamma}^I \psi, t_0 )^{1/2} &\lesssim 1,
\qquad |I|\leq N,
\\
 E(\widehat{\Gamma}^J \partial \psi, t_0 )^{1/2} &\lesssim \epsilon,
\qquad |J|\leq N-1.
\end{align}
\end{lemma}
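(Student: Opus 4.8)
The plan is to establish the two estimates separately, starting from the first. For the bound $E(\widehat\Gamma^I\psi,t_0)^{1/2}\lesssim 1$ with $|I|\le N$, recall from \eqref{eq:DiracE} that $E(\phi,t_0)=\|\phi(t_0)\|^2$ since the spacetime integral term vanishes at $\tau=t_0$. So it suffices to show $\|\widehat\Gamma^I\psi(t_0)\|\lesssim 1$. The modified vector fields $\widehat\Gamma$ built from $S$, $\partial_t$, $\nabla$, $\widehat L_a$, $\widehat\Omega_{ab}$ differ from the ordinary ones by constant matrices, so by the comparison inequality in Section~\ref{subsec:Notation} it is enough to bound $\|\Gamma^I\psi(t_0)\|$. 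Each $\Gamma^I$ is a product of at most $|I|$ factors; the spatial vector fields $\nabla$, $\Omega_{ab}$ at time $t_0$ only contribute derivatives weighted by $\langle x\rangle$ (since $t_0$ is a fixed constant, the $t\partial_a$, $x_a\partial_t$, $t\partial_t$ pieces of $L_a$, $S$ produce either bounded coefficients or $\partial_t$ factors), so the only genuine issue is the appearance of time derivatives $\partial_t$. This is where the Dirac equation enters.

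The key step is to trade time derivatives for spatial ones using \eqref{eq:Soler}. Writing $\gamma^0(-i\gamma^\mu\partial_\mu)\psi=\gamma^0(\psi^*H\psi)F\psi$ and using $(\gamma^0)^2=I$ from \eqref{eq:gamma}, one solves for $\partial_t\psi$ in terms of $\sum_a\partial_a\psi$ and the cubic term $(\psi^*H\psi)F\psi$ (or the quadratic term $(\psi^*\gamma^0\psi)e$ in the setting of \eqref{eq:3Dquadra}). Iterating this, any expression $\langle x\rangle^{|I|}\Gamma^I\psi(t_0)$ with $k$ time derivatives among the factors can be rewritten, at the cost of commutators whose coefficients are polynomial in $x$ of controlled degree (handled by Proposition~\ref{prop:commu2} and Lemma~\ref{lem:preserve}), as a sum of terms of the form $\langle x\rangle^{m}\nabla^{I'}\big(\text{products of }\nabla^{I_j}\psi\big)$ with the total number of derivatives and the weight bookkeeping so that the hypothesis \eqref{eq:thm-3Ddata} applies. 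The genuinely multilinear pieces (the nonlinearity) are then estimated by distributing the weight $\langle x\rangle^{|I|}$ across the three (resp.\ two) factors, placing the highest-derivative factor in $L^2$ with its appropriate weight and the remaining factors in $L^\infty$. For the $L^\infty$ factors one invokes Proposition~\ref{prop:K-S-2} (or Proposition~\ref{prop:K-S} at $t=t_0$): a low-order weighted derivative of $\psi$ is bounded in $L^\infty$ by weighted $L^2$ norms covered by \eqref{eq:thm-3Ddata}, each of which is $\lesssim 1$. Since $N\ge 7$, there is always enough room so that at most one factor carries more than two derivatives, and the product closes with a bound $\lesssim 1$.

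For the second estimate, $E(\widehat\Gamma^J\partial\psi,t_0)^{1/2}\lesssim\epsilon$ with $|J|\le N-1$, the structure is the same but now one extra genuine derivative $\partial$ is present. Again reducing to $\|\langle x\rangle^{|J|}\nabla^{J}\partial\psi(t_0)\|$ after trading time derivatives via the equation, the linear part is controlled directly by the third line of \eqref{eq:thm-3Ddata}, which is precisely $\sum_{|J|\le N-1}\|\langle x\rangle^{|J|}\nabla\nabla^J\psi_0\|<\epsilon$. The nonlinear contributions, having the schematic form $\partial\big((\psi^*H\psi)F\psi\big)$ and its vector-field variants, contain at least one factor of $\partial\psi$ (or, after one time derivative is eliminated, a spatial derivative of $\psi$ times lower-order factors); putting that factor in $L^2$ with its weight gives a factor bounded by $\epsilon$ from line three of \eqref{eq:thm-3Ddata}, while the remaining factors are bounded in $L^\infty$ by $\lesssim 1$ as before. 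Hence every term carries at least one power of $\epsilon$, and the total is $\lesssim\epsilon$.

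The main obstacle is the careful bookkeeping in the reduction of time derivatives: one must verify that eliminating $\partial_t$ through the Dirac equation, while commuting the weight $\langle x\rangle^{|I|}$ and the vector fields past the nonlinearity, never forces more than one factor to carry a top-order (more than two) derivative and never overspends the available weight — this is exactly what the constraint $N\ge 7$ and the Leibniz-type identities of Lemma~\ref{lem:preserve} together with Proposition~\ref{prop:commu2} are there to guarantee. Everything else is a routine Hölder-plus-Sobolev estimate.
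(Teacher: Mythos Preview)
Your argument has a genuine gap in the second estimate. When you trade $\partial_t\psi$ for spatial derivatives via the equation, the nonlinear contribution that appears is \emph{the nonlinearity itself}, not its derivative: e.g.\ in the quadratic model, $\partial_t\psi=-\gamma^0\gamma^a\partial_a\psi+i\gamma^0(\psi^*\gamma^0\psi)e$, so for $J=0$ one already faces $\|(\psi^*\gamma^0\psi)e\|\lesssim\|\psi\|_{L^\infty}\|\psi\|$, with no factor of $\partial\psi$ anywhere. Your claim that ``the nonlinear contributions \ldots\ contain at least one factor of $\partial\psi$'' is therefore false, and placing the $L^\infty$ factors at size $\lesssim 1$ (which is all that Propositions~\ref{prop:K-S} or~\ref{prop:K-S-2} give from the first line of \eqref{eq:thm-3Ddata}) leaves you with a bound of order $1$, not $\epsilon$.

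What is missing is the step that extracts smallness of $\psi$ itself in $L^\infty$ from the smallness of $\nabla\psi$ in weighted $L^2$. The paper does this with the fundamental theorem of calculus: from \eqref{eq:thm-3Ddata} and Proposition~\ref{prop:K-S} one gets $|\nabla\nabla^K\psi_0(x)|\lesssim\epsilon\langle r\rangle^{-3/2-|K|}$, and then
\[
|\nabla^K\psi_0(x)|\le\int_{|x|}^{\infty}|\partial_r\nabla^K\psi_0|\,d|y|\lesssim\epsilon\langle r\rangle^{-1/2-|K|},
\]
so in particular $\|\langle x\rangle^{|K|}\nabla^K\psi_0\|_{L^\infty}\lesssim\epsilon$ for $|K|\le N-3$. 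With this in hand, the nonlinear term $\|\langle x\rangle^{|J|}\nabla^J\big((\psi^*\gamma^0\psi)e\big)\|$ picks up a factor of $\epsilon$ from the $L^\infty$ piece, and the induction on the number of $\partial_t$'s closes. Your first estimate (boundedness) is fine in spirit, but the $\epsilon$-estimate cannot be completed without this additional ingredient.
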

\begin{proof}

\textbf{Step 1: pointwise bounds of $|\nabla^K \psi(t_0)|$ with $|K|\leq N-3$.}

We apply the Klainerman-Sobolev inequality in Proposition \ref{prop:K-S} on \eqref{eq:thm-3Ddata} to get (one can also use Proposition \ref{prop:K-S-2} to get a weaker decay rate)
\begin{align}
|\nabla \nabla^{K} \psi(t_0, x)| \lesssim \min \{\epsilon \langle r\rangle^{-3/2-|K|}, \langle r\rangle^{-5/2-|K|}\}, 
\qquad |K|\leq N-3. \label{eq:ID-decay}
\end{align}
By the fact $|\nabla^{K}\psi(t_0, r=+\infty)|=0$, the estimate \eqref{eq:ID-decay}, and the fundamental theorem of calculus, one gets
\begin{align}
|\nabla^{K}\psi(t_0, x)| 
&\leq \int_{r}^{+\infty} |\partial_r \nabla^{K}\psi(t_0, y)| \, d|y|  \notag
\\
&\leq \int_{r}^{+\infty} |\nabla \nabla^{K}\psi(t_0, y)| \, d|y|   %\label{eq:ID-decay2}
\\
&\lesssim \epsilon \int_{r}^{+\infty} \langle |y|\rangle^{-3/2-|K|} \, d|y|
\lesssim \epsilon \langle r\rangle^{-1/2-|K|}.  \notag
\end{align}
Applying again the Klainerman-Sobolev inequality on \eqref{eq:thm-3Ddata} gives
\begin{align}
|\nabla^{K}\psi(t_0, x)| 
\lesssim
\min\{  \epsilon \langle r\rangle^{-1/2-|K|}, \langle r\rangle^{-3/2-|K|}\}, \qquad \ |K|\leq N-3. \label{eq:ID-decay2}
\end{align}

\textbf{Step 2: smallness of $\|\langle x\rangle^{J}\partial_t \nabla^{J}\psi(t_0)\|$ with $|J|\leq N-1$.}

We rewrite equation \eqref{eq:3Dquadra} as
\begin{align}
\partial_t \psi = -\gamma^0 \gamma^a \partial_a \psi + i(\psi^* \gamma^0 \psi)\gamma^0 e, \label{eq:Soler-re}
\end{align}
acting $\nabla^{J}$ we get
$$
\partial_t \nabla^{J} \psi = -\gamma^0 \gamma^a \partial_a \nabla^{J} \psi + i  \nabla^{J} \big((\psi^* \gamma^0 \psi)\gamma^0 e\big).
$$
Thus, triangular inequality gives us
\begin{align}
\|\langle x\rangle^{|J|}\partial_t \nabla^{J}&  \psi(t_0) \| \nonumber 
\lesssim
\|\langle x\rangle^{|J|}\nabla \nabla^{J}\psi(t_0)\| \\
&  +  \sum_{\substack{|J_1|+|J_2|\leq |J|,\\|J_1|\leq |J|,|J_2|\leq N-3}}\|\langle x\rangle^{|J_1|} \nabla^{J_1} \psi(t_0)\| \|\langle x\rangle^{|J_2|}\nabla^{J_2} \psi(t_0)\|_{L^\infty}.  \label{eq:ID-psi-t}
\end{align}

Inserting \eqref{eq:ID-decay2} and \eqref{eq:thm-3Ddata} into \eqref{eq:ID-psi-t} gives us
\begin{align}
\|\langle x\rangle^{|J|}\partial_t \nabla^J \psi(t_0) \| 
\lesssim \epsilon.  \label{eq:ID-psi-tx}
\end{align}

\textbf{Step 3: boundedness of $\|\langle x \rangle^{|J|+1} \partial_t \nabla^{J} \psi(t_0)\|$ with $|J| \leq N-1.$} \\
We perform $\nabla^J$ to both sides of \eqref{eq:Soler-re}, then apply the triangular inequality to obtain
\begin{align*}
	\|\langle x \rangle^{|J|+1}\partial_t \nabla^{J} \psi(t_0)\|    
	\lesssim \|\langle x \rangle^{|J|+1} \nabla \nabla^{J} \psi(t_0)\| 
	 + \|\langle x \rangle^{|J|+1}\nabla^J\big((\psi^* \gamma^0 \psi)\gamma^0 e\big)(t_0)\|.
\end{align*}
Following from \eqref{eq:thm-3Ddata} and \eqref{eq:ID-decay2}, we can see
\begin{align}
	\|\langle x \rangle^{|J|+1}\partial_t \nabla^{J} \psi(t_0)\|  \lesssim 1. 
\end{align}

\textbf{Step 4: pointwise bounds of $|\partial_t \nabla^K \psi(t_0)|$ with $|K|\leq N-4$.}

We act $\nabla^K$ to \eqref{eq:Soler-re} to get
\begin{align*}
\partial_t \nabla^{K} \psi = -\gamma^0 \gamma^a \partial_a \nabla^{K} \psi + i \nabla^{K} \big((\psi^* \gamma^0 \psi)\gamma^0 e\big).
\end{align*}
Applying the triangular inequality deduces
\begin{align*}
|\partial_t \nabla^{K} \psi|
\lesssim
| \nabla \nabla^{K} \psi| + \big|\nabla^{K} \big((\psi^* \gamma^0 \psi)\gamma^0 e\big)\big|.
\end{align*}
Then the pointwise estimates in \eqref{eq:ID-decay2}  yield
\begin{align}
|\partial_t \nabla^{K} \psi(t_0)|
\lesssim
\min\{  \epsilon \langle r\rangle^{-1/2-|K|-1}, \langle r\rangle^{-3/2-|K|-1}\}. \label{eq:ID-decay4}
\end{align}

\textbf{Step 5: Conclusion.} \\
Note that we have obtained 
\begin{align*}
	\|\langle x \rangle^{|I|+1} \partial \nabla^I \psi(t_0)\|  & \lesssim 1, \ \ \forall \ |I| \leq N-1,  \\
	\|\langle x \rangle^{|J|}\partial \nabla^J \psi(t_0) \|  & \lesssim \epsilon, \ \ \forall \ |J|\leq N-1, \\
	\big|\partial \nabla^K \psi(t_0) \big| & \lesssim \min\{  \epsilon \langle r\rangle^{-1/2-|K|-1}, \langle r\rangle^{-3/2-|K|-1}\}, \ \forall \ |K|\leq N-4.
\end{align*}
Then one can repeat the above procedure and apply  an induction argument to show 
\begin{align}
	\|\langle x \rangle^{|I|} \partial^I \psi(t_0)\| & \lesssim 1,  \ \ \forall \ |I|\leq N, \\
	\|\langle x \rangle^{|J|} \partial \partial^J \psi(t_0)\| & \lesssim \epsilon, \ \ \forall \  |J|\leq N-1.
\end{align}
 
The proof is completed.
\end{proof}

Considering the Cauchy problem of Dirac equations \eqref{eq:3Dquadra},  we make the following bootstrap assumptions for $t \in [t_0, T)$
\begin{equation}\label{eq:BA3d}
\aligned
E(\widehat{\Gamma}^I \psi, t )^{1/2} &\leq C_1,
\qquad |I|\leq N,
\\
 E(\widehat{\Gamma}^J \partial \psi, t )^{1/2} &\leq C_1 \epsilon,
\qquad |J|\leq N-1,
\endaligned
\end{equation}
	in which $C_1 \gg 1$ is a large constant to be determined, and $\epsilon$ is sufficiently small such that $C_1 \epsilon^{1/4} \ll 1$. In the above, we define
	\begin{equation}\label{eq:T-3d}
	T = \sup \{s: \eqref{eq:BA3d} \text{ holds}\},
	\end{equation}
and clearly $T > t_0$. If $T = +\infty$, then the solution $\psi$ exists globally, and we are done. Our strategy is to first assume $T<+\infty$,  and then to deduce a contradiction, and thus $T$ must be $+\infty$.

	Combined with commutator estimates, we have the following bounds.
\begin{proposition}	\label{prop:L2-3d}
	Let $\psi$ satisfy the estimates in \eqref{eq:BA3d}, then for all $t\in [t_0, T)$ one has
	$$
	\aligned
	&\sum_{|I|\leq N}\| \widehat{\Gamma}^I \psi \| \leq C_1,
	\\
	&\sum_{|J|\leq N-1} \| \widehat{\Gamma}^J \partial \psi \| + \sum_{|J|\leq N-1} \| \partial\widehat{\Gamma}^J \psi \| \lesssim C_1 \epsilon.
	\endaligned
	$$
	\end{proposition}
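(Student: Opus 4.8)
The plan is to deduce Proposition \ref{prop:L2-3d} from the bootstrap assumptions \eqref{eq:BA3d} together with the commutator estimates of Proposition \ref{prop:commu2} and the relation between $\widehat\Gamma$ and $\Gamma$. The first inequality $\sum_{|I|\leq N}\|\widehat\Gamma^I\psi\|\leq C_1$ is essentially immediate: the ghost weight energy $E(\phi,t)$ controls $\|\phi(t)\|^2$ (it is the first term in \eqref{eq:DiracE}), so $\|\widehat\Gamma^I\psi\|\leq E(\widehat\Gamma^I\psi,t)^{1/2}\leq C_1$ for each fixed $|I|\leq N$. One should be slightly careful that the claimed bound has constant exactly $C_1$ rather than $C\cdot C_1$; since the number of multi-indices $I$ with $|I|\le N$ is a fixed finite number, strictly speaking one gets $\lesssim C_1$, but as the statement itself already writes $\lesssim$ in the second line and the first line is used only through $\lesssim$ later, this is harmless — alternatively one absorbs the combinatorial factor by the convention that the bootstrap constant is large. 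I would just note $\|\widehat\Gamma^I\psi\|\le E(\widehat\Gamma^I\psi,t)^{1/2}\le C_1$ termwise and sum.

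For the second line, first I would treat $\sum_{|J|\leq N-1}\|\widehat\Gamma^J\partial\psi\|$. Here the point is that $\widehat\Gamma^J\partial\psi$ is not literally one of the quantities $\widehat\Gamma^{J'}\partial\psi$ appearing in \eqref{eq:BA3d}, because the derivative $\partial$ sits on the inside; but by Proposition \ref{prop:commu2} (second inequality) we have $|\widehat\Gamma^J\partial\psi|\le|\partial\widehat\Gamma^J\psi| + |[\widehat\Gamma^J,\partial]\psi| \lesssim \sum_{|J'|\le|J|}\sum_\beta|\partial_\beta\widehat\Gamma^{J'}\psi|$, and here $\partial$ is the full spacetime gradient. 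Wait — one must be careful that $\partial$ includes $\partial_t$, which is again not directly bootstrapped; however $\partial_t\widehat\Gamma^{J'}\psi$ is handled by the Dirac equation itself. Actually the cleanest route is to first bound $\sum_{|J|\le N-1}\|\partial\widehat\Gamma^J\psi\|$ and then transfer. Let me reorganize: I would first establish $\sum_{|J|\le N-1}\|\partial\widehat\Gamma^J\psi\|\lesssim C_1\epsilon$, then use commutators to get $\sum_{|J|\le N-1}\|\widehat\Gamma^J\partial\psi\|\lesssim C_1\epsilon$ from it.

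To bound $\sum_{|J|\le N-1}\|\partial\widehat\Gamma^J\psi\|$: the spatial part $\sum_a\|\partial_a\widehat\Gamma^J\psi\|$ is controlled because $\partial_a\widehat\Gamma^J\psi = \widehat\Gamma^J\partial_a\psi + [\partial_a,\widehat\Gamma^J]\psi$, and $\widehat\Gamma^J\partial_a\psi$ is exactly of the form $\widehat\Gamma^J\partial\psi$ bounded by $E(\widehat\Gamma^J\partial\psi,t)^{1/2}\le C_1\epsilon$ from \eqref{eq:BA3d} — hmm, but again the commutator term reintroduces $\partial_t$. The honest fix: $\partial_t$ is controlled via equation \eqref{eq:Soler-re}, $\partial_t\widehat\Gamma^J\psi = -\gamma^0\gamma^a\partial_a\widehat\Gamma^J\psi + (\text{nonlinear, lower order in }\epsilon)$, so $\|\partial_t\widehat\Gamma^J\psi\|$ is bounded by $\sum_a\|\partial_a\widehat\Gamma^J\psi\|$ plus a term that is quadratic in $\psi$ hence (using the pointwise decay available from Klainerman–Sobolev applied to the energy bounds) of size $\lesssim C_1\epsilon$ or better. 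Thus I would run a short induction on $|J|$: at each level, bound all spatial derivatives of $\widehat\Gamma^J\psi$ by \eqref{eq:BA3d} plus commutators (which involve strictly fewer $\widehat\Gamma$'s, handled at the previous induction step, or are of the same order with one spatial derivative, directly in \eqref{eq:BA3d}), then recover $\partial_t$ from the equation. The main obstacle — and the only real content — is this bookkeeping of how $\partial_t$ on a $\widehat\Gamma^J\psi$ is recovered from the equation without losing the $\epsilon$-smallness; everything else is the purely mechanical interplay of Proposition \ref{prop:commu2}, the $\widehat\Gamma\leftrightarrow\Gamma$ equivalence, and the fact that $E^{1/2}$ dominates the plain $L^2$ norm. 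Once $\sum_{|J|\le N-1}(\|\partial\widehat\Gamma^J\psi\| + \|\widehat\Gamma^J\partial\psi\|)\lesssim C_1\epsilon$ is in hand, the proposition follows. I would end with \emph{This completes the proof.}
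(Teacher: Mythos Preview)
Your proposal contains a genuine confusion about what the bootstrap assumption \eqref{eq:BA3d} actually controls, and this leads you into an unnecessary detour with a real gap.

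First, you have the two quantities reversed. In \eqref{eq:BA3d} the assumption is on $E(\widehat\Gamma^J\partial\psi,t)$, i.e.\ with $\partial$ applied to $\psi$ first and $\widehat\Gamma^J$ on the outside; so $\|\widehat\Gamma^J\partial\psi\|\leq E(\widehat\Gamma^J\partial\psi,t)^{1/2}\leq C_1\epsilon$ is \emph{immediate} for every $|J|\leq N-1$. Moreover, in this paper $\partial$ denotes an arbitrary spacetime derivative (see Section~\ref{subsec:Notation}), so this already includes $\partial_t$. There is therefore nothing to ``recover from the equation''. The quantity that needs work is $\|\partial\widehat\Gamma^J\psi\|$, and that follows from the commutator estimate of Proposition~\ref{prop:commu2},
\[
\|\partial_\alpha\widehat\Gamma^J\psi\|\ \leq\ \|\widehat\Gamma^J\partial_\alpha\psi\|\ +\ \big\|[\partial_\alpha,\widehat\Gamma^J]\psi\big\|\ \lesssim\ C_1\epsilon\ +\ \sum_{|J'|<|J|}\sum_\beta\|\partial_\beta\widehat\Gamma^{J'}\psi\|,
\]
together with an induction on $|J|$ (the base case $|J|=0$ being $\|\partial\psi\|\leq C_1\epsilon$ directly from \eqref{eq:BA3d}). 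This is what the paper means by ``combined with commutator estimates''.

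Second, your detour through the equation \eqref{eq:Soler-re} to control $\|\partial_t\widehat\Gamma^J\psi\|$ does not work as stated. You claim the nonlinear contribution is ``of size $\lesssim C_1\epsilon$ or better'', but at this point in the argument the only available pointwise bound on $\psi$ itself (via Klainerman--Sobolev on the \emph{first} line of \eqref{eq:BA3d}) is $|\widehat\Gamma^K\psi|\lesssim C_1\langle t+r\rangle^{-1}\langle t-r\rangle^{-1/2}$, with no factor of $\epsilon$. For the quadratic nonlinearity $(\psi^*\gamma^0\psi)e$ this gives $\|\widehat\Gamma^L((\psi^*\gamma^0\psi)e)\|\lesssim C_1^2\langle t\rangle^{-1}$, which is not $\lesssim C_1\epsilon$ uniformly on $[t_0,T)$. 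Getting an $\epsilon$-small pointwise bound on $\psi$ requires the interpolation argument of Lemma~\ref{smalldecayesti3d}, which in turn relies on Proposition~\ref{prop:pointwise-3d} and hence on the very estimate $\|\partial\widehat\Gamma^J\psi\|\lesssim C_1\epsilon$ you are trying to prove --- so invoking it here would be circular.

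In short: drop the appeal to the equation, use that $\partial$ already includes $\partial_t$ in the bootstrap, and run the commutator induction directly.
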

	
	By the Klainerman-Sobolev inequality in Proposition \ref{prop:K-S}, we have the following pointwise estimates on $\psi$.
	\begin{proposition}\label{prop:pointwise-3d}
	Let $\psi$ satisfy the estimates in \eqref{eq:BA3d}, then for all $t\in [t_0, T)$ the following holds
	\begin{align}
	&\sum_{|I|\leq N-2}| \widehat{\Gamma}^I \psi | \lesssim C_1 \langle t+r\rangle^{-1} \langle t-r\rangle^{-1/2},  \label{eq:point-3d-1}
	\\
	&\sum_{|J|\leq N-3} | \widehat{\Gamma}^J \partial \psi | + \sum_{|J|\leq N-3} | \partial \widehat{\Gamma}^J \psi | 
	\lesssim C_1 \epsilon \langle t+r\rangle^{-1} \langle t-r\rangle^{-1/2}. \label{eq:point-3d-2}
	\end{align}
	\end{proposition}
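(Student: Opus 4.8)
The statement to prove is Proposition~\ref{prop:pointwise-3d}, which extracts pointwise decay for $\psi$ and its derivatives from the weighted $L^2$ bootstrap bounds. This is a direct application of the Klainerman--Sobolev inequality (Proposition~\ref{prop:K-S}) combined with the commutator estimates already assembled, so the plan is short.

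\textbf{Plan of proof.} First I would establish \eqref{eq:point-3d-1}. The Klainerman--Sobolev inequality for $\mathbb{C}^{2^{d-1}}$-valued functions states that $|\phi(t,x)| \lesssim \langle t+r\rangle^{-1}\langle t-r\rangle^{-1/2} \sum_{|K|\leq 2} \|\widehat\Gamma^K \phi\|$ in $d=3$. Applying this with $\phi = \widehat\Gamma^I \psi$ for $|I|\leq N-2$, we get
\begin{align*}
|\widehat\Gamma^I \psi(t,x)| \lesssim \langle t+r\rangle^{-1}\langle t-r\rangle^{-1/2} \sum_{|K|\leq 2} \|\widehat\Gamma^K \widehat\Gamma^I \psi\|.
\end{align*}
Since $|K|+|I| \leq N$, the sum on the right is controlled by $\sum_{|I'|\leq N}\|\widehat\Gamma^{I'}\psi\|$, which by Proposition~\ref{prop:L2-3d} is $\leq C_1$ (up to the implicit constant; strictly $\lesssim C_1$). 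Summing over $|I|\leq N-2$ gives \eqref{eq:point-3d-1}.

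\textbf{The derivative estimates.} For \eqref{eq:point-3d-2} I would argue in two pieces. For $\sum_{|J|\leq N-3}|\partial \widehat\Gamma^J \psi|$, apply Klainerman--Sobolev to $\phi = \partial\widehat\Gamma^J\psi$, producing $\langle t+r\rangle^{-1}\langle t-r\rangle^{-1/2}\sum_{|K|\leq 2}\|\widehat\Gamma^K \partial \widehat\Gamma^J\psi\|$; then commute $\widehat\Gamma^K$ past $\partial$ using Proposition~\ref{prop:commu2} (the estimate $|[\partial_\alpha,\widehat\Gamma^I]f| \leq C\sum_{|J|<|I|}\sum_\beta |\partial_\beta \widehat\Gamma^J f|$) to bound everything by $\sum_{|J'|\leq N-1}\|\partial\widehat\Gamma^{J'}\psi\|$, which Proposition~\ref{prop:L2-3d} controls by $C_1\epsilon$. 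For the other term $\sum_{|J|\leq N-3}|\widehat\Gamma^J \partial\psi|$, one uses the equivalence between $\sum_{|J|\leq L}|\widehat\Gamma^J\partial\psi|$ and $\sum_{|J|\leq L}|\partial\widehat\Gamma^J\psi|$ coming from the same commutator estimates (this equivalence is already invoked in Proposition~\ref{prop:L2-3d}, where both $\sum\|\widehat\Gamma^J\partial\psi\|$ and $\sum\|\partial\widehat\Gamma^J\psi\|$ appear), so this piece reduces to the previous one. Summing over $|J|\leq N-3$ finishes the proof.

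\textbf{Main obstacle.} There is no real obstacle here — the proposition is a bookkeeping consequence of the bootstrap assumptions. The only point requiring a little care is the index accounting: one must check that acting with the two Klainerman--Sobolev vector fields $\widehat\Gamma^K$ ($|K|\leq 2$) on $\widehat\Gamma^I\psi$ or on $\partial\widehat\Gamma^J\psi$ keeps the total order within the range ($N$ and $N-1$ respectively) where Proposition~\ref{prop:L2-3d} supplies a bound, which is exactly why the pointwise estimates hold only up to order $N-2$ (resp.\ $N-3$) rather than $N$ (resp.\ $N-1$). Hence the proposition follows. \qed
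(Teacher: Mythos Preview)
Your proposal is correct and matches the paper's approach: the paper itself does not write out a proof for this proposition but simply states that it follows from the Klainerman--Sobolev inequality (Proposition~\ref{prop:K-S}) applied to the $L^2$ bounds in Proposition~\ref{prop:L2-3d}, which is exactly the argument you spell out. Your index accounting and use of the commutator estimates are appropriate, so there is nothing to add.
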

	
	\subsection{Improved estimates}
	
	Our goal in this section is to prove improved energy bounds for the solution $\psi$, which are listed in the following proposition.
\begin{proposition}	\label{prop:improved-3d}
	If the estimates in \eqref{eq:BA3d} hold, for all $t\in [t_0, T)$ we have
	\begin{equation}
	\aligned
E(\widehat{\Gamma}^I \psi, t )^{1/2} &\leq {1\over 2} C_1,
\qquad |I|\leq N,
\\
 E(\widehat{\Gamma}^J \partial \psi, t )^{1/2} &\leq {1\over 2} C_1 \epsilon,
\qquad |J|\leq N-1.
\endaligned
	\end{equation}
\end{proposition}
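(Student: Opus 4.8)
The plan is to run a standard energy-estimate bootstrap: we feed the bootstrap assumptions \eqref{eq:BA3d} into the energy inequality of Proposition \ref{prop:DiracEE} applied to $\widehat{\Gamma}^I \psi$ (and to $\widehat{\Gamma}^J \partial \psi$), bound the resulting spacetime nonlinear integral using the pointwise decay of Propositions \ref{prop:pointwise-3d} and \ref{prop:L2-3d}, and check that the time integral converges with a factor that is strictly smaller than the constant $C_1$ (resp. $C_1\epsilon$). Throughout, the initial-data bounds of Lemma \ref{lem:ID-3d} provide the $O(1)$ (resp. $O(\epsilon)$) contribution $E(\widehat{\Gamma}^I\psi, t_0)^{1/2}$, which we absorb into $\tfrac14 C_1$ by taking $C_1$ large; the remaining room $\tfrac14 C_1$ must come from smallness in $\epsilon$.

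First I would treat the top-order estimate for $E(\widehat{\Gamma}^I \psi, t)^{1/2}$, $|I|\le N$. By Proposition \ref{prop:DiracEE} and the commutator relation \eqref{eq:vectorfield} together with the structure-preserving identity in Lemma \ref{lem:preserve} (here the nonlinearity is $(\psi^*\gamma^0\psi)e$, so $\Gamma^{I}(\psi^*\gamma^0\psi) = \sum_{I_1+I_2=I}(\widehat{\Gamma}^{I_1}\psi)^*\gamma^0\widehat{\Gamma}^{I_2}\psi$), we get
\begin{align*}
E(\widehat{\Gamma}^I \psi, t )^{1/2}
&\lesssim E(\widehat{\Gamma}^I \psi, t_0 )^{1/2}
 + \int_{t_0}^t \sum_{|J|\le|I|}\big\| \widehat{\Gamma}^J\big((\psi^*\gamma^0\psi)e\big) \big\|\, ds \\
&\lesssim 1 + \int_{t_0}^t \sum_{\substack{|I_1|+|I_2|\le N}} \big\| (\widehat{\Gamma}^{I_1}\psi)\,(\widehat{\Gamma}^{I_2}\psi)\,(\widehat{\Gamma}^{I_3}\psi)\big\|\, ds,
\end{align*}
where one of the three factors has at most $\lfloor N/2\rfloor \le N-3$ vector fields on it (since $N\ge 7$) and can be put in $L^\infty$, a second factor similarly, and the remaining one in $L^2$. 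Placing the two low-order factors in $L^\infty$ via \eqref{eq:point-3d-1} gives a decay $\langle t+r\rangle^{-2}\langle t-r\rangle^{-1}$, which after the spatial $L^\infty\times L^\infty$ pairing and the $L^2$ bound $\|\widehat{\Gamma}^{I}\psi\|\le C_1$ leaves a time integral $\int_{t_0}^t \langle s\rangle^{-1}(\cdots)\,ds$; one of the low-order factors should instead be estimated in the \emph{small} norm, i.e.\ one uses $|\partial\widehat{\Gamma}^J\psi|\lesssim C_1\epsilon\langle t+r\rangle^{-1}\langle t-r\rangle^{-1/2}$ or the extra-decay Proposition \ref{prop:extradecay} so that each nonlinear contribution carries a genuine $\epsilon$. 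The key arithmetic point: the cubic structure means the nonlinear integral is bounded by $C_1^3\epsilon^{?}\int_{t_0}^t \langle s\rangle^{-1-\delta}\,ds \lesssim C_1^3 \epsilon^{?}$, and since $C_1\epsilon^{1/4}\ll 1$ one has $C_1^3\epsilon^{?}\ll C_1$, closing the bound at $\tfrac12 C_1$. I would carefully track where the integrability in $s$ comes from — it is a combination of the $\langle t-r\rangle$ weights from the pointwise bounds and the ghost-weight spacetime term $\int \langle\tau-r\rangle^{-1-2\delta}|[\psi]_-|^2$, which one exploits via Lemma \ref{lem:decompose} to peel a factor $[\widehat{\Gamma}^{I}\psi]_-$ out of $\psi^*\gamma^0\psi$ and pair it against the ghost-weight term by Cauchy--Schwarz in spacetime.

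Next I would do the lower-order estimate for $E(\widehat{\Gamma}^J\partial\psi, t)^{1/2}$, $|J|\le N-1$. Commuting $\partial$ through the Dirac operator produces (via Proposition \ref{prop:commu1}, $[S,-i\gamma^\mu\partial_\mu]=i\gamma^\mu\partial_\mu$, so a harmless $\partial\psi$-type term and lower order) a source of the schematic form $\partial\big((\psi^*\gamma^0\psi)e\big)$ plus lower-order commutators; differentiating the cubic term gives $(\partial\psi)\cdot\psi\cdot\psi$ plus permutations, and here \emph{every} term already contains a derivative, hence a factor $C_1\epsilon$ from \eqref{eq:point-3d-2} or from Proposition \ref{prop:L2-3d}, so the nonlinear integral is bounded by $C_1^3\epsilon^{2}\int_{t_0}^t\langle s\rangle^{-1-\delta}ds \ll C_1\epsilon$, closing at $\tfrac12 C_1\epsilon$. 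The cubic case is genuinely easier than the quadratic case flagged in the text because the extra power of $\psi$ supplies an extra power of $\langle t+r\rangle^{-1}$, so time integrability is never in doubt; the only care needed is to route enough $\epsilon$'s and to use Proposition \ref{prop:extradecay}, $\langle t-r\rangle|\partial u|\lesssim\sum_{|J|\le1}|\widehat{\Gamma}^J u|$, whenever an extra $\langle t-r\rangle^{-1}$ is required to make a borderline integral converge.

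The main obstacle I anticipate is the top-order $|I|=N$ estimate, precisely because there one cannot afford to put the high-order factor anywhere but in $L^2$, and the two remaining low-order factors carry at most $N-3 \ge 4$ vector fields each but must \emph{together} supply both enough spatial decay to win the $dx$ integral and enough $\epsilon$ to beat $C_1$. Concretely, one needs at least one of them to be hit by a derivative (to access the $\epsilon$-small bounds) while the other still gives the $\langle t-r\rangle^{-1}$ needed for time integrability — so the argument hinges on the decomposition $\psi = \tfrac12([\psi]_+ + [\psi]_-)$ combined with Lemma \ref{lem:decompose}, extracting the good component $[\,\cdot\,]_-$ and spending the ghost-weight energy. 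If that pairing is done bluntly one loses a logarithm in $t$; the fix is to split the spacetime integral into the interior region $\{|x|\le t/2\}\cup\{|x|\ge 2t\}$, where $\langle t-r\rangle\sim\langle t+r\rangle$ gives extra decay for free, and the wave-zone $\{t/2\le|x|\le 2t\}$, where Lemma \ref{lem:D-Point-improve} gives the improved $\langle t\rangle^{-1}$ pointwise bound on $[\psi]_-$. Once these regions are handled separately the $\epsilon$-powers and the $C_1$-powers balance as above, the bootstrap constants improve from $C_1$ to $\tfrac12 C_1$, and Proposition \ref{prop:improved-3d} follows.
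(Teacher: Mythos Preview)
Your overall strategy (energy estimate of Proposition~\ref{prop:DiracEE}, pointwise decay, ghost weight, and the $[\,\cdot\,]_-$ decomposition of Lemma~\ref{lem:decompose}) matches the paper's, but there is a genuine gap: you are treating the nonlinearity as cubic, when Proposition~\ref{prop:improved-3d} concerns the \emph{quadratic} model \eqref{eq:3Dquadra}. You correctly write the source as $(\psi^*\gamma^0\psi)e$, but your displayed integrand then has \emph{three} factors $(\widehat{\Gamma}^{I_1}\psi)(\widehat{\Gamma}^{I_2}\psi)(\widehat{\Gamma}^{I_3}\psi)$, and for the $\partial\psi$ estimate you write ``$(\partial\psi)\cdot\psi\cdot\psi$''. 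In the quadratic case only two $\psi$-factors appear in $\|\widehat{\Gamma}^L F\|$, so only one can go in $L^\infty$; the resulting time decay is $\langle s\rangle^{-1}$, which is exactly borderline, and your remark that ``time integrability is never in doubt'' fails here.

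The specific missing idea is how to inject \emph{smallness} into the top-order estimate for $E(\widehat{\Gamma}^I\psi)$. There is no $\partial$ in the source $(\psi^*\gamma^0\psi)e$, so your plan to invoke the small bound \eqref{eq:point-3d-2} on $|\partial\widehat{\Gamma}^J\psi|$ does not apply directly. The paper's fix is Lemma~\ref{smalldecayesti3d}: interpolate between $|\partial\widehat{\Gamma}^I\psi|\lesssim C_1\langle t+r\rangle^{-1}\langle t-r\rangle^{-3/2}$ (from \eqref{eq:point-3d-1} and Proposition~\ref{prop:extradecay}) and the small bound \eqref{eq:point-3d-2}, then integrate along rays via the fundamental theorem of calculus to obtain $|\widehat{\Gamma}^I\psi|\lesssim C_1\epsilon^{1/4}\langle t+r\rangle^{-1+\delta/2}\langle t-r\rangle^{-1/4}$ for $|I|\le N-3$. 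This supplies the needed $\epsilon^{1/4}$ without any derivative being present in the integrand. For the $E(\widehat{\Gamma}^J\partial\psi)$ estimate, the quadratic source $\partial(\psi^*\gamma^0\psi)$ carries only one derivative, and to close the paper couples your region-splitting idea (made precise as Lemma~\ref{lem:refineghost}) with a nontrivial use of Hardy's inequality (Proposition~\ref{ineq:hardy}) and H\"older interpolation to convert $\|\langle r\rangle^{-\theta}\widehat{\Gamma}^{K}\psi\|$ into a small power of $\|\partial_r\widehat{\Gamma}^{K}\psi\|$; see \eqref{ineq:weightl2}--\eqref{ineq:esti5}. Your sketch anticipates the ghost-weight pairing and the improved $[\psi]_-$ decay but does not identify either the interpolation-plus-integration step of Lemma~\ref{smalldecayesti3d} or the Hardy step, and without them the quadratic bootstrap does not close.
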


	To prove Proposition \ref{prop:improved-3d}, we need to prepare some bounds on $\psi$. First, we note the pointwise estimates for $\widehat{\Gamma}^I \psi$  (with $|I|\leq N-2$) in Proposition \ref{prop:pointwise-3d} do not enjoy any smallness in $\epsilon$, which causes difficulty in showing Proposition \ref{prop:improved-3d}. We now show that $ \widehat{\Gamma}^I \psi$ (with $|I|\leq N-3$) are actually small in the pointwise sense although they are large measured in $L^2$ norm.
	\begin{lemma} \label{smalldecayesti3d}
	Let $0< \delta \ll {1/8} $ be a small parameter, then we have
	\begin{align}
		\sum_{|I|\leq N-3}| \widehat{\Gamma}^I \psi | \lesssim \min \{ C_1 \epsilon^{1/2} \langle t+r\rangle^{-1+\delta} ,  C_1 \epsilon^{1/4}\langle t+r \rangle^{-1+\delta/2}\langle t-r \rangle^{-1/4}\}.   \label{eq:point-3d-3}
\end{align}
	\end{lemma}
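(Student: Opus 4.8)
The plan is to obtain the two bounds for $\widehat{\Gamma}^I\psi$ ($|I|\le N-3$) by the same device used for the initial data in Lemma \ref{lem:ID-3d}: first establish a pointwise bound on a spatial derivative $|\partial_r \widehat{\Gamma}^I\psi|$ that decays fast enough in $r$ and carries a power of $\epsilon$, and then integrate from $r=\infty$ inward. The nontrivial point is which derivative to differentiate: since only $\partial\widehat{\Gamma}^J\psi$ with $|J|\le N-3$ is controlled with an $\epsilon$ in Propositions \ref{prop:L2-3d}--\ref{prop:pointwise-3d}, I will use the bound \eqref{eq:point-3d-2}, namely $|\partial\widehat{\Gamma}^J\psi|\lesssim C_1\epsilon\langle t+r\rangle^{-1}\langle t-r\rangle^{-1/2}$ for $|J|\le N-3$, together with the coarse non-small bound \eqref{eq:point-3d-1}, $|\widehat{\Gamma}^I\psi|\lesssim C_1\langle t+r\rangle^{-1}\langle t-r\rangle^{-1/2}$ for $|I|\le N-2$.

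The first step is interpolation. For a fixed time $t$ and fixed $|I|\le N-3$, on the ray $s\mapsto \widehat{\Gamma}^I\psi(t,s\omega)$ I combine the $\epsilon$-small derivative bound with the non-small bound to produce an intermediate pointwise estimate for $\partial_r\widehat{\Gamma}^I\psi$ that decays like $\langle t+r\rangle^{-1-\sigma}$ (with a genuine negative power of $r$ when $t$ is bounded, or a comparable decay in $\langle t+r\rangle$) and carries a fractional power of $\epsilon$. Concretely, writing $|\partial\widehat{\Gamma}^I\psi|$ as a geometric mean $|\partial\widehat{\Gamma}^I\psi|^\theta|\partial\widehat{\Gamma}^I\psi|^{1-\theta}$ of the two available bounds — one with $\epsilon$ and one with an extra derivative counted so as to gain $\langle r\rangle^{-1}$ via Proposition \ref{prop:extradecay} / the extra-decay inequality — yields something of the shape $|\partial_r\widehat{\Gamma}^I\psi|\lesssim C_1\epsilon^{\theta}\langle t+r\rangle^{-1-\theta'}\langle t-r\rangle^{-\cdots}$. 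Choosing $\theta$ appropriately (roughly $\theta=1/2$ for the first alternative and $\theta=1/4$ for the second, matching the exponents $\epsilon^{1/2}$ and $\epsilon^{1/4}$ and the $\delta$-losses in the statement) gives the integrand I need.

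The second step is the fundamental theorem of calculus: since $\widehat{\Gamma}^I\psi(t,r=+\infty)=0$ (guaranteed by the weighted-$L^2$ control, exactly as in Step 1 of Lemma \ref{lem:ID-3d}), I write
\begin{align*}
|\widehat{\Gamma}^I\psi(t,x)| \le \int_r^{+\infty} |\partial_r\widehat{\Gamma}^I\psi(t,y)|\,d|y|,
\end{align*}
plug in the interpolated bound, and carry out the radial integral; the extra negative power of $\langle t+y\rangle$ from interpolation is what makes the integral converge and produces the final $\langle t+r\rangle^{-1+\delta}$ (resp.\ $\langle t+r\rangle^{-1+\delta/2}\langle t-r\rangle^{-1/4}$) after absorbing a harmless $\langle t+r\rangle^{\delta}$ or $\langle t+r\rangle^{\delta/2}$ loss coming from the $\langle t-r\rangle$ weights (using $\langle t-r\rangle\le\langle t+r\rangle$ on part of the range and splitting the integral at $|y|\sim t$). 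Taking the minimum of the two resulting estimates gives \eqref{eq:point-3d-3}.

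The main obstacle I expect is bookkeeping the $\langle t-r\rangle$ weights through the radial integration: near the light cone $\langle t-r\rangle$ is small, so naive integration of $\langle t-r\rangle^{-1/2}$ in $|y|$ is fine, but combining the two regimes $|y|\le t$ and $|y|\ge t$ and keeping the correct balance between the gain in $\epsilon$ and the $\delta$-loss in $\langle t+r\rangle$ requires care — this is precisely why the statement only claims $\langle t+r\rangle^{-1+\delta}$ rather than $\langle t+r\rangle^{-1}$, and why $\delta\ll 1/8$ is imposed. A secondary point to check is that the interpolation is legitimate at the top of the allowed range $|I|\le N-3$: one needs $\partial\widehat{\Gamma}^I\psi$ with $|I|\le N-3$ to be controlled both in the small-$L^\infty$ sense (from \eqref{eq:point-3d-2}, which needs $|J|\le N-3$, so this is tight) and to have enough extra regularity to invoke the extra-decay inequality, which costs one more vector field; since the non-small bound \eqref{eq:point-3d-1} holds up to $|I|\le N-2$, there is exactly one derivative of room, so the argument closes.
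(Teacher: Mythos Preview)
Your plan is essentially the paper's: apply Proposition~\ref{prop:extradecay} to \eqref{eq:point-3d-1} to get $|\partial\widehat{\Gamma}^I\psi|\lesssim C_1\langle t+r\rangle^{-1}\langle t-r\rangle^{-3/2}$ for $|I|\le N-3$, interpolate with \eqref{eq:point-3d-2} at $\theta=1/2$ to obtain $|\partial\widehat{\Gamma}^I\psi|\lesssim C_1\epsilon^{1/2}\langle t+r\rangle^{-1+\delta}\langle t-r\rangle^{-1-\delta}$, and integrate radially from infinity to reach the first bound $C_1\epsilon^{1/2}\langle t+r\rangle^{-1+\delta}$. (One slip: the gain from Proposition~\ref{prop:extradecay} is a factor $\langle t-r\rangle^{-1}$, not $\langle r\rangle^{-1}$; this is in fact what you use.)

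For the second bound, though, the paper does \emph{not} redo the integration at $\theta=1/4$. It simply interpolates the first bound just obtained with the non-small Klainerman--Sobolev estimate \eqref{eq:point-3d-1} (valid up to $|I|\le N-2$) at exponent $1/2$, yielding $C_1\epsilon^{1/4}\langle t+r\rangle^{-1+\delta/2}\langle t-r\rangle^{-1/4}$ in one line. Your proposed $\theta=1/4$ route does not quite close as written: deep inside the cone (say $r\le t/2$) the radial integral $\int_r^\infty\langle t-|y|\rangle^{-5/4}\,d|y|$ is only $O(1)$, so you would end up with $C_1\epsilon^{1/4}\langle t+r\rangle^{-1+\delta/2}$ there, missing the extra $\langle t-r\rangle^{-1/4}$ factor --- and that factor matters, since for large $t$ the second alternative is the operative one in that region. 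The fix is immediate once you have the first bound: interpolate it directly with \eqref{eq:point-3d-1}.
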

	\begin{proof}
Let $|I|\leq N-3$. Owing to Proposition \ref{prop:extradecay} and \eqref{eq:point-3d-1}, one can see
\begin{align}
	|\partial \widehat{\Gamma}^I \psi|(t,x) \lesssim C_1 \langle t+r \rangle^{-1} \langle t-r \rangle^{-3/2}.
\end{align}
 By interpolating the above bounds with the pointwise bounds in \eqref{eq:point-3d-2}, we get 
 \begin{align}
 	|\partial \widehat{\Gamma}^I \psi|(t,x)& \lesssim C_1 \epsilon^{1/2}\langle t+r \rangle^{-1 } \langle t-r \rangle^{-1} \\
 	    & \lesssim C_1 \epsilon^{1/2}\langle t+r \rangle^{-1+\delta } \langle t-r \rangle^{-1-\delta} .
 \end{align}
Since $\widehat{\Gamma}^I \psi(t, |x|=\infty) = 0$, we can use the fundamental theorem of calculus to obtain
\begin{align}
	|\widehat{\Gamma}^I\psi|(t,x) \leq \int_{|x|}^{+\infty} |\partial_r \widehat{\Gamma}^I\psi(t,y)|d|y| \lesssim C_1 \epsilon^{1/2}\langle t+r \rangle^{-1+\delta },
\end{align}
which combining with \eqref{eq:point-3d-1} implies 
\begin{align}
	|\widehat{\Gamma}^I\psi|(t,x) \lesssim C_1 \epsilon^{1/4}\langle t+r \rangle^{-1+\delta/2}\langle t-r \rangle^{-1/4}.
\end{align}
The proof is completed.
\end{proof}

\begin{lemma}\label{lem:refineghost}
	Let $0<\delta \ll 1/8$ be a small parameter, then we have 
	\begin{align}
		\sum_{|I| \leq N- 3 }\big|[\widehat{\Gamma}^I\psi]_{-} \big| \lesssim C_1 \epsilon^{1/4} \langle t+ r\rangle^{-5/4+\delta/2},  \label{eq:point-3d-6a}
		\\
		\sum_{|J| \leq N- 4 }\big|[\widehat{\Gamma}^J \partial\psi]_{-} \big| \lesssim C_1 \epsilon \langle t+ r\rangle^{-5/4}. \label{eq:point-3d-6b}   
	\end{align}
\end{lemma}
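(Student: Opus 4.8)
The plan is to exploit the definition $[\Psi]_\pm = \Psi \pm \frac{x_a}{r}\gamma^0\gamma^a\Psi$ together with the improved decay estimates already in hand. For \eqref{eq:point-3d-6a}, the key observation is that $[\widehat\Gamma^I\psi]_-$ vanishes at spatial infinity (since $\widehat\Gamma^I\psi$ does), so by the fundamental theorem of calculus along the radial direction, $|[\widehat\Gamma^I\psi]_-|(t,x) \le \int_{|x|}^{+\infty} |\partial_r [\widehat\Gamma^I\psi]_-(t,y)|\,d|y|$. Hence it suffices to bound $|\partial [\widehat\Gamma^I\psi]_-|$ with a rate that is integrable in $r$ and produces $\langle t+r\rangle^{-5/4+\delta/2}$ upon integration. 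I would split into the interior/wave-zone region $t/2 \le |x| \le 2t$ and its complement. In the wave zone, Lemma \ref{lem:D-Point-improve} applies: $|\partial[\widehat\Gamma^I\psi]_-| \lesssim \langle t\rangle^{-1}\sum_{|K|\le |I|+1}|\widehat\Gamma^K\psi| + |\widehat\Gamma^I F|$, where (in the quadratic case) $F = (\psi^*\gamma^0\psi)e$; with $|I|\le N-3$, the term $\sum_{|K|\le N-2}|\widehat\Gamma^K\psi|$ is controlled by \eqref{eq:point-3d-1} and, for the lower-order piece, one wants the small bound \eqref{eq:point-3d-3}, while $|\widehat\Gamma^I F| \lesssim \sum |\widehat\Gamma^{I_1}\psi||\widehat\Gamma^{I_2}\psi|$ with at least one factor of low order, also small by \eqref{eq:point-3d-3}. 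Outside the wave zone one has $\langle t+r\rangle \sim \langle t-r\rangle$ up to constants, and the crude estimate $|\partial[\widehat\Gamma^I\psi]_-| \lesssim |\partial\widehat\Gamma^I\psi| + \langle t+r\rangle^{-1}|\widehat\Gamma^I\psi|$ combined with Proposition \ref{prop:extradecay} and the interpolated bounds from Lemma \ref{smalldecayesti3d} suffices, since there the extra $\langle t-r\rangle^{-1}$ weight is as good as $\langle t+r\rangle^{-1}$.

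For the radial integration to produce the claimed exponent, the budget is as follows: I would aim to show $|\partial[\widehat\Gamma^I\psi]_-|(t,y) \lesssim C_1\epsilon^{1/4}\langle t+|y|\rangle^{-9/4+\delta/2}$ (or with any distribution of the total power summing to $-9/4+\delta/2$ between $\langle t+|y|\rangle$ and a $\langle t-|y|\rangle$ weight that does not hurt), so that $\int_{r}^{+\infty}\langle t+|y|\rangle^{-9/4+\delta/2}\,d|y| \lesssim \langle t+r\rangle^{-5/4+\delta/2}$. Tracking the factors: $\langle t\rangle^{-1}$ from the boost-identity in Lemma \ref{lem:D-Point-improve} times $C_1\epsilon^{1/4}\langle t+r\rangle^{-1+\delta/2}\langle t-r\rangle^{-1/4}$ from \eqref{eq:point-3d-3} gives $C_1\epsilon^{1/4}\langle t+r\rangle^{-2+\delta/2}\langle t-r\rangle^{-1/4}$, which in the wave zone is $\lesssim C_1\epsilon^{1/4}\langle t+r\rangle^{-9/4+\delta/2}$ after converting $\langle t-r\rangle^{-1/4}$—wait, that overshoots, so in fact one has room to spare; the nonlinear contribution $|\widehat\Gamma^I F|$ behaves like $(C_1\langle t+r\rangle^{-1}\langle t-r\rangle^{-1/2})(C_1\epsilon^{1/4}\langle t+r\rangle^{-1+\delta/2}\langle t-r\rangle^{-1/4})$ which is even better. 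So the wave-zone estimate is comfortable; the only care is the region $r$ small or $t$ small, where everything is bounded and trivial, and the region $r \gg t$, handled as above.

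For \eqref{eq:point-3d-6b}, the structure is the same but one level of derivative higher, so I use $|I|\le N-4$ and apply Lemma \ref{lem:D-Point-improve} to $\partial\widehat\Gamma^J\psi$ (which solves a Dirac equation with source $\partial\widehat\Gamma^J F$ plus commutator terms, all controlled), or equivalently apply it to $\widehat\Gamma^J\psi$ and then commute $\partial$ through $[\cdot]_-$ using Proposition \ref{prop:commu2}. In the wave zone, $|\partial[\widehat\Gamma^J\partial\psi]_-| \lesssim \langle t\rangle^{-1}\sum_{|K|\le N-3}|\widehat\Gamma^K\partial\psi| + |\partial\widehat\Gamma^J F| \lesssim C_1\epsilon\langle t\rangle^{-1}\langle t+r\rangle^{-1}\langle t-r\rangle^{-1/2} + (\text{nonlinear})$ by \eqref{eq:point-3d-2}; integrating $\langle t+|y|\rangle^{-2}\langle t-|y|\rangle^{-1/2}$-type bounds in $|y|$ yields $\langle t+r\rangle^{-5/4}$ with room to spare, and outside the wave zone Proposition \ref{prop:extradecay} plus \eqref{eq:point-3d-2} again suffice because $\langle t-r\rangle \sim \langle t+r\rangle$ there. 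The main obstacle, and the place requiring the most care, is bookkeeping the region decomposition and making sure the lower-order factors in the nonlinearity $\widehat\Gamma^I((\psi^*\gamma^0\psi)e)$ are always the ones receiving the small pointwise bound \eqref{eq:point-3d-3}—since $|I|\le N-3$ (resp.\ $N-4$) and the small estimate \eqref{eq:point-3d-3} holds up to order $N-3$, there is just enough regularity room, but one must verify at least one factor always has order $\le N-3$; this follows from the Leibniz-type splitting $|I_1|+|I_2|\le N-3$ forcing $\min(|I_1|,|I_2|)\le \lfloor(N-3)/2\rfloor \le N-3$, so it goes through.
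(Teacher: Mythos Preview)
Your overall strategy (fundamental theorem of calculus, wave-zone decomposition, Lemma~\ref{lem:D-Point-improve}) is the same as the paper's, but there is a genuine gap in how you extract the factor $\epsilon^{1/4}$ in \eqref{eq:point-3d-6a}. When Lemma~\ref{lem:D-Point-improve} is applied to $\widehat\Gamma^I\psi$ with $|I|\le N-3$, the resulting sum $\langle t\rangle^{-1}\sum_{|K|\le|I|+1}|\widehat\Gamma^K\psi|$ reaches order $|K|=N-2$, and for that top-order term only \eqref{eq:point-3d-1} is available---no smallness. Your ``Tracking the factors'' paragraph then invokes \eqref{eq:point-3d-3} as if it covered the whole sum, which it does not (it stops at $N-3$). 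The paper closes this gap by an interpolation you omit: from Lemma~\ref{lem:D-Point-improve} one has $|\partial_r[\widehat\Gamma^I\psi]_-|\lesssim C_1\langle t+r\rangle^{-2+\delta/2}\langle t-r\rangle^{-1/2}$ (good decay, no smallness), while directly $|\partial_r[\widehat\Gamma^I\psi]_-|\lesssim|\partial\widehat\Gamma^I\psi|\lesssim C_1\epsilon\langle t+r\rangle^{-1}\langle t-r\rangle^{-1/2}$ by \eqref{eq:point-3d-2} (smallness, less decay); interpolating with weights $3/4,\,1/4$ gives $C_1\epsilon^{1/4}\langle t+r\rangle^{-7/4+3\delta/8}\langle t-r\rangle^{-1/2}\lesssim C_1\epsilon^{1/4}\langle t+r\rangle^{-5/4+\delta/2}\langle t-r\rangle^{-1-\delta/8}$, which then integrates radially to the claim.

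A smaller point: you integrate from $|x|$ to $+\infty$, which forces you to also estimate $|\partial_r[\widehat\Gamma^I\psi]_-|$ for $|y|>2t$, where Lemma~\ref{lem:D-Point-improve} is unavailable. The paper instead handles $\{r\ge 2t\}\cup\{r\le t/2\}$ directly via $|[\widehat\Gamma^I\psi]_-|\le 2|\widehat\Gamma^I\psi|$ together with \eqref{eq:point-3d-3} (using $\langle t-r\rangle\sim\langle t+r\rangle$ there), and inside the wave zone integrates only from $r=2t$ to $|x|$, with the boundary value $|[\widehat\Gamma^I\psi]_-|(t,2t)$ again controlled by \eqref{eq:point-3d-3}. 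For \eqref{eq:point-3d-6b} your sketch is essentially correct: with $|J|\le N-4$ the sum from Lemma~\ref{lem:D-Point-improve} stays at order $\le N-3$ and \eqref{eq:point-3d-2} supplies the $\epsilon$ directly, so no interpolation is needed.
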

\begin{proof}
\textbf{Step 1: proof of \eqref{eq:point-3d-6a}.}

	Fix $|I| \leq N- 3$. In the region $\{r\geq 2t\} \cup \{r \leq t/2\}$, we can see from \eqref{eq:point-3d-3} that
	\begin{align*}
		|[\widehat{\Gamma}^I \psi]_{-}|(t,x) \lesssim C_1\epsilon^{1/4} \langle t + r\rangle^{-5/4+\delta/2}.
	\end{align*}
%	By interpolating with \eqref{eq:point-3d-3}, one can get
%	\begin{align}
%		|[\widehat{\Gamma}^I \psi]_{-}| \lesssim C_1 \epsilon^{1/4} \langle t+ r\rangle^{-5/4+\delta/2}.
%	\end{align}
Now it suffices to bound $[\widehat{\Gamma}^I\psi]_{-}$ in the domain $\{t/2 \leq r \leq 2t\}$. For this purpose, we apply Lemma \ref{lem:D-Point-improve} to obtain 
\begin{align}
	\big|\partial_{r}[\widehat{\Gamma}^I \psi]_{-} \big|(t,x)  & \lesssim \frac{1}{\langle t +r \rangle} \sum_{|J|\leq 1} |\widehat{\Gamma}^J \widehat{\Gamma}^I \psi| + |\widehat{\Gamma}^I \big( (\psi^*\gamma^0\psi) e \big)|  \nonumber \\
	& \lesssim C_1 \langle t+r\rangle^{-2+\delta/2} \langle t-r\rangle^{-1/2},  \label{eq:ghostdecay1}
\end{align}
in which we use the estimates in Proposition \ref{prop:pointwise-3d} and \eqref{eq:point-3d-3}, and the fact $C_1 \epsilon^{1/4} \ll 1$.
	Interpolating between \eqref{eq:ghostdecay1} and \eqref{eq:point-3d-2}, one can have 
	\begin{align*}
		|\partial_r [\widehat{\Gamma}^I \psi]_{-}| & \lesssim C_1 \epsilon^{1/4}\langle t+r \rangle^{-7/4+3\delta/8} \langle t-r\rangle^{-1/2}  \nonumber \\
		& \lesssim C_1 \epsilon^{1/4}\langle t+r \rangle^{-5/4 + \delta/2} \langle t-r\rangle^{-1-\delta/8}.
	\end{align*} 
	Noticing that $|[\widehat{\Gamma}^I \psi]_{-}|(t, r=2t) \lesssim C_1 \epsilon^{1/4} \langle t+ r\rangle^{-5/4+\delta/2}$, we have 
	\begin{align*}
		\big|[\widehat{\Gamma}^I \psi]_{-}\big|(t,x)  &\leq |\widehat{\Gamma}^I \psi|(t, r=2t) + \int_{2t}^{|x|}|\partial_r [\widehat{\Gamma}^I \psi]_{-}|(t, y) d|y| \nonumber \\
		& \lesssim C_1 \epsilon^{1/4} \langle t+ r\rangle^{-5/4+\delta/2}.
	\end{align*}
	
\textbf{Step 2: proof of \eqref{eq:point-3d-6b}.}	
	
Fix $|J|\leq N-4$. By \eqref{eq:point-3d-2}, we know \eqref{eq:point-3d-6b} holds in the region $\{r\geq 2t\} \cup \{r \leq t/2\}$. Thus, below we only focus on the region 	$\{t/2 \leq r \leq 2t\}$. 
By Lemma \ref{lem:D-Point-improve}, we have
\begin{align}
	\big|\partial_{r}[\widehat{\Gamma}^J \partial \psi]_{-} \big|(t,x)  & \lesssim \frac{1}{\langle t +r \rangle} \sum_{|K|\leq 1} |\widehat{\Gamma}^K \widehat{\Gamma}^J \partial \psi| + |\widehat{\Gamma}^J \partial \big( (\psi^*\gamma^0\psi) e \big)|  \nonumber \\
	& \lesssim C_1 \epsilon \langle t+r\rangle^{-2+\delta/2} \langle t-r\rangle^{-1/2},  \label{eq:ghostdecay2}
\end{align}	
	in which we use the estimates in Proposition \ref{prop:pointwise-3d} and \eqref{eq:point-3d-3}, and the fact $C_1 \epsilon^{1/4} \ll 1$.
	By \eqref{eq:point-3d-2}, we know $|[\widehat{\Gamma}^J \partial \psi]_{-}|(t, r=2t) \lesssim C_1 \epsilon \langle t+ r\rangle^{-3/2}$, we have 
	\begin{align*}
		\big|[\widehat{\Gamma}^J \partial \psi]_{-}\big|(t,x)  &\leq |\widehat{\Gamma}^J \partial \psi|(t, r=2t) + \int_{2t}^{|x|}|\partial_r [\widehat{\Gamma}^J \partial \psi]_{-}|(t, y) d|y| \nonumber \\
		& \lesssim C_1 \epsilon \langle t+ r\rangle^{-5/4}.
	\end{align*}
	
	The proof is done.
\end{proof}

	Next, we use energy estimates to improve the bounds in \eqref{eq:BA3d}. We act $\widehat{\Gamma}^I$ and $\widehat{\Gamma}^J  \partial$ to the equation \eqref{eq:3Dquadra} and apply \eqref{eq:vectorfield}  to get
	\begin{align}
	-i\gamma^\mu \del_\mu \widehat{\Gamma}^I \psi &= \sum_{|L| \leq |I|} c_{L,I}\widehat{\Gamma}^L \big((\psi^* \gamma^0 \psi) e \big), 
	\quad |I|\leq N, \label{eq:Dirac-3d-1}
\\
-i\gamma^\mu \del_\mu \widehat{\Gamma}^J \partial \psi &= \sum_{|K|\leq |J|} c_{K,J}\widehat{\Gamma}^K \partial \big((\psi^* \gamma^0 \psi) e \big), 
\quad |J|\leq N-1. \label{eq:Dirac-3d-2}
	\end{align}
	
	\begin{proposition}	\label{prop:improved-3d-rough}
	If the estimates in \eqref{eq:BA3d} hold, for all $t\in [t_0, T)$ we have
	\begin{align}
	E(\widehat{\Gamma}^I \psi, t ) &\lesssim  1 + C_1^3 \epsilon^{1/4},
\qquad |I|\leq N,  \label{eq:3d-rough1}
\\
 E(\widehat{\Gamma}^J  \partial \psi, t ) &\lesssim \epsilon^2 + C_1^{3} \epsilon^{17/8},
\qquad |J|\leq N-1.  \label{eq:3d-rough2}
	\end{align}

\end{proposition}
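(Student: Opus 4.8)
The plan is to run the ghost-weight energy inequality of Proposition~\ref{prop:DiracEE} on the commuted equations \eqref{eq:Dirac-3d-1} and \eqref{eq:Dirac-3d-2}. Since Lemma~\ref{lem:ID-3d} already supplies $E(\widehat{\Gamma}^I\psi,t_0)\lesssim 1$ and $E(\widehat{\Gamma}^J\partial\psi,t_0)\lesssim\epsilon^2$, everything reduces to bounding the spacetime integrals $\int_{t_0}^t\!\!\int_{\R^3}\big|(\widehat{\Gamma}^I\psi)^*\gamma^0\,\widehat{\Gamma}^L\big((\psi^*\gamma^0\psi)e\big)\big|\,dxds$ for $|L|\le|I|\le N$, and the analogue with $\widehat{\Gamma}^J\partial\psi$ and $\widehat{\Gamma}^K\partial\big((\psi^*\gamma^0\psi)e\big)$ for $|K|\le|J|\le N-1$. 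Into these I would feed Lemma~\ref{lem:preserve} to distribute the vector fields (the constant vectors $\widehat{\Gamma}^{L_2}e,\widehat{\Gamma}^{K_2}e$ are bounded) and then Lemma~\ref{lem:decompose} to expose the $\gamma^0$ null structure; up to commutators controlled by Proposition~\ref{prop:commu2}, the integrands are dominated by sums of trilinear terms of the shape $|\widehat{\Gamma}^I\psi|\,\big|[\widehat{\Gamma}^{I_1}\psi]_-\big|\,\big|\widehat{\Gamma}^{I_2}\psi\big|$ and the symmetric one carrying $[\widehat{\Gamma}^{I_2}\psi]_-$, with $|I_1|+|I_2|\le|I|$; for the $\widehat{\Gamma}^J\partial\psi$ energies the same holds with one extra $\partial$ landing on one inner factor and $|I_1|+|I_2|\le|J|$. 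The key point is that every such term keeps exactly one good factor $[\,\cdot\,]_-$.

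\textbf{The case split.} The heart of the proof is a case analysis on these trilinear terms by order. Since $N\ge 7$, at least one inner factor always has order $\le N-3$, hence enjoys the pointwise decay of Proposition~\ref{prop:pointwise-3d}/Lemma~\ref{smalldecayesti3d}, and by Lemma~\ref{lem:refineghost} the low-order good factors obey the \emph{global} bounds $\big|[\widehat{\Gamma}^{I_1}\psi]_-\big|\lesssim C_1\epsilon^{1/4}\langle t+r\rangle^{-5/4+\delta/2}$ and $\big|[\widehat{\Gamma}^{I_1}\partial\psi]_-\big|\lesssim C_1\epsilon\langle t+r\rangle^{-5/4}$, whose $L^\infty_x$ norms are time-integrable. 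Accordingly: (i) if the low-order factor is the $[\,\cdot\,]_-$ one, place it in $L^\infty_x$ via Lemma~\ref{lem:refineghost} and the other two in $L^2_x$ via Proposition~\ref{prop:L2-3d}; (ii) if the low-order factor is not the $[\,\cdot\,]_-$ one, place it in $L^\infty_x$ and Cauchy--Schwarz the remaining two, routing the $[\,\cdot\,]_-$ factor into the ghost-weight integral $\int_{t_0}^t\!\!\int\langle s-r\rangle^{-1-2\delta}|[\,\cdot\,]_-|^2\le E(\,\cdot\,,t)$, bounded by $C_1^2$ (resp. $C_1^2\epsilon^2$) thanks to \eqref{eq:BA3d}. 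In (ii) one must carry \emph{two} powers of the low-order $L^\infty$ factor: for a $\psi$-type low factor, $\langle s-r\rangle^{1+2\delta}\big|\widehat{\Gamma}^{I_2}\psi\big|^2\lesssim C_1^2\epsilon^{1/2}\langle s+r\rangle^{-3/2+3\delta}$ (using $\langle s-r\rangle\le\langle s+r\rangle$), which is bounded and time-integrable, so the $s$-Cauchy--Schwarz against $\int\!\!\int\langle s-r\rangle^{-1-2\delta}|[\,\cdot\,]_-|^2$ converges, whereas carrying only one power leaves a divergent time integral. Counting one $C_1$ per factor from \eqref{eq:BA3d}, one $\epsilon$ per $\partial\psi$-type factor from Proposition~\ref{prop:L2-3d}, and the residual $\epsilon^{1/4}$ (or $\epsilon^{1/8}$ under a square root) from the low-order decay, the $\widehat{\Gamma}^I\psi$ integrals come out $\lesssim C_1^3\epsilon^{1/4}$ and the $\widehat{\Gamma}^J\partial\psi$ ones $\lesssim\epsilon^2+C_1^3\epsilon^{17/8}$; combining with Lemma~\ref{lem:ID-3d} yields \eqref{eq:3d-rough1}--\eqref{eq:3d-rough2}.

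\textbf{Main obstacle.} The hard part is the bookkeeping in the top-order part of the case split, where exactly one factor of the trilinear term decays pointwise while the other two carry nothing beyond their $L^2$ bounds, so all of the time integrability must be squeezed out of the ghost weight; this is precisely why the null structure $\psi^*\gamma^0\psi\lesssim|[\psi]_-|\,|\psi|$ of Lemma~\ref{lem:decompose} together with the squared-low-factor device is indispensable, and why one cannot be wasteful about whether a factor sits in $L^\infty_x$ as a single power or as the square root of two. A minor further point is that $\widehat{\Gamma}$ includes the translations, so all orders and commutator remainders have to be tracked consistently through Proposition~\ref{prop:commu2}; and I note that the quadratic model treated here is genuinely the harder one — the cubic nonlinearity $(\psi^*H\psi)F\psi$ has an extra factor, hence extra decay, and after the identical reductions closes with more room to spare.
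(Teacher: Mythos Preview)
Your proposal is correct and, for the first estimate \eqref{eq:3d-rough1}, essentially matches the paper's argument: both distribute vector fields via Lemma~\ref{lem:preserve}, expose the $\gamma^0$ null structure via Lemma~\ref{lem:decompose}, and then split according to which inner factor is low-order, feeding the high-order $[\,\cdot\,]_-$ factor into the ghost-weight spacetime integral.

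For the second estimate \eqref{eq:3d-rough2} your route genuinely differs from the paper's. The paper first pulls out $\|\widehat{\Gamma}^J\partial\psi\|$ by Cauchy--Schwarz in $x$ and then, when the high-order inner factor is a $\psi$-type term (no smallness in $L^2$), invokes Hardy's inequality (Proposition~\ref{ineq:hardy}) interpolated with H\"older to squeeze out an extra $\epsilon^{1/8}$ from $\|\langle r\rangle^{-1/8}\widehat{\Gamma}^{K}\psi\|\lesssim\|\partial_r\widehat{\Gamma}^{K}\psi\|^{1/8}\|\widehat{\Gamma}^{K}\psi\|^{7/8}$; this is the source of the exponent $17/8$. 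You avoid Hardy entirely: you keep the trilinear form in $L^1_{x,t}$, and when the $[\,\cdot\,]_-$ factor is low-order you place it in $L^\infty_x$ via Lemma~\ref{lem:refineghost} (whose global $\langle t+r\rangle^{-5/4}$ decay is time-integrable), while when it is high-order you do a spacetime Cauchy--Schwarz carrying \emph{two} powers of the low-order pointwise bound against the ghost-weight integral. Your device is more elementary and in fact yields slightly better exponents (your worst term is $C_1^3\epsilon^{9/4}$ rather than $C_1^3\epsilon^{17/8}$); the paper's Hardy trick, on the other hand, extracts smallness directly from the $L^2$ side and would generalize to settings where the sharp $[\partial\psi]_-$ pointwise bound of Lemma~\ref{lem:refineghost} is unavailable. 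One small bookkeeping point: the bound \eqref{eq:point-3d-6b} requires $|J|\le N-4$, not $N-3$, so your case (i) for the $[\partial\psi]_-$ factor only covers orders $\le N-4$; but since $|K_1|+|K_2|\le N-1$, whenever $|K_1|\ge N-3$ the companion factor satisfies $|K_2|\le 2\le N-3$, so case (ii) picks it up.
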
	
	\begin{proof}
	\textbf{Step 1: Proof of \eqref{eq:3d-rough1}.} Let $|I| \leq N$,
	we apply the energy estimates  in Proposition \ref{prop:DiracEE} on \eqref{eq:Dirac-3d-1} to get for all $t\in [t_0, T)$ that
	\begin{align*}
	E(\widehat{\Gamma}^I \psi, t )
	&\lesssim E(\widehat{\Gamma}^I \psi, t_0 ) 
	+ \sum_{|L| \leq |I|}\int_{t_0}^t \big\| \widehat{\Gamma}^I \psi^* \gamma^0  \widehat{\Gamma}^L \big((\psi^* \gamma^0 \psi)  e\big)  \big\|_{L^1} \, ds
	\\
	&\lesssim
	1 +\sum_{|L| \leq |I|} \int_{t_0}^t \big\| \widehat{\Gamma}^I \psi\big\| \big\| \widehat{\Gamma}^L\big((\psi^* \gamma^0 \psi)  e\big)  \big\| \, ds.
	\end{align*}
	The Leibniz rule and Lemma \ref{lem:decompose} imply that  
	\begin{align*}
	\sum_{|L|\leq |I|}\big\| \widehat{\Gamma}^L\big((\psi^* \gamma^0 \psi)  e\big)  \big\|
	& \lesssim
	\sum_{|L| \leq |I|} \big\| \Gamma^L\big((\psi^* \gamma^0 \psi)  e\big)  \big\|
	\\
	\lesssim
	&\sum_{\substack{|I_1| \leq |I| \\|I_2| \leq N-3}} \bigg\| \frac{ [\widehat{\Gamma}^{I_1} \psi ]_{-}} {\langle s -r \rangle^{\frac{1}{2}+\delta}} \bigg\| 
	     \big\| \langle s -r \rangle^{\frac{1}{2}+\delta}\widehat{\Gamma}^{I_2} \psi \big\|_{L^\infty}  \\
	     & \qquad + \sum_{\substack{|I_2| \leq |I| \\|I_1| \leq N-3}} \big\|  [\widehat{\Gamma}^{I_1} \psi ]_{-} \big\|_{L^{\infty}} 
	     \big\| \widehat{\Gamma}^{I_2} \psi \big\|,
	\end{align*}
and then by 	the $L^2$-type estimates in Proposition \ref{prop:L2-3d} and pointwise estimates in \eqref{eq:point-3d-3} and \eqref{eq:point-3d-6a}, we deduce that
	\begin{align*}
	& \sum_{|L|\leq |I|}\big\| \widehat{\Gamma}^L\big((\psi^* \gamma^0 \psi) e\big)  \big\| \\
	& \lesssim
	 C_1 \epsilon^{1/4} \langle t\rangle^{-3/4+3\delta/2} \sum_{|I_1| \leq |I| } \bigg\| \frac{ [\widehat{\Gamma}^{I_1} \psi ]_{-}} {\langle s -r \rangle^{\frac{1}{2}+\delta}} \bigg\| + C_1^2 \epsilon^{1/4} \langle t\rangle^{-5/4+\delta/2}.
	\end{align*}
	Thus, we have
	\begin{align*}
		E(\widehat{\Gamma}^I \psi, t )
\lesssim 1 & + C_1^3 \epsilon^{1/4} \int_{t_0}^t \langle s\rangle^{-5/4+\delta/2} \, ds \\
    &  + C_1^2 \epsilon^{1/4} \sum_{|I_1| \leq |I| } \bigg(\int_{t_0}^t \bigg\| \frac{ [\widehat{\Gamma}^{I_1} \psi ]_{-}} {\langle s -r \rangle^{\frac{1}{2}+\delta}} \bigg\|^2 ds\bigg)^{1/2} \bigg(\int_{t_0}^t\langle s \rangle^{-3/2+3\delta}ds\bigg)^{1/2}.
	\end{align*}
	Then, we apply the bootstrap assumption \eqref{eq:BA3d} to get 
	\begin{align*}
		E(\widehat{\Gamma}^I \psi, t ) \lesssim  1 + C_1^3\epsilon^{1/4}.
	\end{align*}
	This completes the proof of \eqref{eq:3d-rough1}.
	
		\textbf{Step 2: Proof of \eqref{eq:3d-rough2}.} 
		
	Fix $|J| \leq N-1$. To show the second estimate in Proposition \ref{prop:improved-3d-rough}, we apply Proposition \ref{prop:DiracEE} on \eqref{eq:Dirac-3d-2} to derive for all $t\in [t_0, T)$ that
	\begin{align}
	E(\widehat{\Gamma}^J \partial \psi, t )
	&\lesssim E(\widehat{\Gamma}^J \partial\psi, t_0 ) 
	+ \sum_{|K| \leq |J|}\int_{t_0}^t \big\| \widehat{\Gamma}^J \partial\psi^* \gamma^0  \widehat{\Gamma}^K \partial \big((\psi^* \gamma^0 \psi) e\big)  \big\|_{L^1} \, ds \nonumber
	\\
	&\lesssim
	\epsilon^2 + \sum_{|K| \leq |J|} \int_{t_0}^t \big\| \widehat{\Gamma}^J \partial \psi\big\| \big\| \widehat{\Gamma}^K \partial \big((\psi^* \gamma^0 \psi) e\big)  \big\| \, ds. \label{eq:energyder}
	\end{align}
We rely on the Leibniz rule and Lemma \ref{lem:decompose} to deduce
\begin{align*}
	& \sum_{|K| \leq |J|}\big\| \widehat{\Gamma}^K\partial \big((\psi^* \gamma^0 \psi) e\big)  \big\|
	\\
		\lesssim
	&\sum_{\substack{ |K_1|+|K_2| \leq |J| }} \big\| |[\widehat{\Gamma}^{K_1} \partial \psi]_-|
	     |\widehat{\Gamma}^{K_2} \psi | \big\|
	     \\
	     +&\sum_{\substack{ |K_1|+|K_2| \leq |J| }} \big\| |\widehat{\Gamma}^{K_1} \partial \psi|
	     |[\widehat{\Gamma}^{K_2} \psi]_- | \big\|
	     \\
	     =& A_1 + A_2.
	    \end{align*}
	    
	    Next, we bound $A_1, A_2$ separately.
	    
	    For $A_1$, we have
	    \begin{align}
	    A_1
	    \lesssim
	    &\sum_{\substack{ |K_1|+|K_2| \leq |J| \\ |K_2|\leq N-3}} \bigg\| {[\widehat{\Gamma}^{K_1} \partial \psi]_-\over \langle s-r\rangle^{1/2+\delta}} \bigg\|
	     \big\|\langle s-r\rangle^{1/2+\delta} \widehat{\Gamma}^{K_2} \psi  \big\|_{L^\infty} \nonumber
	     \\
	     + &\sum_{\substack{|K_1|+|K_2| \leq |J| \\ |K_2|\leq N-4}} \big\|\langle r \rangle^{-1/8} \widehat{\Gamma}^{K_1} \psi \big\|  \big\|\langle r \rangle^{1/8} [\widehat{\Gamma}^{K_2} \partial \psi]_-  \big\|_{L^\infty}. \label{ineq:esti00}
	    \end{align}
	    In view of Proposition \ref{eq:holder} and Hardy's inequality(see Proposition \ref{ineq:hardy}),  we can find 
	    \begin{align}
	    \big\|\langle r \rangle^{-1/8} \widehat{\Gamma}^{K_1} \psi \big\| & \leq \big\|\langle r \rangle^{-1} \widehat{\Gamma}^{K_1} \psi \big\|^{1/8} \big\| \widehat{\Gamma}^{K_1} \psi \big\|^{7/8}  \nonumber \\
	    & \lesssim \big\|\partial_r \widehat{\Gamma}^{K_1} \psi \big\|^{1/8} \big\| \widehat{\Gamma}^{K_1} \psi \big\|^{7/8}.   \label{ineq:weightl2}
	    \end{align}
	    Now applying the $L^2$ estimates in Proposition \ref{prop:L2-3d} and \eqref{ineq:weightl2},  one can see 
	    \begin{align} \label{ineq:l2esti2}
	    	 \big\|\langle r \rangle^{-1/8} \widehat{\Gamma}^{K_1} \psi \big\| \lesssim  C_1 \epsilon^{1/8}.
	    \end{align}
	    By inserting the estimates \eqref{ineq:l2esti2} into \eqref{ineq:esti00} and using Lemmas \ref{smalldecayesti3d} and \ref{lem:refineghost}, we get
	    \begin{align}
	    A_1
	    \lesssim
	    &C_1^2 \epsilon^{9/8} \langle s\rangle^{-9/8}  +  C_1 \epsilon^{1/4} \langle s\rangle^{-3/4+3\delta/2} \sum_{\substack{ |K_1| \leq |J| }} \bigg\| {[\widehat{\Gamma}^{K_1} \partial \psi]_-\over \langle s-r\rangle^{1/2+\delta}} \bigg\|. \label{ineq:esti7}
	    \end{align}
	    
	    For $A_2$, we note
	    \begin{align*}
	    A_2
	 \lesssim
	    &\sum_{\substack{ |K_1|+|K_2| \leq |J| \\ |K_2|\leq N-3}} \bigg\| {[\widehat{\Gamma}^{K_1} \psi]_-\over \langle r \rangle^{1/4} \langle s-r\rangle^{1/2}} \bigg\|
	     \big\|\langle r \rangle^{1/4} \langle s-r\rangle^{1/2} \widehat{\Gamma}^{K_2} \partial \psi  \big\|_{L^\infty}
	     \\
	     + &\sum_{\substack{|K_1|+|K_2| \leq |J| \\ |K_2|\leq N-3}} \big\| \widehat{\Gamma}^{K_1} \partial \psi \big\|  \big\|[\widehat{\Gamma}^{K_2} \psi]_-  \big\|_{L^\infty}.
	     \end{align*}
	     Thanks to Proposition \ref{eq:holder} and Hardy's inequality, one can get
	     \begin{align}
	     	\bigg\| {[\widehat{\Gamma}^{K_1} \psi]_-\over \langle r \rangle^{1/4} \langle s-r\rangle^{1/2}}  \bigg\| & \leq \bigg \|{[\widehat{\Gamma}^{K_1} \psi]_{-} \over \langle r \rangle } \bigg\|^{1/4} \bigg \|{[\widehat{\Gamma}^{K_1} \psi]_{-} \over \langle s-r \rangle^{2/3} }  \bigg\|^{3/4}  \nonumber \\
	     	& \lesssim \|\partial_r [\widehat{\Gamma}^{K_1} \psi]_{-} \|^{1/4}\bigg \|{[\widehat{\Gamma}^{K_1} \psi]_{-} \over \langle s-r \rangle^{1/2+\delta} }  \bigg\|^{3/4}.  \label{ineq:esti3}
	     \end{align}
	     Recall that $[\widehat{\Gamma}^{K_1} \psi]_{-} = \widehat{\Gamma}^{K_1} \psi - (x_a/r) \gamma^0\gamma^a \widehat{\Gamma}^{K_1} \psi $, by a direct calculation, one can find
	     \begin{align} \label{ineq:esti4}
	     	\|\partial_r [\widehat{\Gamma}^{K_1} \psi]_{-}\| \lesssim \|\nabla \widehat{\Gamma}^{K_1} \psi\|+ \bigg\|\frac{\widehat{\Gamma}^{K_1} \psi}{r} \bigg\| \lesssim \|\nabla \widehat{\Gamma}^{K_1} \psi\|.
	     \end{align}
	  Inserting \eqref{ineq:esti4} into  \eqref{ineq:esti3}, and using estimates in Proposition \ref{prop:L2-3d}, we get 
	  \begin{align}
	  	\bigg\| {[\widehat{\Gamma}^{K_1} \psi]_-\over \langle r \rangle^{1/4} \langle s-r\rangle^{1/2}}  \bigg\| & \lesssim  C_1^{1/4}\epsilon^{1/4}\bigg \|{[\widehat{\Gamma}^{K_1} \psi]_{-} \over \langle s-r \rangle^{1/2+\delta} }  \bigg\|^{3/4}. \label{ineq:esti5}
	  \end{align}
	   Then \eqref{ineq:esti5}, Proposition \ref{prop:pointwise-3d} and Lemma \ref{lem:refineghost} can imply
   \begin{align}
     A_2 \lesssim C_1^{5/4}\epsilon^{5/4} \langle t \rangle^{-3/4}\sum_{|K_1|\leq |J|}\bigg \|{[\widehat{\Gamma}^{K_1} \psi]_{-} \over \langle s-r \rangle^{1/2+\delta} }  \bigg\|^{3/4} + C_1^2 \epsilon^{5/4}\langle t \rangle^{-5/4+\delta/2}. \label{ineq:esti6}
   \end{align}	    
	    
	    Substituting  the estimates in \eqref{ineq:esti7} and \eqref{ineq:esti6} into \eqref{eq:energyder}, we  get 
	    \begin{align}
	    	E(\widehat{\Gamma}^J \partial \psi, t ) &\lesssim \epsilon^2 + C_1^3 \epsilon^{\frac{17}{8}} +C_1^3\epsilon^{\frac 94} + C_1^2\epsilon^{\frac 54} \sum_{|K_1|\leq |J|}\int_{t_0}^t \langle s \rangle^{-\frac 34+ \frac{3\delta}{2}} \bigg\| {[\widehat{\Gamma}^{K_1} \partial \psi]_-\over \langle s-r\rangle^{\frac 12+\delta}} \bigg\|ds  \nonumber \\
	    	& \qquad \qquad + C_1^{\frac 94}\epsilon^{\frac 94} \sum_{|K_1|\leq |J|} \int_{t_0}^t \langle s \rangle^{-\frac 34} \bigg \|{[\widehat{\Gamma}^{K_1} \psi]_{-} \over \langle s-r \rangle^{1/2+\delta} }  \bigg\|^{\frac 34}ds.
	    \end{align}
	  H\"older's inequality and bootstrap assumption \eqref{eq:BA3d} immediately yield
	  \begin{align}
	  	E(\widehat{\Gamma}^J \partial \psi, t ) &\lesssim \epsilon^2 + C_1^3 \epsilon^{\frac{17}{8}}.
	  \end{align}   		
	
	The proof is completed.
	\end{proof}
	
	\begin{proof}[Proof of Proposition \ref{prop:improved-3d} and global existence] %Theorem \ref{thm:existence}]
	Relying on  Proposition \ref{prop:improved-3d-rough}, we can deduce the estimates in Proposition \ref{prop:improved-3d} by letting $C_1$ sufficiently large and $\epsilon$ very small such that $C_1 \epsilon^{1/8} \ll {1/4}$.
	
	Since the energy is continuous in time, the estimates in Proposition \ref{prop:improved-3d} implies that there exists $\delta_1>0$ such that for all $t\in [t_0, T+\delta_1]$ it holds
	\begin{align*}
E(\widehat{\Gamma}^I \psi, t )^{1/2} &\leq C_1,
\qquad |I|\leq N,
\\
 E(\widehat{\Gamma}^J  \partial\psi, t )^{1/2} &\leq C_1 \epsilon,
\qquad |J|\leq N-1.
	\end{align*}
	We know this contradicts to the definition of $T$ in \eqref{eq:T-3d}. Consequently $T=+\infty$, and thus the solution to the Cauchy problem \eqref{eq:Soler}-\eqref{eq:ID} exists globally as long as $\epsilon$ very small. 
	The pointwise estimates in \eqref{eq:thm-3ddecay} follows from \eqref{eq:point-3d-1}.
	\end{proof}

\subsection{Scattering in 3D}

We next show the solution $\psi$ scatters linearly in $H^N(\R^3)$. By Lemma \ref{lem:scatter}, we only need to bound
\begin{align*}
\int_{t}^{+\infty} \| (\psi^* \gamma^0 \psi) e \|_{H^N(\R^3)} \, d\tau.
\end{align*}
By the estimates in Propositions \ref{prop:L2-3d}, \ref{prop:pointwise-3d} and Lemma \ref{lem:refineghost}, we get
\begin{align*}
& \| (\psi^* \gamma^0 \psi) e \|_{H^N(\R^3)} \\
&\lesssim \sum_{|I_1|\leq N-3} \|[\nabla^{I_1}\psi]_{-}\|_{L^{\infty}} \|\psi\|_{H^N} + \sum_{ \substack{|I_1|+|I_2|\leq N \\ |I_2|\leq N-3}} \bigg\|\frac{[\nabla^{I_1}\psi]_{-}}{\langle \tau -r \rangle^{\frac 12 +\delta}} \bigg\| \|\langle \tau -r \rangle^{\frac 12 +\delta} \nabla^{I_2} \psi\|_{L^{\infty}} \\
& \lesssim C_1^2 \langle \tau \rangle^{-\frac 54+ \frac \delta2} + C_1 \langle \tau \rangle^{-1+\delta} \sum_{|I_1| \leq N} \bigg\|\frac{[\nabla^{I_1}\psi]_{-}}{\langle \tau -r \rangle^{\frac 12 +\delta}} \bigg\|,
\end{align*}
which combining with H\"older inequality further yields
\begin{align*}
\int_{t}^{+\infty} \| (\psi^* \gamma^0 \psi) e \|_{H^N(\R^3)} \, d\tau
\lesssim C_1^2 \langle t\rangle^{-\frac{1}{4}+\delta} \lesssim C_1^2 \langle t\rangle^{-\frac{1}{8}},
\end{align*}
where we recall that $0<\delta \ll 1/8$. Based on this and Lemma \ref{lem:scatter}, we have the following scattering result.

\begin{proposition}
The solution $\psi$ in Theorem \ref{thm:3Dquadratic} scatters linearly in 3D. More precisely, there exist $\psi^+ \in H^N$ and constant $\widehat{C}$ such that 
\begin{align*}
\|\psi(t) - S(t-t_0) \psi^+\|_{H^N}
\leq
\widehat{C}\langle t\rangle^{-\frac 18}.
\end{align*}

	\end{proposition}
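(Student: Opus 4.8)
The plan is to invoke the scattering criterion of Lemma~\ref{lem:scatter} with $s=N$ and $F = (\psi^*\gamma^0\psi)e$, so the whole task reduces to producing an integrable-in-time bound for $\|F(\tau)\|_{H^N(\R^3)}$ along the global solution constructed above. First I would commute $\nabla^{I}$ (with $|I|\le N$) onto the product $\psi^*\gamma^0\psi$ and apply the structure lemma, Lemma~\ref{lem:decompose}, in the form $|\Gamma^I(\psi^*\gamma^0\psi)| \lesssim \sum |[\widehat\Gamma^{I_1}\psi]_-|\,|\widehat\Gamma^{I_2}\psi|$, distributing the derivatives so that in each term the lower-order factor (at most $N-3$ derivatives) is placed in $L^\infty$ and the top-order factor (up to $N$ derivatives) is placed in $L^2$. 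This yields two types of terms: one in which the $L^\infty$ factor is a plain $\widehat\Gamma^{I_2}\psi$ (so we exploit the faster-decaying ghost factor $[\widehat\Gamma^{I_1}\psi]_-$ in $L^2$, weighted by $\langle\tau-r\rangle^{-1/2-\delta}$), and one in which the $L^\infty$ factor is $[\widehat\Gamma^{I_2}\psi]_-$ (so we may simply use the plain $L^2$ bound $\|\psi\|_{H^N}\lesssim C_1$).

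Next I would feed in the already-established estimates: Proposition~\ref{prop:L2-3d} for the $L^2$ norms, Proposition~\ref{prop:pointwise-3d} (specifically \eqref{eq:point-3d-1}, which gives $|\widehat\Gamma^{I_2}\psi|\lesssim C_1\langle\tau+r\rangle^{-1}\langle\tau-r\rangle^{-1/2}$, so that $\|\langle\tau-r\rangle^{1/2+\delta}\widehat\Gamma^{I_2}\psi\|_{L^\infty}\lesssim C_1\langle\tau\rangle^{-3/4+3\delta/2}$ exactly as in the proof of \eqref{eq:3d-rough1}), and Lemma~\ref{lem:refineghost} for the refined pointwise ghost bound $|[\widehat\Gamma^{I_2}\psi]_-|\lesssim C_1\epsilon^{1/4}\langle\tau+r\rangle^{-5/4+\delta/2}$. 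Combining these gives
\[
\|(\psi^*\gamma^0\psi)e\|_{H^N(\R^3)}
\lesssim C_1^2\langle\tau\rangle^{-5/4+\delta/2}
+ C_1\langle\tau\rangle^{-1+\delta}\sum_{|I_1|\le N}\bigg\|\frac{[\nabla^{I_1}\psi]_-}{\langle\tau-r\rangle^{1/2+\delta}}\bigg\|.
\]
Integrating in $\tau$ over $[t,\infty)$, the first term is directly integrable; for the second term I would apply the Cauchy--Schwarz inequality in $\tau$, splitting $\langle\tau\rangle^{-1+\delta} = \langle\tau\rangle^{-1/2+\delta}\cdot\langle\tau\rangle^{-1/2}$, so that $\int_t^\infty \langle\tau\rangle^{-1+2\delta}\,d\tau \lesssim \langle t\rangle^{-1+2\delta}$ pairs against $\int_t^\infty \|[\nabla^{I_1}\psi]_-/\langle\tau-r\rangle^{1/2+\delta}\|^2\,d\tau$, which is bounded by $E(\widehat\Gamma^{I_1}\psi,\infty)\lesssim C_1^2$ from the ghost-weight energy \eqref{eq:DiracE} and the closed bootstrap bounds \eqref{eq:BA3d}. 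This produces $\int_t^\infty\|F(\tau)\|_{H^N}\,d\tau \lesssim C_1^2\langle t\rangle^{-1/4+\delta} \lesssim C_1^2\langle t\rangle^{-1/8}$ since $0<\delta\ll1/8$. Then Lemma~\ref{lem:scatter} immediately furnishes $\psi^+\in H^N$ with $\|\psi(t)-S(t-t_0)\psi^+\|_{H^N}\lesssim \int_t^\infty\|F(\tau)\|_{H^N}\,d\tau\lesssim\langle t\rangle^{-1/8}$, which is the claim with $\widehat C$ absorbing the $C_1^2$ factor.

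The main obstacle — such as it is, since all the hard analytic work was done in establishing Proposition~\ref{prop:improved-3d-rough} and Lemma~\ref{lem:refineghost} — is the bookkeeping of the Leibniz distribution: one must check that in every term of $\nabla^N(\psi^*\gamma^0\psi)$ at least one factor carries at most $N-3$ derivatives (so the Klainerman--Sobolev pointwise bounds apply), which holds because $N\ge 7$ forces $N-3 \ge \lceil N/2\rceil$. A minor subtlety is that one should use the \emph{plain} derivatives $\nabla^I$ rather than the full vector fields $\Gamma^I$ when measuring the $H^N$ norm, but this is harmless since $\nabla\le\Gamma$ in the commutator hierarchy and the structure lemma and all cited estimates are stated for $\widehat\Gamma^I$. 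I would close by stating the resulting proposition exactly as displayed, noting that the identical argument applies verbatim to the cubic model of Theorem~\ref{thm:existence}, where the extra cubic factor only improves the decay.
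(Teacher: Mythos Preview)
Your approach is essentially identical to the paper's: invoke Lemma~\ref{lem:scatter}, expand $\|(\psi^*\gamma^0\psi)e\|_{H^N}$ via Lemma~\ref{lem:decompose}, split into the two cases (low-order ghost factor in $L^\infty$ via Lemma~\ref{lem:refineghost}, versus high-order ghost factor in weighted $L^2$), and then integrate using Cauchy--Schwarz against the ghost-weight energy. Two minor slips to clean up: from \eqref{eq:point-3d-1} one actually gets $\|\langle\tau-r\rangle^{1/2+\delta}\widehat\Gamma^{I_2}\psi\|_{L^\infty}\lesssim C_1\langle\tau\rangle^{-1+\delta}$ (better than the $\langle\tau\rangle^{-3/4+3\delta/2}$ you quote, and consistent with your own displayed bound), and in your Cauchy--Schwarz step the integral $\int_t^\infty\langle\tau\rangle^{-1+2\delta}\,d\tau$ diverges---you mean $\int_t^\infty\langle\tau\rangle^{-2+2\delta}\,d\tau\lesssim\langle t\rangle^{-1+2\delta}$, which then gives the correct $C_1^2\langle t\rangle^{-1/2+\delta}$ contribution.
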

	
\subsection{Remarks on 3D cubic Dirac equation}
	In the previous part, we have obtained the global existence and scattering of the Dirac equation with quadratic nonlinearity. The cubic Dirac equation is relatively easy to handle, as the nonlinear term enjoys more decay and smallness(in the $L^{\infty}$ sense). Now we illustrate the main steps in proving Theorem \ref{thm:existence}.
	
	First, based on the assumption of initial data \eqref{eq:thm-3Ddata11},  one can similarly get 
	\begin{align*}
		E(\widehat{\Gamma}^I \psi, t_0 )^{1/2} &\lesssim 1,
\qquad |I|\leq N,
\\
 E(\widehat{\Gamma}^J \partial \psi, t_0 )^{1/2} &\lesssim \epsilon,
\qquad |J|\leq N-1.
	\end{align*} 
Then we can set the bootstrap assumption as follows.  There exists some $T>0$, such that  for all $t_0\leq t <T$, it holds 
\begin{equation} \label{eq:3dcubicboot}
	\aligned
E(\widehat{\Gamma}^I \psi, t )^{1/2} &\leq C_1,
\qquad |I|\leq N,
\\
 E(\widehat{\Gamma}^J \partial \psi, t )^{1/2} &\leq C_1 \epsilon,
\qquad |J|\leq N-1.
\endaligned
\end{equation}
It suffices to show $T=+\infty$. As before, we argue by contradiction and then refine the estimates in \eqref{eq:3dcubicboot}. In this process, $L^{\infty}$ estimates for lower order derivatives of $\psi$ will play a key role. Following from Proposition \ref{prop:pointwise-3d} and  Lemma \ref{smalldecayesti3d}, one can see
\begin{align*}
 \sum_{|I|\leq N-3}  \big|\partial\widehat{\Gamma}^I \psi\big| &\lesssim C_1 \epsilon \langle t+ r \rangle^{-1}\langle t-r\rangle^{-1/2},   \\
	\sum_{|I|\leq N-3} \big|\widehat{\Gamma}^I \psi\big| &\lesssim C_1 \epsilon^{1/2} \langle t+ r \rangle^{-3/4}. 
\end{align*} 
This suffices to refine the estimates in \eqref{eq:3dcubicboot}, we omit the  details.

\section{The 2D cubic Dirac equations}\label{sec:2d}

	In the 2D case, due to the slow decay nature of free Dirac solutions we only show global existence for the model \eqref{eq:Soler} with $H$ being Hermitian and $F = I$; if additionally $H=\gamma^0$, linear scattering for $\psi$ is obtained. We recall that in 2D the solution to \eqref{eq:Soler} might blow up in finite time even for small smooth initial data if no restrictions are posed on $H, F$. First, we derive the energy bounds at the initial time which serves as the basis of our bootstrap setting.

%Similar to Lemma \ref{lem:ID-3d}, we have an analogous one in the 2D case.
\begin{lemma}\label{lem:ID-2d}
Let \eqref{eq:thm-2Ddata} hold, then we have
\begin{align}
E(\widehat{\Gamma}^I \psi, t_0 )^{1/2} &\lesssim 1,
\qquad |I|\leq N,
\\
 E(\widehat{\Gamma}^J  \partial \psi, t_0 )^{1/2} &\lesssim \epsilon,
\qquad |J|\leq N-1.
\end{align}
\end{lemma}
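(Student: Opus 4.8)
The statement is the 2D analogue of Lemma \ref{lem:ID-3d}, so the plan is to transfer that proof essentially verbatim, adjusting only the dimension-dependent decay exponents and replacing the 3D Hardy inequality (Proposition \ref{ineq:hardy}) with the one-dimensional-in-radius integration used throughout. We are dealing with the cubic nonlinearity $(\psi^* H \psi) F\psi$ rather than the quadratic one, which actually makes the estimates on the nonlinear terms strictly easier: each nonlinear contribution carries an extra factor of $\psi$ that is small (order $\epsilon$) in every weighted $L^2$ norm with derivatives and bounded in plain $L^2$, so the nonlinear terms never threaten the bounds. The only genuinely $d$-dependent ingredient is that the Klainerman--Sobolev inequality (Proposition \ref{prop:K-S}) in $\R^{2+1}$ gives decay $\langle t+r\rangle^{-1/2}$ rather than $\langle t+r\rangle^{-1}$; at the initial time $t=t_0$ this means that from $\sum_{|J|\leq N-1}\|\langle x\rangle^{|J|}\nabla\nabla^J\psi_0\|<\epsilon$ and $\sum_{|I|\leq N}\|\langle x\rangle^{|I|}\nabla^I\psi_0\|<20$ one gets $|\nabla\nabla^K\psi_0(x)|\lesssim\min\{\epsilon\langle r\rangle^{-1-|K|},\langle r\rangle^{-2-|K|}\}$ for $|K|\leq N-3$, with the corresponding shift in every later power of $\langle r\rangle$.

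The steps, in order, mirror the five-step scheme of Lemma \ref{lem:ID-3d}. First, from the weighted $L^2$ data bounds and Proposition \ref{prop:K-S}, deduce pointwise decay with smallness for $\nabla\nabla^K\psi_0$, $|K|\leq N-3$; then integrate from spatial infinity (using $\nabla^K\psi_0(r=\infty)=0$) via the fundamental theorem of calculus to get $|\nabla^K\psi_0(x)|\lesssim\epsilon\langle r\rangle^{-|K|}$ (note: in 2D the integral $\int_r^\infty\langle s\rangle^{-1-|K|}\,ds\lesssim\langle r\rangle^{-|K|}$ converges for $|K|\geq1$, and for $K=0$ one uses the better non-small bound $\langle r\rangle^{-2}$ to close it, or simply the original $L^2$ bound combined with Sobolev); refine this via Proposition \ref{prop:K-S} once more to the two-sided bound $|\nabla^K\psi_0|\lesssim\min\{\epsilon\langle r\rangle^{-|K|},\langle r\rangle^{-1-|K|}\}$ for $|K|\leq N-3$. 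Second, use the equation rewritten as $\partial_t\psi = -\gamma^0\gamma^a\partial_a\psi + i(\psi^* H\psi)\gamma^0 F\psi$, apply $\nabla^J$, and estimate by the triangle inequality and Lemma \ref{lem:preserve} (its 2D analogue) so that $\|\langle x\rangle^{|J|}\partial_t\nabla^J\psi_0\|$ is controlled by $\|\langle x\rangle^{|J|}\nabla\nabla^J\psi_0\|$ plus products of a weighted $L^2$ factor and a weighted $L^\infty$ factor of $\psi_0$; inserting the Step 1 bounds and the data hypothesis gives smallness $\lesssim\epsilon$. Third, do the same with one extra weight $\langle x\rangle^{|J|+1}$ to get boundedness $\lesssim1$. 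Fourth, apply $\nabla^K$ to the rewritten equation and use the pointwise bounds to get pointwise decay-with-smallness for $\partial_t\nabla^K\psi_0$, $|K|\leq N-4$. Fifth, conclude by induction on the number of $\partial_t$'s, converting all derivatives $\partial^I$ into purely spatial ones plus lower-order nonlinear remainders, to obtain $\|\langle x\rangle^{|I|}\partial^I\psi_0\|\lesssim1$ for $|I|\leq N$ and $\|\langle x\rangle^{|J|}\partial\partial^J\psi_0\|\lesssim\epsilon$ for $|J|\leq N-1$; these translate immediately into the claimed bounds on $E(\widehat\Gamma^I\psi,t_0)^{1/2}$ and $E(\widehat\Gamma^J\partial\psi,t_0)^{1/2}$ since at $t=t_0$ the ghost-weight spacetime integral in \eqref{eq:DiracE} is absent and $E(\phi,t_0)=\|\phi_0\|^2$, while $\widehat\Gamma^I\psi_0$ is a linear combination of $\langle x\rangle^{|I'|}\partial^{I'}\psi_0$ terms with $|I'|\leq|I|$ up to bounded coefficients.

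The main obstacle — and it is a mild one — is the convergence of the fundamental-theorem-of-calculus integral at the $K=0$ level in 2D: the small bound $|\nabla\psi_0|\lesssim\epsilon\langle r\rangle^{-1}$ is not integrable on $[r,\infty)$, so to recover smallness of $\|\psi_0\|_{L^\infty}$ one must instead use the non-small but faster-decaying bound $|\nabla\psi_0|\lesssim\langle r\rangle^{-2}$ (from the $<20$ data bound), yielding $|\psi_0(x)|\lesssim\langle r\rangle^{-1}$ without smallness; the smallness of $\|\psi_0\|$ itself is \emph{not} claimed in Lemma \ref{lem:ID-2d} (only boundedness), so this is consistent, and wherever the cubic nonlinearity needs a small $L^\infty$ factor one uses $|\nabla^K\psi_0|$ with $|K|\geq1$, which is small. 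One also needs to double-check that at each inductive stage the weight $\langle x\rangle^{|I|}$ is never exceeded — this is automatic because every time the equation trades a $\partial_t$ for a $\partial_a$ the weight and the derivative count are unchanged, and the nonlinear terms are lower order with the same or fewer weights, exactly as in Lemma \ref{lem:ID-3d}. I would therefore simply write "the proof is identical to that of Lemma \ref{lem:ID-3d}, with the decay exponents of Proposition \ref{prop:K-S} adjusted to $d=2$ and the cubic nonlinearity handled by the same Leibniz-rule and Lemma \ref{lem:preserve} argument, noting that the extra factor of $\psi$ only improves the estimates" and, if desired, spell out Step 1 in the 2D exponents for the reader's convenience.
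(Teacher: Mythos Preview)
Your overall plan---mirror the 3D proof of Lemma~\ref{lem:ID-3d} with 2D exponents---is exactly what the paper does, and most of your steps are fine. But your resolution of the $K=0$ obstacle is not correct, and the gap propagates.

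You claim that whenever the cubic nonlinearity needs a small $L^\infty$ factor you can use $|\nabla^K\psi_0|$ with $|K|\ge 1$. This fails already for the most basic smallness claim, namely $\|\partial_t\psi(t_0)\|\lesssim\epsilon$ (the $J=0$ case of $E(\widehat\Gamma^J\partial\psi,t_0)^{1/2}\lesssim\epsilon$). Rewriting the equation gives $\|\partial_t\psi(t_0)\|\lesssim\|\nabla\psi_0\|+\|(\psi_0^*H\psi_0)\psi_0\|$, and the cubic term carries \emph{no} derivatives at all---there is no $\nabla^K\psi_0$ with $|K|\ge 1$ to borrow smallness from. With your bound $|\psi_0|\lesssim\langle r\rangle^{-1}$ (no $\epsilon$), you only get $\|(\psi_0^*H\psi_0)\psi_0\|\lesssim\|\psi_0\|_{L^\infty}^2\|\psi_0\|\lesssim 1$, not $\epsilon$. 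The same problem recurs at every order $k$ whenever the Leibniz rule puts all derivatives on a single factor, leaving two undifferentiated $\psi_0$'s in $L^\infty$.

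The paper's fix is precisely the interpolation you skipped: from $|\nabla\nabla^k\psi_0|\lesssim\min\{\epsilon\langle r\rangle^{-1-k},\langle r\rangle^{-2-k}\}$ take the geometric mean to get $|\nabla\nabla^k\psi_0|\lesssim\epsilon^{1/2}\langle r\rangle^{-3/2-k}$, which \emph{is} integrable on $[r,\infty)$ for every $k\ge 0$, yielding $|\nabla^k\psi_0|\lesssim\epsilon^{1/2}\langle r\rangle^{-1/2-k}$ for all $0\le k\le N-3$. In particular $\|\psi_0\|_{L^\infty}\lesssim\epsilon^{1/2}$, and then $\|\psi_0\|_{L^\infty}^2\|\psi_0\|\lesssim\epsilon$ closes the cubic term. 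This $\epsilon^{1/2}$ pointwise smallness (rather than full $\epsilon$) is the essential 2D modification; once you have it, the rest of your outline goes through.
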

\begin{proof}
	{\bf Step 1.}  We first prove 
	\begin{align}  \label{eq:spacederiv}
		|\langle x \rangle^k \nabla^k \psi(t_0,x)| \lesssim \min \{\epsilon^{1/2} \langle r \rangle^{-1/2}, \langle r \rangle^{-1}  \},\ \ \forall\ 0\leq k\leq N-3. 
	\end{align}
Indeed, by applying the Klainerman-Sobolev inequality, Propositions \ref{prop:extradecay}--\ref{prop:commu2} on \eqref{eq:thm-2Ddata}, one can see
$$
| \nabla \nabla^k \psi(t_0)| \lesssim \min\{ \epsilon\langle r \rangle^{-1-k}, \langle r \rangle^{-2-k}\}  .
$$
%Further we apply Propositions \ref{prop:extradecay}, \ref{prop:commu2}, and the Klainerman-Sobolev inequality on \eqref{eq:thm-2Ddata} to get 
%\begin{align}
%	|\nabla  \nabla^k \psi(t_0)| \lesssim \langle r \rangle^{-3}.
%\end{align}
Noticing that $ \nabla^k \psi(t_0, |x|=+\infty) =0$, then we have 
\begin{align}
	| \nabla^k \psi|(t_0, x) \leq  \int_{|x|}^{+\infty} \big|\partial_{r}  \nabla^k \psi(t_0,y) \big| d|y| \lesssim \epsilon^{1/2} \langle r \rangle^{-1/2-k}.  \label{eq:spacepoint}
\end{align}
Besides,  Propositions \ref{prop:extradecay}, \ref{prop:commu2} and Klainerman-Sobolev inequality indicate that
$$
|\nabla^k \psi(t_0)| \lesssim \langle r \rangle^{-1-k}.
$$ 
Thus we have \eqref{eq:spacederiv}.

{\bf Step 2.} We shall prove the following $L^2$  estimates
\begin{align}
	\|\langle x \rangle^{k}\partial \nabla^{k-1}  \psi(t_0,x)\| & \lesssim 1,  \ \ \qquad \forall \  1\leq k\leq N,   \label{eq:timepoint}  \\
	\|\langle x \rangle^{k}  \partial \nabla^k \psi(t_0)\| & \lesssim  \epsilon, \ \quad \forall \ 0\leq k \leq N-1,
\end{align}
and $L^{\infty}$ estimate
\begin{align*}
	|\langle x \rangle^k \partial \nabla^{k-1} \psi(t_0,x)| \lesssim  \min \{\epsilon^{1/2} \langle r \rangle^{-1/2}, \langle r \rangle^{-1}  \},\ \ \forall\ 1\leq k\leq N-3.
\end{align*}
Recall that we can rewrite equation \eqref{eq:Soler} as 
\begin{align} \label{eq:timeequation}
	\partial_t \psi = -\gamma^0 \gamma^a \partial_a \psi + i(\gamma^0 \psi^* H \psi)\psi.
\end{align}
For any $1\leq k \leq N$,  acting $\nabla^{k-1}$ to both sides of \eqref{eq:timeequation}, one can get 
\begin{align*}
	\|\langle x \rangle^{k}\partial_t \nabla^{k-1}\psi(t_0)\| \lesssim \|\langle x \rangle^{k}\nabla \nabla^{k-1}\psi\|+ \|\langle x \rangle^k\nabla^{k-1}[(\psi^*H \psi)\psi]\| \lesssim 1,
\end{align*}
where we use \eqref{eq:thm-2Ddata} and \eqref{eq:spacederiv} in the last step.  Furthermore, fix $0\leq k \leq N-1$, owing to \eqref{eq:thm-2Ddata} and \eqref{eq:spacederiv}, one can also obtain
\begin{align*}
	\|\langle x \rangle^{k}\partial_t \nabla^{k}\psi(t_0)\| \lesssim \|\langle x \rangle^{k}\nabla \nabla^{k}\psi\|+ \|\langle x \rangle^k\nabla^{k}[(\psi^*H \psi)\psi]\| \lesssim \epsilon.
\end{align*}
Finally, for $1\leq k\leq N-3$, we use \eqref{eq:timeequation}, \eqref{eq:spacederiv} to deduce that
\begin{align*}
	|\partial_t \nabla^{k-1}\psi(t_0,x)| & \lesssim |\nabla \nabla^{k-1}\psi(t_0,x)|+|\nabla^{k-1}[(\psi^*H\psi)\psi]| \\
	& \lesssim \min \{\epsilon^{1/2} \langle r \rangle^{-1/2-k}, \langle r \rangle^{-1-k}  \}.
\end{align*}

{\bf Step 3. Smallness and boundedness.} 
We prove 
\begin{align}
	\|\langle r \rangle^{|I|}\partial^{|I|}  \psi(t_0,x)\| & \lesssim 1,  \ \ \qquad \forall \ \   |I|\leq N,   \label{eq:timepoint}  \\
	\|\langle r \rangle^{|I|}  \partial \partial^{I}\psi(t_0)\| & \lesssim  \epsilon, \ \quad \forall \ 0\leq |I| \leq N-1, \notag \\
	\langle r \rangle^{|I|}|\partial^{I} \psi(t_0,x)| & \lesssim  \min \{\epsilon^{1/2} \langle r \rangle^{-1/2}, \langle r \rangle^{-1}  \},\ \ \forall\ 1\leq |I|\leq N-3. \notag
\end{align}

Indeed, one can repeat the procedure in Step 2 to replace $\nabla$ by $\partial$ one by one. An  induction argument will lead to the desired results.  The proof is done.
\end{proof}

Next, we set the bootstrap assumption. Since the energy of solution is continuous with respect to time,  one can assert that there exists some time $T>t_0$, such that for all $t\in [t_0, T)$, it holds
 \begin{equation}\label{eq:BA2d}
\aligned
E(\widehat{\Gamma}^I \psi, t )^{1/2} &\leq C_1,
\qquad |I|\leq N,
\\
 E(\widehat{\Gamma}^J \partial\psi, t )^{1/2} &\leq C_1 \epsilon,
\qquad |J|\leq N-1.
\endaligned
\end{equation}
	In the above $C_1 \gg 1$ is to be determined, and $\epsilon$ is sufficiently small such that $C_1^6 \epsilon \ll 1$. The maximal existence time $T^*$ is defined as
	\begin{equation}\label{eq:T-2d}
	T^* = \sup \{T: \eqref{eq:BA2d} \text{ holds} \},
	\end{equation}
and it holds $T^* > t_0$. Similar to the previous section, our goal is to verify $T^* = +\infty$, and then the solution $\psi$ exists globally. Below in this section we assume $T^*<+\infty$,  and then deduce a contradiction which implies $T^*$ must be $+\infty$. 	
%In the remainder part of the article, for convenience, we denote 
%\begin{align*}
%	\eta = \epsilon^{1/4}.
%\end{align*}
With the help of commutator estimates, we can obtain the following $L^2$-type bounds.
\begin{proposition}	\label{prop:L2-2d}
	Let $\psi$ satisfy the estimates in \eqref{eq:BA2d}, then for all $t\in [t_0, T^*)$ one has
	$$
	\aligned
	&\sum_{|I|\leq N}\| \widehat{\Gamma}^I \psi \| 
	+ \sum_{|I|\leq N} \left( \int_{t_0}^t \Big\| {[\widehat{\Gamma}^I \psi]_- \over \langle s-r\rangle^{1/2+\delta}} \Big\|^2 \, ds \right)^{\frac 12}
	\leq C_1,
	\\
	&\sum_{|J|\leq N-1} \big(\| \widehat{\Gamma}^J \partial  \psi \| + \|  \partial \widehat{\Gamma}^J \psi \| \big)
	\lesssim C_1 \epsilon,
		\\
		 &\sum_{|J|\leq N-1} \left( \int_{t_0}^t \Big(\Big\| {[\widehat{\Gamma}^J \partial \psi]_- \over \langle s-r\rangle^{1/2+\delta}} \Big\|^2 
		+ \Big\| {[\partial \widehat{\Gamma}^J  \psi]_- \over \langle s-r\rangle^{1/2+\delta}} \Big\|^2 \Big) \, ds \right)^{\frac 12}
	\lesssim C_1 \epsilon.
	\endaligned
	$$
	\end{proposition}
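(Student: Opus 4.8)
The plan is to derive Proposition \ref{prop:L2-2d} directly from the bootstrap assumptions \eqref{eq:BA2d} together with the ghost-weight energy $E(\cdot,t)$ in \eqref{eq:DiracE} and the commutator estimates of Propositions \ref{prop:commu1}--\ref{prop:commu2}. First, observe that by definition \eqref{eq:DiracE} the quantity $E(\widehat{\Gamma}^I\psi,t)$ controls both $\|\widehat{\Gamma}^I\psi\|^2$ and the spacetime integral $\int_{t_0}^t \|[\widehat{\Gamma}^I\psi]_-\langle s-r\rangle^{-1/2-\delta}\|^2\,ds$ (choosing the ghost exponent $1+2\delta$); hence the first line of the proposition, namely the bound on $\sum_{|I|\le N}\|\widehat{\Gamma}^I\psi\|$ together with the square-root of the spacetime integral, is immediate from $E(\widehat{\Gamma}^I\psi,t)^{1/2}\le C_1$ and the triangle inequality over the finitely many multi-indices $|I|\le N$. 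Likewise the ghost-weight spacetime bounds for $\widehat{\Gamma}^J\partial\psi$ in the third line follow from $E(\widehat{\Gamma}^J\partial\psi,t)^{1/2}\le C_1\epsilon$.

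Second, for the $L^2$ bounds on $\partial\widehat{\Gamma}^J\psi$ (as opposed to $\widehat{\Gamma}^J\partial\psi$) I would invoke Proposition \ref{prop:commu2}: writing $\partial_\alpha\widehat{\Gamma}^J\psi = \widehat{\Gamma}^J\partial_\alpha\psi + [\partial_\alpha,\widehat{\Gamma}^J]\psi$, the commutator term is bounded by $\sum_{|K|<|J|}\sum_\beta|\partial_\beta\widehat{\Gamma}^K\psi|$, i.e.\ by lower-order quantities of the same type $\widehat{\Gamma}^K\partial\psi$ (again up to commutators). A downward induction on $|J|$ then shows $\sum_{|J|\le N-1}\|\partial\widehat{\Gamma}^J\psi\| \lesssim \sum_{|J|\le N-1}\|\widehat{\Gamma}^J\partial\psi\| \lesssim C_1\epsilon$, using $E(\widehat{\Gamma}^J\partial\psi,t)^{1/2}\le C_1\epsilon$ for the base quantities. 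The same commutator bookkeeping, applied to $[\widehat{\Gamma}^J\partial\psi]_-$ versus $[\partial\widehat{\Gamma}^J\psi]_-$ (note $[\cdot]_-$ is just multiplication by the bounded matrix $I - \frac{x_a}{r}\gamma^0\gamma^a$, so it commutes harmlessly with this estimate), yields the $\|[\partial\widehat{\Gamma}^J\psi]_-\langle s-r\rangle^{-1/2-\delta}\|$ part of the third display from the $[\widehat{\Gamma}^J\partial\psi]_-$ part.

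The main (and really only) obstacle is the bookkeeping in the induction: one must be careful that the commutator $[\partial_\alpha,\widehat{\Gamma}^J]$ produces terms of the form $\partial_\beta\widehat{\Gamma}^K$ with $|K|<|J|$, which are again mixed derivative/vector-field expressions, so the induction must be set up so that at each stage the already-established bound for strictly lower order can be fed in; this is routine but must be stated cleanly. I would organize the proof as: (i) read off the first and third displays' ghost-weight parts directly from \eqref{eq:DiracE} and \eqref{eq:BA2d}; (ii) prove by induction on $|J|\le N-1$ the conversion between $\partial\widehat{\Gamma}^J$ and $\widehat{\Gamma}^J\partial$ in $L^2$ using Proposition \ref{prop:commu2}; (iii) repeat (ii) for the $[\cdot]_-$-weighted spacetime norms, noting the matrix $\tfrac{x_a}{r}\gamma^0\gamma^a$ is uniformly bounded. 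No genuinely new estimate is needed beyond what is already in Section \ref{sec:pre}.
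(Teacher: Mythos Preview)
Your proposal is correct and matches the paper's approach: the paper does not give a detailed proof but simply states that Proposition~\ref{prop:L2-2d} follows ``with the help of commutator estimates,'' which is exactly the mechanism you outline---reading off the $L^2$ and spacetime pieces directly from the definition \eqref{eq:DiracE} of $E(\cdot,t)$ together with \eqref{eq:BA2d}, and then converting between $\partial\widehat{\Gamma}^J$ and $\widehat{\Gamma}^J\partial$ via Proposition~\ref{prop:commu2} by induction on $|J|$.
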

	
	Combined with the Klainerman-Sobolev inequality in Proposition \ref{prop:K-S} and commutator estimates in Proposition \ref{prop:commu2}, we can derive the following pointwise estimates for $\psi$.
	\begin{proposition}\label{prop:pointwise-2d}
	Let $\psi$ satisfy the estimates in \eqref{eq:BA2d}, then for all $t\in [t_0, T^*)$ the following holds
	\begin{align}
	&\sum_{|I|\leq N-2}| \widehat{\Gamma}^I \psi | \lesssim C_1 \langle t+r\rangle^{-1/2} \langle t-r\rangle^{-1/2},  \label{eq:point-2d-1}
	\\
	&\sum_{|J|\leq N-3} | \widehat{\Gamma}^J  \partial \psi | + \sum_{|J|\leq N-3} | \partial \widehat{\Gamma}^J \psi | 
	\lesssim C_1 \epsilon \langle t+r\rangle^{-1/2} \langle t-r\rangle^{-1/2}.  \label{eq:point-2d-2}  
	\end{align}
	\end{proposition}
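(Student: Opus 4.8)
The plan is to derive both bounds directly from the Klainerman--Sobolev inequality (the $\C^{2^{d-1}}$-valued form of Proposition \ref{prop:K-S}) together with the $L^2$-type information already collected in Proposition \ref{prop:L2-2d}; the commutator estimates of Proposition \ref{prop:commu2} are invoked only to pass freely between $\widehat{\Gamma}^K\widehat{\Gamma}^I$ and a finite sum of $\widehat{\Gamma}^L$ with $|L|\le |K|+|I|$, and between $\partial\widehat{\Gamma}^J$ and $\widehat{\Gamma}^J\partial$ modulo lower-order terms of the same type. Since $d=2$ throughout this section, the Klainerman--Sobolev weight is exactly $\langle t+r\rangle^{-1/2}\langle t-r\rangle^{-1/2}$, which is the decay claimed in the statement.

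First I would handle $\sum_{|I|\le N-2}|\widehat{\Gamma}^I\psi|$. Fixing $|I|\le N-2$, Proposition \ref{prop:K-S} yields
\[
|\widehat{\Gamma}^I\psi(t,x)| \lesssim \langle t+r\rangle^{-1/2}\langle t-r\rangle^{-1/2}\sum_{|K|\le 2}\big\|\widehat{\Gamma}^K\widehat{\Gamma}^I\psi(t)\big\|,
\]
and by Proposition \ref{prop:commu2} the $L^2$ norms on the right are bounded by $\sum_{|L|\le N}\|\widehat{\Gamma}^L\psi(t)\|\le C_1$, the first line of Proposition \ref{prop:L2-2d}. Summing over $|I|\le N-2$ gives the first estimate.

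For the derivative estimate I would fix $|J|\le N-3$ and apply Proposition \ref{prop:K-S} to each of $\widehat{\Gamma}^J\partial\psi$ and $\partial\widehat{\Gamma}^J\psi$; this produces the factor $\langle t+r\rangle^{-1/2}\langle t-r\rangle^{-1/2}$ times $\sum_{|K|\le 2}$ of the corresponding $L^2$ norms. Commuting $\partial$ past $\widehat{\Gamma}^K$ with Proposition \ref{prop:commu2}, all these norms are controlled by $\sum_{|L|\le N-1}\big(\|\widehat{\Gamma}^L\partial\psi(t)\|+\|\partial\widehat{\Gamma}^L\psi(t)\|\big)\lesssim C_1\epsilon$, the second line of Proposition \ref{prop:L2-2d}. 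Summing over $|J|\le N-3$ gives the second estimate.

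I do not expect a genuine obstacle here: this is the routine passage from weighted-$L^2$ bootstrap bounds to pointwise decay, entirely parallel to Proposition \ref{prop:pointwise-3d}. The only bookkeeping point is the index budget --- Klainerman--Sobolev costs two extra vector fields and the $\partial$-commutations cost only lower-order ones --- so one must check that $N-2+2=N$ and $N-3+2=N-1$ stay exactly within the ranges for which Proposition \ref{prop:L2-2d} supplies the needed $L^2$ control, which they do.
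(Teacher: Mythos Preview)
Your proposal is correct and matches the paper's approach exactly: the paper does not give a detailed proof of Proposition~\ref{prop:pointwise-2d} but simply states (just before it) that the estimates follow from the Klainerman--Sobolev inequality of Proposition~\ref{prop:K-S} together with the commutator estimates of Proposition~\ref{prop:commu2}, which is precisely what you carry out. Your index bookkeeping is right, and the parallel you draw with Proposition~\ref{prop:pointwise-3d} is apt.
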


	\subsection{Improved estimates}	
	
	Our goal in this section is to prove improved bounds for the solution $\psi$, which is listed in the following proposition.
\begin{proposition}	\label{prop:improved-2d}
	If the estimates in \eqref{eq:BA2d} hold, for all $t\in [t_0, T^*)$ we have
	\begin{equation}
	\aligned
E(\widehat{\Gamma}^I \psi, t )^{1/2} &\leq {1\over 2} C_1,
\qquad |I|\leq N,
\\
 E(\widehat{\Gamma}^J \partial  \psi, t )^{1/2} &\leq {1\over 2} C_1 \epsilon,
\qquad |J|\leq N-1.
\endaligned
	\end{equation}
\end{proposition}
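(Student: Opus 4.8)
The plan is to run the energy estimates of Proposition \ref{prop:DiracEE} on the equations satisfied by $\widehat{\Gamma}^I\psi$ and $\widehat{\Gamma}^J\partial\psi$, which in the 2D cubic case read
\begin{align*}
-i\gamma^\mu\del_\mu\widehat{\Gamma}^I\psi &= \sum_{|L|\leq|I|}c_{L,I}\widehat{\Gamma}^L\big((\psi^* H\psi)\psi\big),\qquad |I|\leq N,\\
-i\gamma^\mu\del_\mu\widehat{\Gamma}^J\partial\psi &= \sum_{|K|\leq|J|}c_{K,J}\widehat{\Gamma}^K\partial\big((\psi^* H\psi)\psi\big),\qquad |J|\leq N-1.
\end{align*}
Thus, as in the proof of Proposition \ref{prop:improved-3d-rough}, it suffices to control $\int_{t_0}^t\|\widehat{\Gamma}^I\psi\|\,\|\widehat{\Gamma}^L((\psi^* H\psi)\psi)\|\,ds$ and its $\partial$-analogue, using the energy bound at $t_0$ from Lemma \ref{lem:ID-2d}. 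First I would establish the needed pointwise auxiliary bounds: exactly as in Lemma \ref{smalldecayesti3d} and Lemma \ref{lem:refineghost}, interpolating the large bound \eqref{eq:point-2d-1} against the small bound \eqref{eq:point-2d-2} (via Proposition \ref{prop:extradecay} to get the extra $\langle t-r\rangle$ decay) yields, for $|I|\leq N-3$, a pointwise estimate $|\widehat{\Gamma}^I\psi|\lesssim C_1\epsilon^{1/4}\langle t+r\rangle^{-1/2+\delta/2}\langle t-r\rangle^{-?}$ with some smallness in $\epsilon$, and Lemma \ref{lem:D-Point-improve} gives the improved ghost-weight bound $|[\widehat{\Gamma}^I\psi]_-|\lesssim C_1\epsilon^{1/4}\langle t+r\rangle^{-1+\delta/2}$ (note the exponents are worse than 3D because the free decay rate is only $\langle t+r\rangle^{-1/2}$).

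The central mechanism is the decomposition in Lemma \ref{lem:decompose}: the cubic nonlinearity $(\psi^* H\psi)\psi$ (for Hermitian $H$ this plays the role of $\psi^*\gamma^0\psi$ up to the harmless factor, and to use Lemma \ref{lem:decompose} one writes $\psi^* H\psi$ appropriately) obeys
$$
\big|\Gamma^L\big((\psi^* H\psi)\psi\big)\big|\lesssim\sum_{|L_1|+|L_2|+|L_3|\leq|L|}\big|[\widehat{\Gamma}^{L_1}\psi]_-\big|\,\big|\widehat{\Gamma}^{L_2}\psi\big|\,\big|\widehat{\Gamma}^{L_3}\psi\big|,
$$
so every top-order term carries at least one factor $[\,\cdot\,]_-$ whose spacetime-weighted $L^2$ norm is exactly what the ghost-weight energy $E(\cdot,t)$ controls. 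I would split into the case where the top-order index falls on the $[\,\cdot\,]_-$ factor (put that factor in $\|\langle s-r\rangle^{-1/2-\delta}[\widehat{\Gamma}^{L_1}\psi]_-\|$, and the two lower-order factors, one of which is $[\,\cdot\,]_-$ with the improved $\langle t+r\rangle^{-1+\delta/2}$ bound and the other the $L^\infty$ bound $\langle t+r\rangle^{-1/2+\delta}$-type bound, giving an integrable-in-$s$ weight $\langle s\rangle^{-3/2+\cdots}$), and the case where the top-order index falls on a factor without $[\,\cdot\,]_-$ (then use Hardy's inequality as in \eqref{ineq:esti3}--\eqref{ineq:esti5}, $\|\langle r\rangle^{-1/4}\langle s-r\rangle^{-1/2}[\widehat{\Gamma}^{L_1}\psi]_-\|\lesssim\|\nabla\widehat{\Gamma}^{L_1}\psi\|^{1/4}\|\langle s-r\rangle^{-1/2-\delta}[\widehat{\Gamma}^{L_1}\psi]_-\|^{3/4}$, to redistribute the weight). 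In all cases Cauchy-Schwarz in $s$ against $\int_{t_0}^t\|\langle s-r\rangle^{-1/2-\delta}[\widehat{\Gamma}^{L_1}\psi]_-\|^2\,ds\leq C_1^2$ from Proposition \ref{prop:L2-2d} closes the loop, yielding $E(\widehat{\Gamma}^I\psi,t)\lesssim 1+C_1^4\epsilon^{c}$ and $E(\widehat{\Gamma}^J\partial\psi,t)\lesssim\epsilon^2+C_1^4\epsilon^{2+c}$ for some $c>0$; choosing $C_1$ large and $\epsilon$ small with $C_1^6\epsilon\ll1$ gives the factor $\tfrac12$.

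The main obstacle is the slow decay in 2D: the naive bound $\|(\psi^* H\psi)\psi\|\lesssim\|\psi\|\,\|\psi\|_{L^\infty}^2$ only produces the weight $\langle s\rangle^{-1}$ from $\|\psi\|_{L^\infty}^2$ (since each $\psi$ decays like $\langle t+r\rangle^{-1/2}$), which is \emph{not} time-integrable, so the crude argument fails entirely and the null/ghost structure is essential rather than merely convenient. The delicate point is bookkeeping the $\delta$-losses: one picks up $\langle t+r\rangle^{+\delta}$ factors from the interpolated pointwise bounds and from the $[\,\cdot\,]_-$ improvement, and one must verify that two $[\,\cdot\,]_-$ factors plus one $L^\infty$ factor genuinely beat $\langle s\rangle^{-1}$ by a margin larger than the accumulated $\delta$'s — i.e. that the resulting exponent is $-1-\delta'$ for some $\delta'>0$ — which forces $\delta$ to be chosen sufficiently small (consistent with $0<\delta\ll1/8$) and requires that the two lower-order factors both lie in the range $\leq N-3$ (respectively $\leq N-4$ for the $\partial$-hit ghost bound), a constraint comfortably met since $N\geq7$. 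The scattering statement for $H=\gamma^0$ then follows from Lemma \ref{lem:scatter} exactly as in the 3D case, once the above estimates show $\int_{t_0}^{+\infty}\|(\psi^*\gamma^0\psi)\psi\|_{H^N}\,d\tau<+\infty$.
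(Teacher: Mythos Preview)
There is a genuine gap in your plan, and it concerns precisely the point you flag as ``the central mechanism.'' You write
\[
\big|\Gamma^L\big((\psi^* H\psi)\psi\big)\big|\lesssim\sum_{|L_1|+|L_2|+|L_3|\leq|L|}\big|[\widehat{\Gamma}^{L_1}\psi]_-\big|\,\big|\widehat{\Gamma}^{L_2}\psi\big|\,\big|\widehat{\Gamma}^{L_3}\psi\big|,
\]
claiming that for Hermitian $H$ the scalar $\psi^*H\psi$ ``plays the role of $\psi^*\gamma^0\psi$ up to a harmless factor.'' This is false: the cancellation $[\Psi]_+^*\gamma^0[\Phi]_+=0$ underlying Lemma~\ref{lem:decompose} is specific to the matrix $\gamma^0$ and does \emph{not} hold for a general Hermitian $H$. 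So for the theorem as stated (general $H^*=H$, $F=I$) the nonlinearity $\psi^*H\psi$ carries no null structure, and your displayed bound is simply not available. Since you have already applied H\"older to pass to $\|\widehat{\Gamma}^I\psi\|\,\|\widehat{\Gamma}^L((\psi^*H\psi)\psi)\|$, you are then stuck with the non-integrable weight $\langle s\rangle^{-1}$ that you yourself identify as the obstacle.

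The paper avoids this by \emph{not} applying H\"older at that first step. It keeps the $L^1$ integrand $(\widehat{\Gamma}^I\psi)^*\gamma^0\,\widehat{\Gamma}^L\big((\psi^*H\psi)\psi\big)$ from Proposition~\ref{prop:DiracEE}, uses Lemma~\ref{lem:preserve} to write $\widehat{\Gamma}^L\big((\psi^*H\psi)\psi\big)=\sum\Gamma^{I_2}(\psi^*H\psi)\,\widehat{\Gamma}^{I_1}\psi$ (the scalar factor pulls out), and then applies Lemma~\ref{lem:decompose} to the pairing $(\widehat{\Gamma}^I\psi)^*\gamma^0\,\widehat{\Gamma}^{I_1}\psi$. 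The crucial $\gamma^0$ comes from the energy estimate itself, not from the nonlinearity. This yields exactly one $[\,\cdot\,]_-$ factor (on $\widehat{\Gamma}^I\psi$ or on $\widehat{\Gamma}^{I_1}\psi$), while $\Gamma^{I_2}(\psi^*H\psi)$ is bounded crudely by two pointwise factors; one of those supplies $\epsilon^{1/2}\langle s\rangle^{-1/2+\delta}$ via Lemma~\ref{lem:smdecaypsi} and the other $\langle s-r\rangle^{1/2+\delta}\langle s\rangle^{-1/2}$, which together with the ghost-weighted $L^2$ norm of the $[\,\cdot\,]_-$ factor closes. A secondary issue: your appeal to Hardy's inequality as in \eqref{ineq:esti3}--\eqref{ineq:esti5} is a 3D device (Proposition~\ref{ineq:hardy} is stated in $\R^3$) and is neither used nor needed in the paper's 2D argument.
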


	To prove Proposition \ref{prop:improved-2d}, we need to prepare some bounds on  $\psi$. We note the pointwise estimates for $\widehat{\Gamma}^I \psi$ (with $|I|\leq N-2$) in Proposition \ref{prop:pointwise-2d} do not enjoy any smallness in $\epsilon$, which causes difficulty in showing Proposition \ref{prop:improved-2d}. The following result tells us that $\widehat{\Gamma}^J \psi$ (with $|J|\leq N-3$) can enjoy some smallness in $\epsilon$ at the expense of losing some decay rate.
	\begin{lemma} \label{lem:smdecaypsi}
	Let $0< \delta \ll {1/2} $ be a small parameter, then for all $t\in [t_0, T^*)$ we have
	\begin{equation}\label{eq:point-2d-3}
		\sum_{|I|\leq N-3}| \widehat{\Gamma}^I \psi | \lesssim  C_1 \epsilon^{1/2} \langle t+r\rangle^{-1/2+\delta}.
	\end{equation}
	\end{lemma}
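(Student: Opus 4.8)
The plan is to follow the argument used for Lemma~\ref{smalldecayesti3d} in the 3D case: first upgrade the pointwise bound on $\partial\widehat{\Gamma}^I\psi$ by trading the $\langle t-r\rangle$-decay coming from the extra vector field against the $\epsilon$-smallness coming from \eqref{eq:point-2d-2}, and then recover $\widehat{\Gamma}^I\psi$ itself by integrating in $r$ from spatial infinity. This uses only linear estimates; the structure of the nonlinearity plays no role.

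Fix $|I|\le N-3$. Combining Proposition~\ref{prop:extradecay}, which gives $\langle t-r\rangle|\partial\widehat{\Gamma}^I\psi|\lesssim\sum_{|J|\le 1}|\widehat{\Gamma}^J\widehat{\Gamma}^I\psi|$, with the pointwise bound \eqref{eq:point-2d-1} (applicable since $|I|+1\le N-2$), one obtains the fast-decaying estimate
\begin{equation*}
|\partial\widehat{\Gamma}^I\psi|(t,x)\lesssim C_1\langle t+r\rangle^{-1/2}\langle t-r\rangle^{-3/2}.
\end{equation*}
On the other hand, \eqref{eq:point-2d-2} directly supplies the small but slowly decaying bound $|\partial\widehat{\Gamma}^I\psi|\lesssim C_1\epsilon\langle t+r\rangle^{-1/2}\langle t-r\rangle^{-1/2}$. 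Taking the geometric mean of these two with equal weights, and then using $\langle t-r\rangle\le\langle t+r\rangle$ to convert one power of $\langle t-r\rangle^{-1}$ into $\langle t+r\rangle^{-1+\delta}\langle t-r\rangle^{-\delta}$ times a harmless constant, I get
\begin{equation*}
|\partial\widehat{\Gamma}^I\psi|(t,x)\lesssim C_1\epsilon^{1/2}\langle t+r\rangle^{-1/2}\langle t-r\rangle^{-1}\lesssim C_1\epsilon^{1/2}\langle t+r\rangle^{-1/2+\delta}\langle t-r\rangle^{-1-\delta}.
\end{equation*}

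Since $\widehat{\Gamma}^I\psi(t,x)\to 0$ as $|x|\to+\infty$, the fundamental theorem of calculus gives
\begin{equation*}
|\widehat{\Gamma}^I\psi|(t,x)\le\int_{|x|}^{+\infty}|\partial_r\widehat{\Gamma}^I\psi(t,y)|\,d|y|\lesssim C_1\epsilon^{1/2}\int_r^{+\infty}\langle t+\rho\rangle^{-1/2+\delta}\langle t-\rho\rangle^{-1-\delta}\,d\rho,
\end{equation*}
and because $\langle t+\rho\rangle^{-1/2+\delta}$ is decreasing in $\rho$, it is bounded above by its value at $\rho=r$ and can be taken out of the integral, leaving the convergent factor $\int_r^{+\infty}\langle t-\rho\rangle^{-1-\delta}\,d\rho\lesssim\delta^{-1}$; this yields \eqref{eq:point-2d-3}, and summing over $|I|\le N-3$ finishes the proof.

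I do not expect a genuine obstacle here. The only delicate point is the choice of the interpolation weight: the equal-weight choice is exactly what produces the factor $\epsilon^{1/2}$, but it leaves the borderline, non-integrable weight $\langle t-r\rangle^{-1}$ in the $r$-integral; the mild loss of $\langle t+r\rangle^{\delta}$, absorbed through $\langle t-r\rangle\le\langle t+r\rangle$, is precisely what is needed to make that radial integral converge. This is why the lemma is stated with the rate $\langle t+r\rangle^{-1/2+\delta}$ rather than $\langle t+r\rangle^{-1/2}$, and—unlike the 3D case, where the base decay $\langle t+r\rangle^{-1}$ left room for a further interpolation—no additional $\langle t-r\rangle$-decay can be salvaged for $\widehat{\Gamma}^I\psi$ in two dimensions.
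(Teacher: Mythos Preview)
Your proof is correct. It mirrors the 3D argument of Lemma~\ref{smalldecayesti3d} step by step: upgrade $|\partial\widehat{\Gamma}^I\psi|$ via Proposition~\ref{prop:extradecay} and~\eqref{eq:point-2d-1}, interpolate with the $\epsilon$-small bound~\eqref{eq:point-2d-2}, trade a $\langle t-r\rangle^{-\delta}$ for $\langle t+r\rangle^{\delta}$, and integrate radially. Every step checks out, including the regularity accounting ($|I|+1\le N-2$ is exactly what is available).

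The paper takes a slightly different, more direct route: it applies the fundamental theorem of calculus to $|\widehat{\Gamma}^I\psi|^2$ rather than to $|\widehat{\Gamma}^I\psi|$, yielding
\[
|\widehat{\Gamma}^I\psi|^2(t,x)\lesssim\int_{|x|}^{+\infty}|\widehat{\Gamma}^I\psi|\,|\partial_r\widehat{\Gamma}^I\psi|\,d|y|,
\]
into which it inserts~\eqref{eq:point-2d-1} and~\eqref{eq:point-2d-2} directly. This produces the product $C_1^2\epsilon\langle t+|y|\rangle^{-1}\langle t-|y|\rangle^{-1}$ without any explicit interpolation and without invoking Proposition~\ref{prop:extradecay}; the square-root at the end is what furnishes the $\epsilon^{1/2}$. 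In effect the paper's squaring trick performs your interpolation implicitly. Your approach has the advantage of being a verbatim transplant of the 3D argument; the paper's is marginally more economical since it skips the auxiliary derivative bound.
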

	\begin{proof}
We assume $|I|\leq N-3$. 
Since the decay rate for $\psi$ is slower in 2D compared with 3D case, we need to modify the proof used in showing \eqref{eq:point-2d-3}.
Noting that $|\widehat{\Gamma}^{I} \psi|^2 (t, |x|=+\infty) = 0$ , by the fundamental theorem of calculus we know for all fixed $t\in [t_0, T)$ it holds that 
\begin{align*}
|\widehat{\Gamma}^{I} \psi|^2 (t, x)
&\lesssim
\int_{|x|}^{+\infty} |\widehat{\Gamma}^{I} \psi (t, y) | |\partial_r \widehat{\Gamma}^{I} \psi (t, y) | \, d|y|
\\
& \lesssim \int_{|x|}^{+\infty} |\widehat{\Gamma}^{I} \psi (t, y) | |\nabla \widehat{\Gamma}^{I} \psi (t, y) | \, d|y|.
\end{align*}

	Inserting \eqref{eq:point-2d-1} and \eqref{eq:point-2d-2} to this inequality we obtain
	$$
	\aligned
	|\widehat{\Gamma}^{I} \psi|^2 (t, x)
&\lesssim
C_1^2 \epsilon \langle t + r\rangle^{-1+2\delta} \int_{|x|}^{+\infty} \langle t-|y|\rangle^{-1-2\delta} \, d|y|
\\
&\lesssim
C_1^2 \epsilon \langle t+r\rangle^{-1+2\delta}.
\endaligned
	$$
	
The proof is done.
\end{proof}
	
	The following result indicates that $[\psi]_-$ decays faster than $\psi$ in time. We act $\widehat{\Gamma}^I$  to the equation \eqref{eq:Soler} and use \eqref{eq:vectorfield} to get
	\begin{align}
	-i\gamma^\mu \del_\mu \widehat{\Gamma}^I \psi &= \sum_{|L| \leq |I|} c_{L,I}\widehat{\Gamma}^L\big((\psi^* H \psi)  \psi\big), 
	\quad |I|\leq N. \label{eq:Dirac-2d-1}
	\end{align}
\begin{lemma}\label{lem:pointghost}
	Let $0< \delta \ll {1/2} $ be a small parameter. It holds for all $t\in [t_0, T^*)$ that
	\begin{align}
\sum_{|I|\leq N-3}\big|[\widehat{\Gamma}^{I} \psi]_-\big|	& \lesssim \min\big\{  C_1 \langle t+r\rangle^{-1+\delta}, C_1 \epsilon^{1/4} \langle t + r \rangle^{-3/4 + \delta} \big\},\label{eq:psi--point}
\\
\sum_{|J|\leq N-4}\big|[\widehat{\Gamma}^{J}  \partial \psi]_-\big|	& \lesssim C_1^3 \epsilon \langle t+r\rangle^{-1+\delta}. \label{eq:psi--point2} 
	\end{align}
	\end{lemma}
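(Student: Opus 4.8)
The plan is to prove the two pointwise bounds for $[\widehat{\Gamma}^I\psi]_-$ and $[\widehat{\Gamma}^J\partial\psi]_-$ by splitting spacetime into the interior light-cone region $\{t/2\le r\le 2t\}$ and its complement, exactly as in Lemma \ref{lem:refineghost} for the 3D case. In the exterior region $\{r\ge 2t\}\cup\{r\le t/2\}$ we have $\langle t-r\rangle\sim\langle t+r\rangle$, so the estimates follow immediately: for \eqref{eq:psi--point} apply $|[\widehat{\Gamma}^I\psi]_-|\lesssim|\widehat{\Gamma}^I\psi|$ together with the two bounds \eqref{eq:point-2d-1} and \eqref{eq:point-2d-3}; for \eqref{eq:psi--point2} apply $|[\widehat{\Gamma}^J\partial\psi]_-|\lesssim|\widehat{\Gamma}^J\partial\psi|$ together with \eqref{eq:point-2d-2}, noting that the cube of the $C_1$-decay from the nonlinearity gives the claimed $C_1^3\epsilon$ factor once we account for the source term (see below).

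For the interior region the tool is Lemma \ref{lem:D-Point-improve}, which, applied to $\widehat{\Gamma}^I\psi$ solving \eqref{eq:Dirac-2d-1}, gives
$$
\big|\partial_r[\widehat{\Gamma}^I\psi]_-\big|(t,x)\lesssim \frac{1}{\langle t\rangle}\sum_{|K|\le 1}|\widehat{\Gamma}^K\widehat{\Gamma}^I\psi|+\sum_{|L|\le|I|}\big|\widehat{\Gamma}^L\big((\psi^*H\psi)\psi\big)\big|,\qquad t/2\le r\le 2t.
$$
The first term is controlled by $\langle t\rangle^{-1}\cdot C_1\langle t+r\rangle^{-1/2}\langle t-r\rangle^{-1/2}$ via Proposition \ref{prop:pointwise-2d} (using $|I|\le N-3$, so $|K\!+\!I|\le N-2$). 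For the cubic source, the Leibniz rule (Lemma \ref{lem:preserve}) plus the pointwise bounds \eqref{eq:point-2d-1}, \eqref{eq:point-2d-3} distribute at most one high-order factor (bounded by $C_1\langle t+r\rangle^{-1/2}\langle t-r\rangle^{-1/2}$) and at least two low-order factors (each bounded by $C_1\epsilon^{1/2}\langle t+r\rangle^{-1/2+\delta}$), yielding $\lesssim C_1^3\epsilon\,\langle t+r\rangle^{-3/2+2\delta}\langle t-r\rangle^{-1/2}\ll\langle t+r\rangle^{-3/2+\delta}\langle t-r\rangle^{-1/2}$ since $C_1^6\epsilon\ll 1$. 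Hence $|\partial_r[\widehat{\Gamma}^I\psi]_-|\lesssim C_1\langle t+r\rangle^{-3/2}\langle t-r\rangle^{-1/2}$. One then interpolates this with the rough pointwise bound on $\partial\widehat{\Gamma}^I\psi$ coming from \eqref{eq:point-2d-2} (so as to convert a fraction $\theta$ of the decay into a power of $\epsilon$): for the first bound in \eqref{eq:psi--point} take $\theta=0$, for the second take $\theta=\tfrac14$, to get $|\partial_r[\widehat{\Gamma}^I\psi]_-|\lesssim C_1\epsilon^{1/4}\langle t+r\rangle^{-5/4+\delta/4}\langle t-r\rangle^{-1-\delta}$, say. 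Integrating inward from $r=2t$ (where the exterior bound already applies) along $\partial_r$ and absorbing the $\langle t-r\rangle$ weights into an $O(1)$ factor gives $|[\widehat{\Gamma}^I\psi]_-|(t,x)\lesssim C_1\epsilon^{1/4}\langle t+r\rangle^{-3/4+\delta}$ in the interior, completing \eqref{eq:psi--point}.

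The bound \eqref{eq:psi--point2} for $[\widehat{\Gamma}^J\partial\psi]_-$ with $|J|\le N-4$ is handled identically: differentiate \eqref{eq:Dirac-2d-1} once more to get the equation $-i\gamma^\mu\partial_\mu(\widehat{\Gamma}^J\partial\psi)=\sum_{|K|\le|J|}c_{K,J}\widehat{\Gamma}^K\partial\big((\psi^*H\psi)\psi\big)$, apply Lemma \ref{lem:D-Point-improve}; the "$\langle t\rangle^{-1}\times$ lower-order energy" term is now $\lesssim C_1\epsilon\langle t+r\rangle^{-3/2}\langle t-r\rangle^{-1/2}$ by \eqref{eq:point-2d-2} (valid since $|K\!+\!J|\le N-3$), and the differentiated cubic source, having one extra derivative on a cubic expression each of whose factors carries $C_1\epsilon$-type decay (one factor differentiated via \eqref{eq:point-2d-2}, two factors via \eqref{eq:point-2d-3}, or permutations), is $\lesssim C_1^3\epsilon^2\langle t+r\rangle^{-3/2+2\delta}\langle t-r\rangle^{-1/2}$; either way $|\partial_r[\widehat{\Gamma}^J\partial\psi]_-|\lesssim C_1\epsilon\langle t+r\rangle^{-3/2}\langle t-r\rangle^{-1/2}$, and integrating in from $r=2t$ (using the exterior bound $|[\widehat{\Gamma}^J\partial\psi]_-|\lesssim C_1\epsilon\langle t+r\rangle^{-1}$ there, which already absorbs into the claimed $C_1^3\epsilon$) yields \eqref{eq:psi--point2}.

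The main obstacle is purely bookkeeping of the derivative counts and the powers of $C_1$ and $\epsilon$: one must check that distributing $\widehat{\Gamma}^I$ (or $\widehat{\Gamma}^J\partial$) across the cubic nonlinearity via Lemma \ref{lem:preserve} never forces \emph{two} factors to the top order $N-2$ when $|I|\le N-3$, so that at least two of the three factors enjoy the small $\epsilon^{1/2}$ pointwise bound \eqref{eq:point-2d-3}; and that all source contributions are, after using $C_1^6\epsilon\ll 1$, strictly dominated by the linear-propagation term $C_1\langle t+r\rangle^{-3/2}\langle t-r\rangle^{-1/2}$, so that the interpolation-plus-integration step closes with the same $\epsilon$-power as in the exterior region. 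No structure (null form) is needed here — the ghost-weight null structure enters later in the energy estimate, not in this pointwise lemma.
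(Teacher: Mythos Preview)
Your proposal is correct and follows the same overall architecture as the paper: split into the exterior region $\{r\le t/2\}\cup\{r\ge 2t\}$ (where $\langle t-r\rangle\sim\langle t+r\rangle$ makes the bounds immediate) and the interior region, and in the latter apply Lemma~\ref{lem:D-Point-improve} to bound $\partial_r[\,\cdot\,]_-$, then integrate in $r$ from the boundary $r=2t$. The treatment of \eqref{eq:psi--point2} matches the paper's argument line by line.

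The one genuine difference lies in how you obtain the second (small-in-$\epsilon$) bound in \eqref{eq:psi--point}. You interpolate at the level of the \emph{radial derivative}: between the bound $|\partial_r[\widehat{\Gamma}^I\psi]_-|\lesssim C_1\langle t+r\rangle^{-3/2}\langle t-r\rangle^{-1/2}$ coming from Lemma~\ref{lem:D-Point-improve} and the bound $|\partial_r[\widehat{\Gamma}^I\psi]_-|\lesssim|\partial\widehat{\Gamma}^I\psi|\lesssim C_1\epsilon\langle t+r\rangle^{-1/2}\langle t-r\rangle^{-1/2}$ from \eqref{eq:point-2d-2}, and only then integrate. (This tacitly uses $\partial_r(x_a/r)=0$, so that $\partial_r[\phi]_-=[\partial_r\phi]_-$, which is harmless.) The paper instead first integrates to get the first bound $|[\widehat{\Gamma}^I\psi]_-|\lesssim C_1\langle t+r\rangle^{-1+\delta}$, and then interpolates \emph{that} directly with the trivial inequality $|[\widehat{\Gamma}^I\psi]_-|\lesssim|\widehat{\Gamma}^I\psi|\lesssim C_1\epsilon^{1/2}\langle t+r\rangle^{-1/2+\delta}$ from Lemma~\ref{lem:smdecaypsi}. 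Your route mirrors the 3D argument (Lemma~\ref{lem:refineghost}) more closely; the paper's route is slightly shorter because it recycles the already-established first bound and avoids a second integration. Both are valid and yield the same $C_1\epsilon^{1/4}\langle t+r\rangle^{-3/4+\delta}$.
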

	\begin{proof}
It is easy to see that \eqref{eq:psi--point} and \eqref{eq:psi--point2} hold  in the region $\{ r\leq {t/2} \} \bigcup \{ r \geq 2t \}$.	 Thus below we will only consider the case of $t/2 \leq r \leq 2t$.

	\textbf{Step 1: Proof of \eqref{eq:psi--point}.} Fix $|I| \leq N-3$,
by Lemma \ref{lem:D-Point-improve}, we know for $t/2 \leq r \leq 2t$ it holds
	\begin{align*}
	\big|\partial_r [\widehat{\Gamma}^{I} \psi]_-\big|
	&\lesssim
	\langle t+r\rangle^{-1} \big( \big|\widehat{\Gamma} \widehat{\Gamma}^I \psi \big| + \big| \widehat{\Gamma}^I \psi \big| \big)
	+ \sum_{|L|\leq |I|}\big|\widehat{\Gamma}^L \big((\psi^* H \psi)  \psi\big) \big|
	\\
	&\lesssim
	(C_1 + C_1^3 \epsilon^{1/2}) \langle t+r\rangle^{-1+\delta} \langle t-r\rangle^{-1-\delta}. 
	\end{align*}
	Next, we apply  the fundamental theorem of calculus for all fixed $t\in [t_0, T)$ and the fact $|[\widehat{\Gamma}^{I} \psi]_-(t, |x|=2t)|\lesssim C_1 \langle t+|x|\rangle^{-1}$ to get
	\begin{align*}
	\big|[\widehat{\Gamma}^{I} \psi]_-\big|
	&\lesssim
		\big|[\widehat{\Gamma}^{I} \psi]_-\big|(t, |x|=2t)
	+\int_{|x|}^{+\infty} 	\big|\partial_r [\widehat{\Gamma}^{I} \psi]_-\big|(t, y) \,  d|y|
	\\
&\lesssim  C_1 \langle t+|x|\rangle^{-1}
+(C_1 + C_1^3 \epsilon^{1/2}) \int_{|x|}^{+\infty} \langle t+|y|\rangle^{-1+\delta} \langle t-|y|\rangle^{-1-\delta} \, d|y|
\\
	&\lesssim (C_1 + C_1^3 \epsilon^{1/2}) \langle t+|x|\rangle^{-1+\delta}.
	\end{align*}
By the fact $C_1^6 \epsilon \ll 1$, we arrive at 
\begin{align} \label{eq:ghostde2d1}
	\big|[\widehat{\Gamma}^{I} \psi]_-\big|
	&\lesssim C_1\langle t+|x|\rangle^{-1+\delta}.
\end{align}
Furthermore, using  Lemma \ref{lem:smdecaypsi}, we also have 
\begin{align} \label{eq:ghostde2d2}
	\big|[\widehat{\Gamma}^{I} \psi]_-\big|
	&\lesssim |\widehat{\Gamma}^{I} \psi| \lesssim C_1 \epsilon^{1/2} \langle t+r\rangle^{-1/2+\delta}.
\end{align} 
Taking interpolation between \eqref{eq:ghostde2d1} and \eqref{eq:ghostde2d2}, one can deduce that 
\begin{align*}
	\big|[\widehat{\Gamma}^{I} \psi]_-\big|
	&\lesssim C_1 \epsilon^{1/4} \langle t + r \rangle^{-3/4 + \delta}.
\end{align*}
So \eqref{eq:psi--point} is proved.

	\textbf{Step 2: Proof of \eqref{eq:psi--point2}.} Fix $|J| \leq N-4.$
	By Lemma \ref{lem:D-Point-improve}, we know for $t/2 \leq r \leq 2t$ it holds
	\begin{align*}
	\big|\partial_r [\widehat{\Gamma}^{J} \partial \psi]_-\big|
	&\lesssim
	\langle t+r\rangle^{-1} \big( \big|\widehat{\Gamma} \widehat{\Gamma}^J \partial \psi \big| + \big| \widehat{\Gamma}^J \partial \psi \big| \big)
	+ \big|\widehat{\Gamma}^J \partial \big((\psi^* H \psi)  \psi\big) \big|
	\\
	&\lesssim
	C_1^3 \epsilon \langle t+r\rangle^{-1+\delta} \langle t-r\rangle^{-1-\delta}. 
	\end{align*}
	Next, we apply  the fundamental theorem of calculus for all fixed $t\in [t_0, T)$ and the fact $|[\widehat{\Gamma}^{J} \partial \psi]_-(t, |x|=2t)|\lesssim C_1 \epsilon \langle t+|x|\rangle^{-1+\delta}$(see \eqref{eq:point-2d-2}) to get
	\begin{align*}
	\big|[\widehat{\Gamma}^{J} \partial \psi]_-\big|
	&\lesssim
	|[\widehat{\Gamma}^{J} \partial \psi]_-(t, |x|=2t)|
+	\int_{|x|}^{+\infty} 	\big|\partial_r [\widehat{\Gamma}^{J} \partial \psi]_-\big|(t, y) \,  d|y|
	\\
&\lesssim C_1 \epsilon \langle t+|x|\rangle^{-1 + \delta}
+C_1^3 \epsilon \int_{|x|}^{+\infty} \langle t+|y|\rangle^{-1+\delta} \langle t-|y|\rangle^{-1-\delta} \, d|y|
\\
	&\lesssim C_1^3 \epsilon \langle t+|x|\rangle^{-1+\delta}.
	\end{align*}
	
	The proof is completed.
	\end{proof}	
	Next, we use energy estimates to improve the bounds in \eqref{eq:BA2d}. We act  $\widehat{\Gamma}^I$ and  $\widehat{\Gamma}^J \partial $ respectively to the equation \eqref{eq:Soler} to get
	\begin{align}
	-i\gamma^\mu \del_\mu \widehat{\Gamma}^I  \psi &= \sum_{|L|\leq |I|} c_{L,I}\widehat{\Gamma}^L  \big((\psi^* H \psi) \psi\big),  \label{eq:Dirac-2dderi}
\quad |I|\leq N. \\
-i\gamma^\mu \del_\mu \widehat{\Gamma}^J \partial \psi &= \sum_{|K|\leq |J|}c_{K,J}\widehat{\Gamma}^K \partial \big((\psi^* H \psi) \psi\big), 
\quad |J|\leq N-1. \label{eq:Dirac-2d-2}
	\end{align}
	
	\begin{proposition}	\label{prop:improved-2d-rough}
	Let $N\geq 7$, if the estimates in \eqref{eq:BA2d} hold, for all $t\in [t_0, T^*)$ we have
	\begin{align}
	E(\widehat{\Gamma}^I \psi, t ) &\lesssim  1 + C_1^4 \epsilon^{1/2},
\qquad |I|\leq N,   \label{eq:2d-rough1}
\\
 E(\widehat{\Gamma}^J \partial \psi, t ) &\lesssim \epsilon^2 + C_1^{6} \epsilon^{9/4},
\qquad |J|\leq N-1.  \label{eq:2d-rough2}
	\end{align}

\end{proposition}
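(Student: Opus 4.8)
The plan is to run the ghost-weight energy estimate from Proposition \ref{prop:DiracEE} on the two commuted Dirac equations \eqref{eq:Dirac-2dderi} and \eqref{eq:Dirac-2d-2}, and to control the resulting spacetime integrals of $\big|\widehat\Gamma^I\psi^*\gamma^0 \widehat\Gamma^L((\psi^*H\psi)\psi)\big|$ by splitting the Leibniz expansion of $\widehat\Gamma^L((\psi^*H\psi)\psi)$ into a ``high--low'' piece (at most one factor carries close to the top number of vector fields, the others at most $N-3$) and pairing the top-order factor in $L^2$ against the lower-order ones in $L^\infty$. The initial-data contribution is handled by Lemma \ref{lem:ID-2d}, giving the $1$ and $\epsilon^2$ terms respectively. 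The crucial structural input is that $\psi^*H\psi$ — and more generally each commuted version of it — can, via Lemma \ref{lem:decompose} (applied with $H$ Hermitian, so $\Psi^*H\Phi$ decomposes into terms each containing at least one $[\,\cdot\,]_-$ factor, up to harmless constant matrices), be written so that every term has a $[\widehat\Gamma^{I_1}\psi]_-$ or $[\widehat\Gamma^{I_2}\psi]_-$ factor. This is what lets us exploit the improved pointwise bounds in Lemma \ref{lem:pointghost} and the improved $L^2$-in-time ghost bounds in Proposition \ref{prop:L2-2d}.

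First I would treat \eqref{eq:2d-rough1}. After the energy estimate,
\[
E(\widehat\Gamma^I\psi,t)\lesssim 1+\sum_{|L|\le|I|}\int_{t_0}^t\|\widehat\Gamma^I\psi\|\,\big\|\widehat\Gamma^L((\psi^*H\psi)\psi)\big\|\,ds.
\]
Expand by Leibniz; in each term at most one of the three $\psi$-slots carries more than $N-3\ge 4$ vector fields (since $|L|\le N$ and $N\ge 7$). In the worst case the surviving top factor is some $\widehat\Gamma^{I_0}\psi$ with $|I_0|\le N$, which I bound in $L^2$ by $C_1$ (Proposition \ref{prop:L2-2d}); one of the two remaining low-order factors can be taken to be a ghost factor $[\widehat\Gamma^{I_1}\psi]_-$ with $|I_1|\le N-3$, bounded pointwise by $C_1\epsilon^{1/4}\langle t+r\rangle^{-3/4+\delta}$ (Lemma \ref{lem:pointghost}); the last low factor $\widehat\Gamma^{I_2}\psi$ with $|I_2|\le N-3$ is bounded by $C_1\langle t+r\rangle^{-1/2}\langle t-r\rangle^{-1/2}$ (Proposition \ref{prop:pointwise-2d}). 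This produces a time factor $C_1^2\epsilon^{1/4}\langle s\rangle^{-5/4+\delta}$ times $\|\widehat\Gamma^{I_0}\psi\|\le C_1$, which is integrable; alternatively, when the top factor is itself a ghost factor one pairs the $\langle s-r\rangle^{-1/2-\delta}$-weighted ghost $L^2$ norm (bounded in $L^2_t$ by Proposition \ref{prop:L2-2d}) against $\langle s-r\rangle^{1/2+\delta}$ times the remaining low-order $L^\infty$ factors, and closes by Cauchy–Schwarz in $s$. Summing up gives $E(\widehat\Gamma^I\psi,t)\lesssim 1+C_1^4\epsilon^{1/2}$ (one loses one more power of $C_1$ and a bit of smallness from the $L^2_t$ ghost norm carrying a $C_1$).

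For \eqref{eq:2d-rough2} the scheme is identical but one factor now carries an extra $\partial$. Acting $\widehat\Gamma^J\partial$ on the nonlinearity and distributing, the $\partial$ lands on exactly one $\psi$; the slot it lands on is then treated with $[\widehat\Gamma^{K}\partial\psi]_-$ bounds (Lemma \ref{lem:pointghost}, giving $C_1^3\epsilon\langle t+r\rangle^{-1+\delta}$ pointwise, or the $L^2$-in-time ghost bound $\lesssim C_1\epsilon$ from Proposition \ref{prop:L2-2d}) whenever it is a low-order factor, and the remaining factors by Lemma \ref{lem:smdecaypsi} ($C_1\epsilon^{1/2}\langle t+r\rangle^{-1/2+\delta}$) and Proposition \ref{prop:pointwise-2d}. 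When $\partial$ hits the top-order factor, one pairs $\|\widehat\Gamma^J\partial\psi\|\le C_1\epsilon$ in $L^2$ against the product of two low-order factors, at least one of which is a ghost, so the time weight is $\langle s\rangle^{-1+\delta}\times\langle s\rangle^{-1/2+\delta}$ or better, again integrable after accounting for the $\langle s-r\rangle^{1/2+\delta}$ factors via Cauchy–Schwarz. Every term carries at least $\epsilon^{9/4}$ (one $\epsilon$ from $\partial\psi$-type factors, an $\epsilon^{1/2}$ from Lemma \ref{lem:smdecaypsi}, and $\epsilon^{1/4}$ or better from a ghost, or $\epsilon^2$ from the pure initial-data term multiplied by smallness), yielding $E(\widehat\Gamma^J\partial\psi,t)\lesssim\epsilon^2+C_1^6\epsilon^{9/4}$.

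The main obstacle is the \emph{borderline time decay}: in 2D the basic $L^\infty$ bound on $\widehat\Gamma^I\psi$ decays only like $\langle t+r\rangle^{-1/2}$, so a naive estimate of a single nonlinear term produces $\langle s\rangle^{-1}$ inside the time integral — logarithmically divergent. Closing the estimate \emph{forces} the use of one genuine ghost factor $[\,\cdot\,]_-$ per term (its $\langle t+r\rangle^{-3/4+\delta}$ pointwise decay, or its $L^2_t$ integrability against $\langle s-r\rangle^{-1/2-\delta}$), so the bookkeeping of which slot can be designated the ``ghost'' slot while another is the ``top-order'' slot — and verifying that this is always possible given $|L|\le N$, $N\ge7$ — is the delicate part. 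A secondary point is that the $\langle s-r\rangle^{\pm(1/2+\delta)}$ weights must be matched exactly: the low-order factor paired against a ghost $L^2$ norm must absorb a $\langle s-r\rangle^{1/2+\delta}$, which is supplied by Proposition \ref{prop:pointwise-2d} only because $\langle s-r\rangle^{1/2+\delta}\langle t-r\rangle^{-1/2}$ stays bounded — so one needs $\delta$ small and must be careful that no term ends up needing two ghost factors simultaneously.
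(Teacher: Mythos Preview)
There is a genuine gap in your plan: you have misidentified where the null structure lives. Lemma~\ref{lem:decompose} is stated and proved only for $\Psi^*\gamma^0\Phi$; the vanishing of the ``bad'' term $[\Psi]_+^*\gamma^0[\Phi]_+$ relies on the specific anticommutation relations between $\gamma^0$ and $\gamma^0\gamma^a$, and it is \emph{false} that $\Psi^*H\Phi$ enjoys an analogous decomposition for a general Hermitian $H$ (take $H=I$: then $\psi^*\psi=|\psi|^2$ has a non-vanishing $|[\psi]_+|^2$ piece). So your sentence ``$\psi^*H\psi$ \dots can, via Lemma~\ref{lem:decompose} \dots be written so that every term has a $[\widehat\Gamma^{I_1}\psi]_-$ or $[\widehat\Gamma^{I_2}\psi]_-$ factor'' is simply wrong, and with it the whole mechanism you rely on to beat the borderline $\langle s\rangle^{-1}$ decay.

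The paper avoids this by \emph{not} applying Cauchy--Schwarz at the first step. It keeps the full $L^1$ integrand
\[
\big\|(\widehat\Gamma^I\psi)^*\gamma^0\,\widehat\Gamma^L\big((\psi^*H\psi)\psi\big)\big\|_{L^1},
\]
then uses Lemma~\ref{lem:preserve} to write $\widehat\Gamma^L\big((\psi^*H\psi)\psi\big)=\sum \Gamma^{I_2}(\psi^*H\psi)\,\widehat\Gamma^{I_1}\psi$, pulling the scalar $\Gamma^{I_2}(\psi^*H\psi)$ out and leaving the genuine $\gamma^0$-bilinear $(\widehat\Gamma^I\psi)^*\gamma^0\widehat\Gamma^{I_1}\psi$. \emph{That} is where Lemma~\ref{lem:decompose} is applied, producing either $[\widehat\Gamma^I\psi]_-$ (top order, handled via the $L^2_t$ ghost norm) or $[\widehat\Gamma^{I_1}\psi]_-$ (low order, handled pointwise via Lemma~\ref{lem:pointghost}). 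The factor $\Gamma^{I_2}(\psi^*H\psi)$ is then treated as a product of two $\psi$-type factors with no null structure at all. Your early split $\|\widehat\Gamma^I\psi\|\,\|\widehat\Gamma^L(\cdots)\|$ destroys exactly the $\gamma^0$ that makes the argument work; once you undo that and relocate the ghost to the correct bilinear, the rest of your outline (high--low splitting, $\langle s-r\rangle^{1/2+\delta}$ weights, Cauchy--Schwarz in $s$) matches the paper.
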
	
	\begin{proof}
	\textbf{Step 1: Proof of \eqref{eq:2d-rough1}.}
	
	Let $N\geq 7$, for any multi-index $|I| \leq N$, we apply the energy estimates  in Proposition \ref{prop:DiracEE} on \eqref{eq:Dirac-2dderi} to get for all $t\in [t_0, T^*)$ that
	\begin{align*}
	E(\widehat{\Gamma}^I \psi, t )
	&\lesssim E(\widehat{\Gamma}^I \psi, t_0 ) 
	+\sum_{|L|\leq |I|} \int_{t_0}^t \big\| (\widehat{\Gamma}^I \psi)^* \gamma^0  \widehat{\Gamma}^L \big((\psi^* H \psi)  \psi\big)  \big\|_{L^1} \, ds.
	\end{align*}
	Next, we focus on the bound of $\sum_{|L|\leq |I|}\big\| (\widehat{\Gamma}^I \psi)^* \gamma^0  \widehat{\Gamma}^L \big((\psi^* H \psi) \psi\big)  \big\|_{L^1}$. By Lemma \ref{lem:preserve}, we have
	\begin{align*}
	&\sum_{|L|\leq |I|} \big\| (\widehat{\Gamma}^I \psi)^* \gamma^0  \widehat{\Gamma}^L \big((\psi^* H \psi) \psi\big)  \big\|_{L^1} 
	\\
	\lesssim
	&\sum_{|I_1|+|I_2|\leq |I|} \big\| \big|(\widehat{\Gamma}^I \psi)^* \gamma^0 \widehat{\Gamma}^{I_1} \psi\big| \, \big|{\Gamma}^{I_2} (\psi^* H \psi)\big|  \big\|_{L^1} 
	\\
	\lesssim
	&\sum_{|I_1|\leq N-3,\, |I_2|\leq N} \big\| \big|(\widehat{\Gamma}^I \psi)^* \gamma^0 \widehat{\Gamma}^{I_1} \psi\big| \, \big|{\Gamma}^{I_2} (\psi^* H \psi)\big|  \big\|_{L^1} 
	\\
	+
	&\sum_{|I_1|\leq N, \, |I_2|\leq N-3} \big\| \big|(\widehat{\Gamma}^I \psi)^* \gamma^0 \widehat{\Gamma}^{I_1} \psi\big| \, \big|{\Gamma}^{I_2} (\psi^* H \psi)\big|  \big\|_{L^1} 
=: B_1 + B_2.
	\end{align*}
	Applying Lemma \ref{lem:decompose}, one can see
	\begin{align*}
	B_1
	\lesssim
	&\sum_{|I_1|\leq N-3,\, |I_2|\leq N} \big\| \big|[\widehat{\Gamma}^I \psi]_- \big| \, \big|\widehat{\Gamma}^{I_1} \psi\big| \, \big|{\Gamma}^{I_2} (\psi^* H \psi)\big|  \big\|_{L^1} 
	\\
	+
	&\sum_{|I_1|\leq N-3,\, |I_2|\leq N} \big\| \big|\widehat{\Gamma}^I \psi \big| \, \big|[\widehat{\Gamma}^{I_1} \psi]_-\big| \, \big|{\Gamma}^{I_2} (\psi^* H \psi)\big|  \big\|_{L^1} 
	=: B_{11} + B_{12}.
	\end{align*}
	To bound $B_{11}$, we apply  the $L^2$ estimates in Proposition \ref{prop:L2-2d} and pointwise estimates in \eqref{eq:point-2d-1} and \eqref{eq:point-2d-3} to get
	\begin{align*}
	B_{11}
	\lesssim
	&\sum_{\substack{|I_1|\leq N-3,\, |I_2^1|\leq N \\ |I_2^2| \leq N-3}} \bigg\| {[\widehat{\Gamma}^I \psi]_- \over \langle s-r\rangle^{1/2+\delta}}\bigg\| \, \big\|\widehat{\Gamma}^{I_2^1} \psi\big\| \, \big\| \langle s-r\rangle^{1/2+\delta} \widehat{\Gamma}^{I_1} \psi  \big\|_{L^\infty}\|\widehat{\Gamma}^{I_2^2}\psi\|_{L^{\infty}} 
		\\
		\lesssim & C_1^3 \epsilon^{1/2} \langle s\rangle^{-1+2\delta} \bigg\| {[\widehat{\Gamma}^I \psi]_- \over \langle s-r\rangle^{1/2+\delta}}\bigg\|.
	\end{align*}
	For $B_{12}$, one can see 
	\begin{align*}
	B_{12}
	\lesssim
	&\sum_{\substack{|I_1|\leq N-3,\, |I^1_2|\leq N \\ |I_2^2| \leq N-3}} \big\|\widehat{\Gamma}^I \psi \big\| \, \big\|[\widehat{\Gamma}^{I_1} \psi]_-\big\|_{L^\infty} \, \big\|\widehat{\Gamma}^{I_2^1} \psi \big\| \, \big\|\widehat{\Gamma}^{I_2^2} \psi  \big\|_{L^\infty} 
	\\
	\lesssim
	& C_1^4 \epsilon^{1/2} \langle s\rangle^{-3/2+2\delta},
	\end{align*}
		in which we use the $L^2$ estimates in Proposition \ref{prop:L2-2d} and pointwise estimates in  \eqref{eq:point-2d-3} and \eqref{eq:psi--point}.

Again applying Lemma \ref{lem:decompose}, one gets
	\begin{align*}
	B_2
	\lesssim
	&\sum_{|I_1|\leq N, \, |I_2|\leq N-3} \big\| \big|[\widehat{\Gamma}^I \psi]_- \big| \,  \big|\widehat{\Gamma}^{I_1} \psi\big| \, \big|{\Gamma}^{I_2} (\psi^* H \psi)\big|  \big\|_{L^1} 
	\\
	+
	&\sum_{|I_1|\leq N, \, |I_2|\leq N-3} \big\| \big|\widehat{\Gamma}^I \psi \big| \,  \big|[\widehat{\Gamma}^{I_1} \psi]_-\big| \, \big|{\Gamma}^{I_2} (\psi^* H \psi)\big|  \big\|_{L^1} 
	=: B_{21} + B_{22}.
	\end{align*}	
	For $B_{21}$, we have
	\begin{align*}
	B_{21}
	\lesssim
	&\sum_{\substack{|I_1|\leq N, \, |I_2^1|\leq N-3\\|I_2^2| \leq N-3}} \bigg\| {[\widehat{\Gamma}^I \psi]_- \over \langle s-r\rangle^{1/2+\delta}}\bigg\| \,  \big\|\widehat{\Gamma}^{I_1} \psi\big\| \, \big\|\langle s-r\rangle^{1/2+\delta} \widehat{\Gamma}^{I_2^1} \psi  \big\|_{L^\infty}  \big\| \widehat{\Gamma}^{I_2^2} \psi  \big\|_{L^\infty} 
	\\
	\lesssim
	 & C_1^3 \epsilon^{1/2} \langle s\rangle^{-1+2\delta} \bigg\| {[\widehat{\Gamma}^I \psi]_- \over \langle s-r\rangle^{1/2+\delta}}\bigg\|,
	\end{align*}
	in which we use the $L^2$ estimates in Proposition \ref{prop:L2-2d} and pointwise estimates in \eqref{eq:point-2d-1} and \eqref{eq:point-2d-3}.
	Concerning the estimate of $B_{22}$, we derive
	\begin{align*}
	B_{22}
	\lesssim
	&\sum_{\substack{|I_1|\leq N, \, |I_2^1|\leq N-3\\|I_2^2| \leq N-3}}  \big\| \widehat{\Gamma}^I \psi \big\| \,  \bigg\|{[\widehat{\Gamma}^{I_1} \psi]_{-}\over \langle s-r\rangle^{1/2+\delta}}\bigg\| \, \big\|\langle s-r\rangle^{1/2+\delta} \widehat{\Gamma}^{I_2^1} \psi  \big\|_{L^\infty}\big\| \widehat{\Gamma}^{I_2^2} \psi  \big\|_{L^\infty} 
	\\
	\lesssim
	  &C_1^3 \epsilon^{1/2} \langle s\rangle^{-1+2\delta} \sum_{|I_1|\leq N} \bigg\| {[\widehat{\Gamma}^{I_1} \psi]_- \over \langle s-r\rangle^{1/2+\delta}}\bigg\|,
	\end{align*}
		in which we use  the estimates in Proposition \ref{prop:L2-2d}, \eqref{eq:point-2d-1}, and \eqref{eq:point-2d-3}.

	To sum things up, we have
	\begin{align*}
		\sum_{|L|\leq |I|} \big\| (\widehat{\Gamma}^I \psi)^* \gamma^0  \widehat{\Gamma}^L \big((\psi^* H \psi)  \psi\big)  \big\|_{L^1} 
\lesssim
& C_1^3 \epsilon^{1/2} \langle s\rangle^{-1+2\delta} \sum_{|I_1|\leq N} \bigg\| {[\widehat{\Gamma}^{I_1} \psi]_- \over \langle s-r\rangle^{1/2+\delta}}\bigg\| \\ & +  C_1^4 \epsilon^{1/2} \langle s\rangle^{-3/2+2\delta}.
	\end{align*}
	Thus, we get
	\begin{align*}
		E(\widehat{\Gamma}^I \psi, t )
\lesssim
&1 + C_1^4 \epsilon^{1/2}  +C_1^3 \epsilon^{1/2} \sum_{|I_1|\leq N} \int_{t_0}^t  \langle s\rangle^{-1+2\delta}  \bigg\| {[\widehat{\Gamma}^{I_1} \psi]_- \over \langle s-r\rangle^{1/2+\delta}}\bigg\| \, ds
\\
\lesssim
&1 + C_1^4 \epsilon^{1/2}+ C_1^3 \epsilon^{1/2} \sum_{|I_1|\leq N} \Big( \int_{t_0}^t \bigg\| {[\widehat{\Gamma}^{I_1} \psi]_- \over \langle s-r\rangle^{1/2+\delta}}\bigg\|^2 \, ds\Big)^{1/2}  
 \\
 	\lesssim
 &	1 + C_1^4 \epsilon^{1/2}.
	\end{align*}
	This yields \eqref{eq:2d-rough1}.
 
		\textbf{Step 2: Proof of \eqref{eq:2d-rough2}.}

	Fix $N \geq 7$ and $|J|\leq N-1$. To show the second estimate in Proposition \ref{prop:improved-2d-rough}, we apply Proposition \ref{prop:DiracEE} on \eqref{eq:Dirac-2d-2} to derive for all $t\in [t_0, T^*)$ that
	\begin{align}
	E(\widehat{\Gamma}^J \partial \psi, t )
	&\lesssim E(\widehat{\Gamma}^J \partial \psi, t_0 ) 
	+\sum_{|K|\leq |J|}  \int_{t_0}^t \big\| \widehat{\Gamma}^J \partial \psi^* \gamma^0  \widehat{\Gamma}^K \partial \big((\psi^* H \psi)  \psi\big)  \big\|_{L^1} \, ds \nonumber
	\\
	&\lesssim
	\epsilon^2 +\sum_{|J_1|+ |J_2|\leq |J|}   \int_{t_0}^t \big\| \big(\widehat{\Gamma}^J \partial \psi^* \gamma^0 \widehat{\Gamma}^{J_2} \psi \big) {\Gamma}^{J_1} \partial (\psi^* H \psi)   \big\|_{L^1} \, ds \nonumber\\
	&  + \sum_{|J_1|+ |J_2|\leq |J|}   \int_{t_0}^t \big\| \big(\widehat{\Gamma}^J \partial \psi^* \gamma^0  \widehat{\Gamma}^{J_2}\partial \psi \big) {\Gamma}^{J_1} (\psi^* H \psi)    \big\|_{L^1} \, ds. \label{eq:derivaener}
	\end{align}
	One can observe that 
	\begin{align*}
		& \sum_{|J_1|+ |J_2|\leq |J|}   \big\| \big(\widehat{\Gamma}^J \partial \psi^* \gamma^0 \widehat{\Gamma}^{J_2} \psi \big) {\Gamma}^{J_1} \partial (\psi^* H \psi)   \big\|_{L^1}    \\
		& \lesssim \sum_{|J_2|\leq N-1, |J_1| \leq N-3 }    \big\| \big(\widehat{\Gamma}^J \partial \psi^* \gamma^0 \widehat{\Gamma}^{J_2} \psi \big) {\Gamma}^{J_1} \partial (\psi^* H \psi)   \big\|_{L^1}    \\
		& + \sum_{|J_1|\leq N-1, |J_2| \leq N-3 }   \big\| \big(\widehat{\Gamma}^J \partial \psi^* \gamma^0 \widehat{\Gamma}^{J_2} \psi \big) {\Gamma}^{J_1} \partial (\psi^* H \psi)   \big\|_{L^1}   := R_1+ R_2.
	\end{align*}
	Let us first treat $R_1$. We have 
	\begin{align*}
		R_1  & \lesssim  \sum_{ \substack{|J_2|\leq N-1, |J_1^1| \leq N- 3\\ |J_1^2| \leq N-3 } }   \big\| |[\widehat{\Gamma}^J \partial \psi]_{-} |\,|\widehat{\Gamma}^{J_2}\psi| \, |\widehat{\Gamma}^{J_1^1} \partial \psi| \, |\widehat{\Gamma}^{J_1^2} \psi|   \big\|_{L^1}    \\
		 & +  \sum_{ \substack{|J_2|\leq N-1, |J_1^1| \leq N- 3\\ |J_1^2| \leq N- 3} }   \big\| |\widehat{\Gamma}^J \partial \psi |\, |[\widehat{\Gamma}^{J_2}\psi]_{-}| \, |\widehat{\Gamma}^{J_1^1} \partial \psi| \,|\widehat{\Gamma}^{J_1^2} \psi|   \big\|_{L^1}  : = R_1 ^1+ R^2_1.
	\end{align*}
	Then, 
	\begin{align}
		R_1^1 & \lesssim \sum_{ \substack{|J_2|\leq N-1, |J_1| \leq N- 3 } }  \bigg \| \frac{[\widehat{\Gamma}^J \partial \psi]_{-} }{\langle s-r\rangle^{\frac 12 +\delta}} \bigg \| \big \| \widehat{\Gamma}^{J_2}\psi \big \| \big \| \langle s-r\rangle^{\frac 12 +\delta} \widehat{\Gamma}^{J_1} \partial \psi  \big\|_{L^{\infty}} \|\widehat{\Gamma}^{J_1} \psi\|_{L^{\infty}} \nonumber \\
		& \lesssim C_1^3 \epsilon^{\frac 32} \langle s \rangle^{-1+2\delta}\bigg \| \frac{[\widehat{\Gamma}^J \partial \psi]_{-} }{\langle s-r\rangle^{\frac 12 +\delta}} \bigg \|, 
	\end{align}
	in which Propositions \ref{prop:L2-2d}, \ref{prop:pointwise-2d} and Lemma \ref{lem:smdecaypsi} are used.  Regarding the estimate of $R_1^2$, using  Propositions \ref{prop:L2-2d}, \ref{prop:pointwise-2d} and Lemma \ref{lem:smdecaypsi} once again, we have
	\begin{align*}
		R^2_1 & \lesssim \sum_{ |J_2|\leq N-1, |J_1| \leq N- 3 }   \big\| \widehat{\Gamma}^J \partial \psi \big\| \bigg\|\frac{ [\widehat{\Gamma}^{J_2}\psi]_{-}}{\langle s-r\rangle^{\frac 12 +\delta}} \bigg\|  \big\|\langle s-r\rangle^{\frac 12 +\delta}\widehat{\Gamma}^{J_1} \partial \psi \big\|_{L^{\infty}} \big\|\widehat{\Gamma}^{J_1} \psi   \big\|_{L^{\infty}}  \\
		 & \lesssim C_1^3 \epsilon^{\frac 52} \langle s \rangle^{-1+2\delta} \sum_{|J_2|\leq N-1}\bigg\|\frac{ [\widehat{\Gamma}^{J_2}\psi]_{-}}{\langle s-r\rangle^{\frac 12 +\delta}} \bigg\|.
	\end{align*}
	Hence, collecting the estimates of $R_1^1$ and $R^2_1$, we get 
	\begin{align*}
		R_1 \lesssim C_1^3 \epsilon^{\frac 32} \langle s \rangle^{-1+2\delta}\bigg \| \frac{[\widehat{\Gamma}^J \partial \psi]_{-} }{\langle s-r\rangle^{\frac 12 +\delta}} \bigg \|+ C_1^3 \epsilon^{\frac 52} \langle s \rangle^{-1+2\delta} \sum_{|J_2|\leq N-1}\bigg\|\frac{ [\widehat{\Gamma}^{J_2}\psi]_{-}}{\langle s-r\rangle^{\frac 12 +\delta}} \bigg\|.
	\end{align*}
	On the other hand,  one can have 
	\begin{align*}
		R_2 & \lesssim \sum_{\substack{|J_1^1|\leq N-1, |J_2| \leq N-3 \\ |J_1^2| \leq N-3}}   \big\| \big|[\widehat{\Gamma}^J \partial \psi]_{-} \big| \big| \widehat{\Gamma}^{J_2}\psi\big| \big| \widehat{\Gamma}^{J_1^1} \partial \psi\big| \big| \widehat{\Gamma}^{J_1^2}\psi\big|   \big\|_{L^1} \\
		& + \sum_{\substack{|J_1^2|\leq N-1, |J_2| \leq N-3 \\ |J_1^1| \leq N-3}}   \big\| \big|[\widehat{\Gamma}^J \partial \psi]_{-} \big| \big| \widehat{\Gamma}^{J_2}\psi\big| \big| \widehat{\Gamma}^{J_1^1} \partial \psi\big| \big| \widehat{\Gamma}^{J_1^2}\psi\big|   \big\|_{L^1}  \\
		& + \sum_{\substack{|J_1^1|\leq N-1, |J_2| \leq N-3 \\ |J_1^2| \leq N-3}}   \big\| \big|\widehat{\Gamma}^J \partial \psi \big| \big| [ \widehat{\Gamma}^{J_2}\psi]_{-}\big| \big| \widehat{\Gamma}^{J_1^1} \partial \psi\big| \big| \widehat{\Gamma}^{J_1^2}\psi\big|   \big\|_{L^1} \\
		& +  \sum_{\substack{|J_1^2|\leq N-1, |J_2| \leq N-3 \\ |J_1^1| \leq N-3}}   \big\| \big|\widehat{\Gamma}^J \partial \psi \big| \big| [ \widehat{\Gamma}^{J_2}\psi]_{-}\big| \big| \widehat{\Gamma}^{J_1^1} \partial \psi\big| \big| \widehat{\Gamma}^{J_1^2}\psi\big|   \big\|_{L^1}.
	\end{align*}
	Owing to H\"older inequality, Propositions \ref{prop:L2-2d}, \ref{prop:pointwise-2d} and  Lemmas \ref{lem:smdecaypsi}, \ref{lem:pointghost},  we can further bound $R_2$ as 
	\begin{align*}
		R_2 & \lesssim C_1^3 \epsilon^{\frac 32} \langle s\rangle^{-1+2\delta} \bigg\|\frac{ [\widehat{\Gamma}^{J}\partial \psi]_{-}}{\langle s-r\rangle^{\frac 12 +\delta}} \bigg\| + C_1^4 \epsilon^{\frac 52} \langle s\rangle^{-\frac 32 + 2 \delta} + C_1^4 \epsilon^{\frac 94} \langle s \rangle^{-\frac 54 + \delta}  \\
		&  \lesssim C_1^3 \epsilon^{\frac 32} \langle s\rangle^{-1+2\delta} \bigg\|\frac{ [\widehat{\Gamma}^{J}\partial \psi]_{-}}{\langle s-r\rangle^{\frac 12 +\delta}} \bigg\| + C_1^4 \epsilon^{\frac 94} \langle s \rangle^{-\frac 54 + \delta}.
	\end{align*}
	Next, we estimate the other term in \eqref{eq:derivaener}
	\begin{align*}
	 & \sum_{|J_1|+ |J_2|\leq |J|}   \big\| \big(\widehat{\Gamma}^J \partial \psi^* \gamma^0 \widehat{\Gamma}^{J_2}\partial \psi \big) {\Gamma}^{J_1} (\psi^* H \psi)   \big\|_{L^1} 	 \\
	 & \lesssim  \sum_{|J_2|\leq N-1, |J_1|\leq N-4}   \big\| \big(\widehat{\Gamma}^J \partial \psi^* \gamma^0 \widehat{\Gamma}^{J_2}\partial \psi \big) {\Gamma}^{J_1} (\psi^* H \psi)   \big\|_{L^1} \\
	 & \qquad\qquad  +  \sum_{|J_1|\leq N-1, |J_2|\leq N-4}   \big\| \big(\widehat{\Gamma}^J \partial \psi^* \gamma^0 \widehat{\Gamma}^{J_2}\partial \psi \big) {\Gamma}^{J_1} (\psi^* H \psi)   \big\|_{L^1} := Q_1 + Q_2.
	\end{align*}
   In order to bound $Q_1$, we proceed to have
   \begin{align*}
   	Q_1 & \lesssim \sum_{\substack{|J_2|\leq N-1, |J_1^1|\leq N-4\\ |J_1^2| \leq N-4}}   \big\| \big|\big[\widehat{\Gamma}^J \partial \psi \big]_{-} \big| |\widehat{\Gamma}^{J_2}\partial \psi| | \widehat{\Gamma}^{J_1^1} \psi| | \widehat{\Gamma}^{J_1^2}\psi|   \big\|_{L^1} \\
   	  & \qquad  +  \sum_{\substack{|J_2|\leq N-1, |J_1^1|\leq N-4\\ |J_1^2| \leq N-4}}   \big\| \big|\widehat{\Gamma}^J \partial \psi  \big| \big|\big[\widehat{\Gamma}^{J_2}\partial \psi\big]_{-}\big| | \widehat{\Gamma}^{J_1^1} \psi| | \widehat{\Gamma}^{J_1^2}\psi|   \big\|_{L^1} : = Q_1^1 + Q_1^2.
   	     \end{align*}
   	     By virtue of Propositions \ref{prop:L2-2d}, \ref{prop:pointwise-2d}, and Lemma \ref{lem:smdecaypsi}, one can see 
   	     \begin{align*}
   	     	Q_1^1 & \lesssim \sum_{\substack{|J_2|\leq N-1, |J_1^1|\leq N-4\\ |J_1^2| \leq N-4}}   \bigg \|\frac{\big[\widehat{\Gamma}^J \partial \psi \big]_{-}}{\langle s -r \rangle^{\frac 12 + \delta}}  \bigg\| \big\|\widehat{\Gamma}^{J_2}\partial \psi \big\|\big\|\langle s -r \rangle^{\frac 12 + \delta}  \widehat{\Gamma}^{J_1^1} \psi\big\|_{L^{\infty}} \|\widehat{\Gamma}^{J_1^2} \psi\big\|_{L^{\infty}} \\
   	     	 & \lesssim C_1^3 \epsilon^{\frac 32} \langle s \rangle^{-1+2\delta}\bigg \| \frac{[\widehat{\Gamma}^J \partial \psi]_{-} }{\langle s-r\rangle^{\frac 12 +\delta}} \bigg \|.
   	     \end{align*}
   	     Similarly, we bound $Q_1^2$ as 
   	     \begin{align*}
   	     	Q_1^2 \lesssim C_1^3 \epsilon^{\frac 32} \langle s \rangle^{-1+2\delta}\sum_{|J_2|\leq N-1} \bigg \| \frac{[\widehat{\Gamma}^{J_2} \partial \psi]_{-} }{\langle s-r\rangle^{\frac 12 +\delta}} \bigg \|.
   	     \end{align*}
   	     Gathering the estimates of $Q_1^1$ and $Q_1^2$, one can find 
   	     \begin{align*}
   	     	Q_1 \lesssim  C_1^3 \epsilon^{\frac 32} \langle s \rangle^{-1+2\delta}\sum_{|J_2|\leq N-1} \bigg \| \frac{[\widehat{\Gamma}^{J_2} \partial \psi]_{-} }{\langle s-r\rangle^{\frac 12 +\delta}} \bigg \|.
   	     \end{align*}
   	     Now we consider the estimate of $Q_2$. We have 
   	     \begin{align*}
   	     	Q_2  & \lesssim  \sum_{\substack{|J_1^1|\leq N-1, |J_2|\leq N-4\\ |J_1^2| \leq N-4}}   \big\| \big|\big[\widehat{\Gamma}^J \partial \psi \big]_{-}\big| \big|\widehat{\Gamma}^{J_2}\partial \psi \big| \big|\widehat{\Gamma}^{J_1^1} \psi \big| \big| \widehat{\Gamma}^{J_1^2}\psi \big|  \big\|_{L^1} \\
   	     	&\qquad +  \sum_{\substack{|J_1^1|\leq N-1, |J_2|\leq N-4\\ |J_1^2| \leq N-4}}   \big\| \big|\widehat{\Gamma}^J \partial \psi \big| \big|\big[\widehat{\Gamma}^{J_2}\partial \psi \big]_{-}\big| \big|\widehat{\Gamma}^{J_1^1} \psi \big| \big| \widehat{\Gamma}^{J_1^2}\psi \big|  \big\|_{L^1}:= Q_2^1+Q_2^2.
   	     \end{align*}
   	     Concerning the bound of $Q_2^1$, one can see 
   	     \begin{align*}
   	     	Q_2^1 & \lesssim  \sum_{\substack{|J_1^1|\leq N-1, |J_2|\leq N-4\\ |J_1^2| \leq N-4}}   \bigg\| \frac{\big[\widehat{\Gamma}^J \partial \psi \big]_{-} }{\langle s-r\rangle^{\frac 12 + \delta}}\bigg\|  \big\| \langle s-r\rangle^{\frac 12 + \delta}\widehat{\Gamma}^{J_2}\partial \psi \big\|_{L^{\infty}} \big\|{\widehat\Gamma}^{J_1^1} \psi \big\| \big\| {\widehat\Gamma}^{J_1^2}\psi   \big\|_{L^{\infty}} \\
   	     	  & \lesssim C_1^3 \epsilon^{\frac 32} \langle s \rangle^{-1+2\delta}\bigg \| \frac{[\widehat{\Gamma}^J \partial \psi]_{-} }{\langle s-r\rangle^{\frac 12 +\delta}} \bigg \|,
   	     \end{align*}
   	    in which Propositions \ref{prop:L2-2d}, \ref{prop:pointwise-2d} are used.  Regarding $Q_2^2$,  one can have
   	       	    \begin{align*}
   	    	Q_2^2 & \lesssim \sum_{\substack{|J_1^1|\leq N-1, |J_2|\leq N-4\\ |J_1^2| \leq N-4}}   \big\| \widehat{\Gamma}^J \partial \psi  \big\| \big\|\big[\widehat{\Gamma}^{J_2}\partial \psi \big]_{-}\big\|_{L^{\infty}} \big\|{\Gamma}^{J_1^1} \psi \big\| \big\| {\Gamma}^{J_1^2}\psi   \big\|_{L^{\infty}}  \\
   	    	    & \lesssim C_1^6 \epsilon^{\frac 52} \langle s \rangle^{-\frac 32 + 2\delta},
   	    \end{align*} 
   	    where Proposition \ref{prop:L2-2d} and Lemmas \ref{lem:smdecaypsi}, \ref{lem:pointghost} are used.
   	     Following from the bounds of $Q_2^1$ and $Q_2^2$, we see 
   	     \begin{align*}
   	     	Q_2 \lesssim C_1^3 \epsilon^{\frac 32} \langle s \rangle^{-1+2\delta}\bigg \| \frac{[\widehat{\Gamma}^J \partial \psi]_{-} }{\langle s-r\rangle^{\frac 12 +\delta}} \bigg \| + C_1^6 \epsilon^{\frac 52} \langle s \rangle^{-\frac 32 + 2\delta}.
   	     \end{align*}
   	     Now collecting  the estimates of $R_1$ and $R_2$, we obtain 
   	     \begin{align}
   	     &  \sum_{|J_1|+ |J_2|\leq |J|}   \big\| \big(\widehat{\Gamma}^J \partial \psi^* \gamma^0 \widehat{\Gamma}^{J_2} \psi \big) {\Gamma}^{J_1} \partial (\psi^* H \psi)   \big\|_{L^1}  \nonumber \\
   	     	&  \lesssim C_1^3 \epsilon^{\frac 32} \langle s \rangle^{-1+2\delta}\bigg \| \frac{[\widehat{\Gamma}^J \partial \psi]_{-} }{\langle s-r\rangle^{\frac 12 +\delta}} \bigg \| + C_1^4 \epsilon^{\frac 94} \langle s \rangle^{-\frac 54 + \delta} \nonumber\\
   	     	& \qquad \qquad  + C_1^3 \epsilon^{\frac 52} \langle s \rangle^{-1+2\delta} \sum_{|J_2|\leq N-1}\bigg\|\frac{ [\widehat{\Gamma}^{J_2}\psi]_{-}}{\langle s-r\rangle^{\frac 12 +\delta}} \bigg\|  \label{eq:mediuesti1}.
   	     \end{align}
   	     Gathering the bounds of $Q_1$ and $Q_2$, one can get 
   	     \begin{align}
   	     	& \sum_{|J_1|+ |J_2|\leq |J|}   \big\| \big(\widehat{\Gamma}^J \partial \psi^* \gamma^0 \widehat{\Gamma}^{J_2}\partial \psi \big) {\Gamma}^{J_1} (\psi^* H \psi)   \big\|_{L^1} 	\nonumber \\
   	     	& \lesssim  C_1^3 \epsilon^{\frac 32} \langle s \rangle^{-1+2\delta}\sum_{|J_2|\leq N-1} \bigg \| \frac{[\widehat{\Gamma}^{J_2} \partial \psi]_{-} }{\langle s-r\rangle^{\frac 12 +\delta}} \bigg \| + C_1^6 \epsilon^{\frac 52} \langle s \rangle^{-\frac 32 + 2\delta}. \label{eq:mediuesti2}
   	     \end{align}
   	     Finally, we insert the estimates \eqref{eq:mediuesti1} and \eqref{eq:mediuesti2} into \eqref{eq:derivaener}, and apply H\"older inequality, Proposition \ref{prop:L2-2d} to  obtain 
   	     \begin{align*}
   	     	E(\widehat{\Gamma}^J \partial \psi, t ) \lesssim \epsilon^2 + C_1^4 \epsilon^{\frac 52} + C_1^4 \epsilon^{\frac 94} + C_1^6 \epsilon^{\frac 52} \lesssim  \epsilon^2 +  C_1^6 \epsilon^{\frac 94}.
   	     \end{align*}
	This immediately yields \eqref{eq:2d-rough2}. The proof is completed.
	\end{proof}
	
	\begin{proof}[Proof of Proposition \ref{prop:improved-2d} and global existence] % Theorem \ref{thm:existence}]
	Relying on  Proposition \ref{prop:improved-2d-rough}, we deduce the estimates in Proposition \ref{prop:improved-2d} by letting $C_1$ sufficiently large and $\epsilon$ very small such that $ C_1^2 \epsilon^{1/8} \ll {1/4}$.
	
	Since the energy is continuous in time, the estimates in Proposition \ref{prop:improved-2d} imply that there exists $\delta_1>0$ such that for all $t\in [t_0, T^*+\delta_1]$, it holds
	\begin{align*}
E(\widehat{\Gamma}^I \psi, t )^{1/2} &\leq C_1,
\qquad |I|\leq N,
\\
 E(\widehat{\Gamma}^J \partial \psi, t )^{1/2} &\leq C_1 \epsilon,
\qquad |J|\leq N-1.
	\end{align*}
	We know this contradicts to the definition of $T^*$ in \eqref{eq:T-2d}. Consequently $T^*=+\infty$, and thus the solution to the Cauchy problem \eqref{eq:Soler}-\eqref{eq:ID} exists globally as long as $\epsilon$ very small. 
	The pointwise estiamtes in \eqref{eq:thm-2ddecay} follows from \eqref{eq:point-2d-1}.
	\end{proof}

	\subsection{Scattering in 2D}
	To obtain scattering result for 2D cubic Dirac equation, we impose more restrictions on the nonlinear term so as to obtain more decay rate. For this purpose, we shall require 
	\begin{align*}
		H=\gamma^0,  \qquad  F=I.
	\end{align*}
	Note that  global existence of solution has been established in the above section for such a choice of $H$ and $F$. In the remaining part, we further obtain that the global solution $\psi$ scatters linearly in $H^N(\R^2)$, and finish the proof of Theorem \ref{thm:existence2d}. 
	The argument resembles that in the proof of 3D Dirac equation  with quadratic nonlinearity.
	
	\begin{proposition}
	The solution $\psi$ in Theorem \ref{thm:existence2d} scatters linearly in 2D. More precisely, there exist $\psi^+ \in H^N$ and constants $\widehat{C}>0, 0<\delta \ll 1/4$ such that 
\begin{align*}
\|\psi(t) - S(t-t_0) \psi^+\|_{H^N}
\leq
\widehat{C} \langle t\rangle^{-1/2+\delta}.
\end{align*}
	\end{proposition}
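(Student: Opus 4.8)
The plan is to follow the scattering argument used for the 3D quadratic equation, now in 2D and with $H=\gamma^0$, $F=I$, so that the nonlinearity in \eqref{eq:Soler} is $(\psi^*\gamma^0\psi)\psi$. Since global existence has already been established, the bootstrap bounds \eqref{eq:BA2d} — and hence Propositions \ref{prop:L2-2d}, \ref{prop:pointwise-2d} and Lemmas \ref{lem:smdecaypsi}, \ref{lem:pointghost} — hold for all $t\geq t_0$. By Lemma \ref{lem:scatter} with $s=N$ (note that \eqref{eq:thm-2Ddata} gives $\psi_0\in H^N(\R^2)$), it suffices to prove
\[
\int_{t}^{+\infty} \big\| (\psi^*\gamma^0\psi)\psi \big\|_{H^N(\R^2)} \, d\tau \;\lesssim\; C_1^3 \epsilon^{1/2}\,\langle t\rangle^{-1/2+2\delta},
\]
and then running the whole scheme with $\delta$ replaced by $\delta/2$ yields the stated rate $\langle t\rangle^{-1/2+\delta}$, with $\widehat C$ depending on $C_1$ and $\epsilon$.

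First I would expand the nonlinearity: by the structure-preserving identity (Lemma \ref{lem:preserve}), $\widehat{\Gamma}^I\big((\psi^*\gamma^0\psi)\psi\big)$ with $|I|\leq N$ is a sum of terms $\Gamma^{J_1}(\psi^*\gamma^0\psi)\,\widehat{\Gamma}^{J_2}\psi$, $|J_1|+|J_2|\leq N$, and the null decomposition (Lemma \ref{lem:decompose}) rewrites $\Gamma^{J_1}(\psi^*\gamma^0\psi)$ so that each resulting term is controlled by $\big|[\widehat{\Gamma}^{K_1}\psi]_-\big|\,\big|\widehat{\Gamma}^{K_2}\psi\big|\,\big|\widehat{\Gamma}^{K_3}\psi\big|$ with $|K_1|+|K_2|+|K_3|\leq N$ and the $[\,\cdot\,]_-$ sitting on one of the factors. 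Since $N\geq 7$, at most one of $|K_1|,|K_2|,|K_3|$ exceeds $N-3$; I split accordingly into the case where the top-order factor carries the $[\,\cdot\,]_-$ (Case A) and the case where it does not (Case B).

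In Case A I keep $[\widehat{\Gamma}^{K_1}\psi]_-/\langle\tau-r\rangle^{1/2+\delta}$ in $L^2_x$, estimate one low-order factor in $L^\infty_x$ using Proposition \ref{prop:pointwise-2d} after absorbing the weight $\langle\tau-r\rangle^{1/2+\delta}$ (which costs $C_1\langle\tau\rangle^{-1/2+\delta}$), and the remaining low-order factor by Lemma \ref{lem:smdecaypsi} (which costs $C_1\epsilon^{1/2}\langle\tau\rangle^{-1/2+\delta}$); this bounds the corresponding piece of $\|(\psi^*\gamma^0\psi)\psi\|_{H^N}$ by $C_1^2\epsilon^{1/2}\langle\tau\rangle^{-1+2\delta}\big\|[\widehat{\Gamma}^{K_1}\psi]_-/\langle\tau-r\rangle^{1/2+\delta}\big\|$, and Cauchy–Schwarz in $\tau$ combined with the spacetime bound of Proposition \ref{prop:L2-2d} turns $\int_t^{+\infty}$ of this into $C_1^3\epsilon^{1/2}\langle t\rangle^{-1/2+2\delta}$. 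In Case B the top-order factor stays in $L^2_x$ (bounded by $C_1$ via Proposition \ref{prop:L2-2d}), the $[\,\cdot\,]_-$ factor is of low order and is bounded in $L^\infty_x$ by $C_1\langle\tau\rangle^{-1+\delta}$ using Lemma \ref{lem:pointghost}, and the last low-order factor by $C_1\epsilon^{1/2}\langle\tau\rangle^{-1/2+\delta}$ using Lemma \ref{lem:smdecaypsi}; this piece is then $\lesssim C_1^3\epsilon^{1/2}\langle\tau\rangle^{-3/2+2\delta}$, directly integrable in $\tau$ with the same final rate. Summing the two cases and feeding the result into Lemma \ref{lem:scatter} produces $\psi^+\in H^N$ with $\|\psi(t)-S(t-t_0)\psi^+\|_{H^N}\lesssim\int_t^{+\infty}\|(\psi^*\gamma^0\psi)\psi\|_{H^N}\,d\tau\lesssim C_1^3\epsilon^{1/2}\langle t\rangle^{-1/2+2\delta}$.

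The main obstacle is the slow decay in 2D: the free Dirac evolution decays only like $\langle t+r\rangle^{-1/2}$, so a crude estimate of the cubic nonlinearity yields merely $\|(\psi^*\gamma^0\psi)\psi\|_{L^2_x}\lesssim\langle\tau\rangle^{-1}$, which fails to be time-integrable. Closing the estimate therefore forces one to genuinely exploit (i) the $\epsilon^{1/2}$ smallness in the pointwise bound for low-order $\widehat{\Gamma}^I\psi$ (Lemma \ref{lem:smdecaypsi}), together with (ii) for the top-order contribution, either the ghost-weight spacetime integral of $[\widehat{\Gamma}^I\psi]_-$ (Case A) or the improved $\langle\tau\rangle^{-1+\delta}$ time decay of $[\psi]_-$ on the low-order factors (Case B), so as to push the integrand strictly below $\langle\tau\rangle^{-1}$. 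The only remaining care is the combinatorial bookkeeping that at most one factor can sit at order $N$ — hence only in $L^2$, never in $L^\infty$ — which is precisely where the hypothesis $N\geq 7$ enters.
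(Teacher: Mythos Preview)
Your proof is correct and follows the same approach as the paper: reduce to Lemma \ref{lem:scatter}, use the null decomposition of Lemma \ref{lem:decompose} to place a $[\,\cdot\,]_-$ on one factor of the cubic, and split according to whether the top-order derivative lands on the $[\,\cdot\,]_-$ factor (handled via the ghost-weight spacetime bound and Cauchy--Schwarz in time) or on a plain factor (handled via the improved pointwise decay \eqref{eq:psi--point} of Lemma \ref{lem:pointghost}).

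The only minor deviation is that the paper does not invoke Lemma \ref{lem:smdecaypsi} here: in both cases it bounds the remaining low-order factor by the plain Klainerman--Sobolev estimate \eqref{eq:point-2d-1}, obtaining $C_1^2\langle\tau\rangle^{-1+\delta}$ and $C_1^3\langle\tau\rangle^{-3/2+\delta}$ directly (no $\epsilon^{1/2}$, and only a single $\delta$ in the exponent, so no renaming $\delta\mapsto\delta/2$ is needed). Your choice to use Lemma \ref{lem:smdecaypsi} instead buys an extra $\epsilon^{1/2}$ at the cost of a second $\delta$; this is harmless and gives a slightly sharper constant, but is not required for the argument to close.
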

\begin{proof}
By Lemma \ref{lem:scatter}, It suffices to bound
$$
\int_{t_0}^{+\infty} \| (\psi^* \gamma^0 \psi)\psi \|_{H^N} \, d\tau.
$$
We note by Lemma \ref{lem:decompose} that
\begin{align*}
&\| (\psi^* \gamma^0 \psi)\psi \|_{H^N}
\\
\lesssim
&\sum_{|I_1|+|I_2|+|I_3|\leq N} \Big\| \big|[\nabla^{I_1} \psi]_-\big|\,    \big|\nabla^{I_2} \psi\big| \,   \big|\nabla^{I_3} \psi\big| \Big\| 
\\
\lesssim
&\sum_{\substack{|I_1|\leq N, \\|I_2|+|I_3|\leq N-3}} \bigg\| {[\nabla^{I_1} \psi]_- \over \langle\tau-r\rangle^{1/2+\delta}} \bigg\|\,    \big\|\langle\tau-r\rangle^{1/2+\delta} \nabla^{I_2} \psi\big\|_{L^\infty} \,   \big\|\nabla^{I_3} \psi \big\|_{L^\infty} 
\\
+
&\sum_{\substack{|I_1|+|I_2|\leq N-3,\\|I_3|\leq N}} \big\|[\nabla^{I_1} \psi]_-\big\|_{L^\infty}\,    \big\|\nabla^{I_2} \psi\big\|_{L^\infty} \,   \big\|\nabla^{I_3} \psi \big\| .
\end{align*}
We employ the estimates in Propositions \ref{prop:L2-2d}, \ref{prop:pointwise-2d}, and \eqref{eq:psi--point} to get
\begin{align*}
&\| (\psi^* \gamma^0 \psi)\psi \|_{H^N}
\lesssim
C_1^2 \sum_{\substack{|I_1|\leq N}} \langle \tau\rangle^{-1+\delta} \bigg\| {[\nabla^{I_1} \psi]_- \over \langle\tau-r\rangle^{1/2+\delta}} \bigg\| 
+
C_1^3 \langle \tau\rangle^{-3/2+\delta}.
\end{align*}
Thus we derive that
\begin{align*}
&\int_{t}^{+\infty} \| (\psi^* \gamma^0 \psi)\psi \|_{H^N} \, d\tau
\\
\lesssim
&C_1^3 \langle t\rangle^{-1/2+\delta} 
+ C_1^2 \Big(\int_{t}^{\infty} \langle \tau\rangle^{-2+2\delta} \, d\tau\Big)^{1/2} \sum_{\substack{|I_1|\leq N}} \Big( \int_{t}^\infty\bigg\| {\big|[\nabla^{I_1} \psi]_-\big| \over \langle\tau-r\rangle^{1/2+\delta}} \bigg\|^2 \, d\tau    \Big)^{1/2}
\\
\lesssim
&C_1^3 \langle t\rangle^{-1/2+\delta} .
\end{align*}

The proof is done.
\end{proof}

\section{Brief proof of Theorem \ref{thm:neat}}	\label{sec:neat}
	
	In this section, we verify Theorem \ref{thm:neat}. We first illustrate a proposition on the smallness of the initial data.
	
	\begin{proposition}\label{prop:ID-small}
	If the bounds in \eqref{eq:thm-ID-23} hold, then we have
	\begin{align}
	\| \langle x\rangle^{|J|} \nabla \nabla^J \psi_0\| 
	\lesssim
	\epsilon^\theta,
	\qquad
	|J|\leq N-3,
	\end{align}
	in which $\theta>0$ is a small number depending on $N$.
	\end{proposition}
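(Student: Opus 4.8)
The plan is to read off from \eqref{eq:thm-ID-23} the three facts $\|\psi_0\|\lesssim 1$, $\|\langle x\rangle^{|I|}\nabla^I\psi_0\|\lesssim 1$ for all $|I|\le N$, and $\|\nabla^N\psi_0\|\lesssim\epsilon$, and then to propagate the smallness of the top-order \emph{unweighted} norm down to the intermediate \emph{weighted} norms by combining a plain $L^2$ Gagliardo--Nirenberg interpolation with the weighted Hölder interpolation of Proposition \ref{eq:holder}. Write $k=|J|+1$, so that $1\le k\le N-2$ and $\|\langle x\rangle^{|J|}\nabla\nabla^J\psi_0\|$ equals, up to a constant depending only on $N$, a norm of the form $\|\langle x\rangle^{k-1}\nabla^k\psi_0\|$; the target is thus to show $\|\langle x\rangle^{k-1}\nabla^k\psi_0\|\lesssim\epsilon^{1/N}$ for $1\le k\le N-2$, which yields the proposition with $\theta=1/N$.

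\textbf{Step 1 (descend from the top order).} A standard $L^2$ Gagliardo--Nirenberg inequality (immediate from Plancherel and Hölder in the Fourier variable, splitting $|\xi|^{2k}|\widehat{\psi_0}|^2=(|\widehat{\psi_0}|^2)^{1-k/N}(|\xi|^{2N}|\widehat{\psi_0}|^2)^{k/N}$) gives, for $0\le k\le N$,
\[
\|\nabla^k\psi_0\|\lesssim\|\psi_0\|^{1-k/N}\,\|\nabla^N\psi_0\|^{k/N}\lesssim\epsilon^{k/N},
\]
where we used $\|\psi_0\|\lesssim 1$ and $\|\nabla^N\psi_0\|\lesssim\epsilon$ from \eqref{eq:thm-ID-23}.

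\textbf{Step 2 (insert the weight).} For $2\le k\le N-2$, apply Proposition \ref{eq:holder} with $p=2$ and $\alpha=1/k$ to the functions $\nabla^k\psi_0$ and $\langle x\rangle^k\nabla^k\psi_0$; since $|\nabla^k\psi_0|^{1/k}\bigl(\langle x\rangle^k|\nabla^k\psi_0|\bigr)^{1-1/k}=\langle x\rangle^{k-1}|\nabla^k\psi_0|$, this gives
\[
\|\langle x\rangle^{k-1}\nabla^k\psi_0\|\le\|\nabla^k\psi_0\|^{1/k}\,\|\langle x\rangle^k\nabla^k\psi_0\|^{(k-1)/k}\lesssim\bigl(\epsilon^{k/N}\bigr)^{1/k}\cdot 1=\epsilon^{1/N},
\]
where in the last step we used Step 1 together with $\|\langle x\rangle^k\nabla^k\psi_0\|\lesssim 1$ (the $|I|=k$ term of \eqref{eq:thm-ID-23}, legitimate because $k\le N$). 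For $k=1$ the bound $\|\nabla\psi_0\|\lesssim\epsilon^{1/N}$ is already contained in Step 1. This completes the proof with $\theta=1/N$ (or any smaller positive number).

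\textbf{The point to watch} is the pairing of weights and derivatives in Step 2: one must interpolate the desired norm between $\|\langle x\rangle^k\nabla^k\psi_0\|$ (weight equal to the differentiation order, hence bounded by \eqref{eq:thm-ID-23}) and the unweighted $\|\nabla^k\psi_0\|$ (small by Step 1). It would be tempting, but incorrect, to seek smallness by interpolating against a high-weight norm such as $\|\langle x\rangle^N\nabla^k\psi_0\|$ with $k<N$: such norms are \emph{not} controlled by \eqref{eq:thm-ID-23}, and indeed for the model data $\psi_0\sim\phi_\epsilon$ with $\phi_\epsilon(x)=\epsilon^{d/2}\phi(\epsilon x)$ one has $\|\langle x\rangle^N\nabla^k\phi_\epsilon\|\sim\epsilon^{k-N}\to\infty$. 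Once this bookkeeping is arranged correctly, both steps are routine.
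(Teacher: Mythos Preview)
Your proof is correct and takes a genuinely different route from the paper. The paper argues via pointwise bounds: from Gagliardo--Nirenberg and a Sobolev embedding it obtains $\|\nabla^I\psi_0\|_{L^\infty}\lesssim\epsilon^{(1+|I|)/N}$, and from the Klainerman--Sobolev inequality (Proposition~\ref{prop:K-S}) applied to the weighted $L^2$ hypotheses it obtains $|\nabla^I\psi_0|\lesssim\langle r\rangle^{-(|I|+1)}$; interpolating these two pointwise estimates gives $|\nabla^I\psi_0|\lesssim\epsilon^\theta\langle r\rangle^{-(|I|+1/2)}$, which is then integrated against the weight $\langle x\rangle^{|J|}$. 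Your argument stays entirely at the $L^2$ level, interpolating $\|\langle x\rangle^{k-1}\nabla^k\psi_0\|$ directly between the small unweighted norm and the bounded full-weight norm via Proposition~\ref{eq:holder}. This is more elementary (no Sobolev embedding, no Klainerman--Sobolev, no pointwise work), dimension-independent, yields the explicit exponent $\theta=1/N$, and in fact covers the full range $|J|\le N-1$ rather than only $|J|\le N-3$. The paper's approach, by contrast, produces the pointwise smallness $|\nabla^I\psi_0|\lesssim\epsilon^\theta\langle r\rangle^{-(|I|+1/2)}$ as a byproduct, which may be of independent use.
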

	
	\begin{proof}
	The proofs for the 3D case and the 2D case are similar, but the 2D case is more subtle. So we only present the proof for the 2D case.
	
	First, by interpolation inequality we have
	\begin{align*}
		\|\nabla^J \psi_0\| \lesssim \|\psi_0\|^{1-\frac JN} \|\nabla^N \psi_0\|^{\frac JN}, \ \ 0\leq J \leq N,
	\end{align*}
	and 
	\begin{align} \label{eq:pnt-01}
	\|\nabla^I \psi_0\|_{L^{\infty}} \lesssim \|\nabla^I \psi_0\|^{\frac 12}  \|\nabla^2 \nabla^I \psi_0\|^{\frac 12}
	\lesssim \epsilon^{\frac{1+|I|}{N}}, \qquad |I|\leq N-2.
	\end{align}	
	On the other hand, we apply Klainerman-Sobolev inequality in Proposition \ref{prop:K-S} on \eqref{eq:thm-ID-23} to get
	\begin{align}\label{eq:pnt-02}
	|\nabla^I \psi_0| \lesssim \langle r\rangle^{-(|I|+1)}, \qquad |I|\leq N-2.
	\end{align}
	
	Interpolating \eqref{eq:pnt-01} and \eqref{eq:pnt-02}, we get
	\begin{align*}
		|\nabla^I \psi_0| \lesssim \epsilon^\theta \langle r\rangle^{-(|I|+1/2)}, \qquad |I|\leq N-2,
	\end{align*}
	with $\theta > 0$ a small number depending on $N$.
	Consequently, we obtain
	\begin{align*}
	\| \langle x\rangle^{|J|} \nabla \nabla^J \psi_0 \| \lesssim \epsilon^\theta, \qquad |J|\leq N-3.
	\end{align*}
	
	The proof is complete.
	\end{proof}

By the smallness of the initial data shown in Proposition \ref{prop:ID-small}, we see 	\eqref{eq:thm-ID-23} actually implies
\begin{align}\label{eq:small-01}
\sum_{0\leq |I| \leq N} \| \langle x \rangle^{|I|} \nabla^{I} \psi_0 \| < 20,
\\
\sum_{0\leq |J| \leq N-3} \| \langle x \rangle^{|J|} \nabla \nabla^J \psi_0 \| < \epsilon.
\end{align}
	This is not exactly the same as \eqref{eq:thm-3Ddata} or \eqref{eq:thm-2Ddata} due to the different regularity range, but the arguments conducted to show Theorems \ref{thm:existence}-\ref{thm:existence2d} also apply here with small modifications on the regularity range. Thus Theorem \ref{thm:neat} is verified.

%\section*{Acknowledgment}
%The authors would like to thank

	\section*{Data Availability}
The data are available upon request.

\end{document}